  \theoremstyle{plain}
    \newtheorem{thm}{Theorem}[section]
    \newtheorem{prop}[thm]{Proposition}
    \newtheorem{subsec}[thm]{}
\theoremstyle{definition}
    \newtheorem{defn}[thm]{Definition}
        \newtheorem{remark}[thm]{Remark}
    \newtheorem{exam}[thm]{Example}
\theoremstyle{remark}
\newcommand{\id}{\operatorname{id}}
\title{}
\author{}
\date{}
\begin{document}
\title{Twisted Rota-Baxter families and NS-family algebras\footnote{2020 {\em Mathematics Subject Classification.} 16T25, 17B38, 16W99, 17A99.} \footnote{{\em Keywords.} (Twisted) $\mathcal{O}$-operator family, Dendriform (family) algebra, Associative Yang-Baxter family, NS-(family) algebra, Cohomology and Deformation. }}

\author{Apurba Das\footnote{Indian Institute of Technology, Kharagpur 721302, West Bengal, India. Email: apurbadas348@gmail.com}
}






\maketitle

\noindent

\thispagestyle{empty}

{\bf Abstract.} Family algebraic structures indexed by a semigroup first appeared in the algebraic aspects of renormalizations in quantum field theory. The concept of the Rota-Baxter family and its relation with (tri)dendriform family algebras have been recently discovered. In this paper, we first consider a notion of $\mathcal{O}$-operator family as a generalization of the Rota-Baxter family and define two variations of associative Yang-Baxter family that produce $\mathcal{O}$-operator families. Given a Hochschild $2$-cocycle on the underlying algebra, we also define a notion of twisted $\mathcal{O}$-operator family (in particular twisted Rota-Baxter family). We also introduce and study NS-family algebras as the underlying structure of twisted $\mathcal{O}$-operator families. 
Finally, we define suitable cohomology of twisted $\mathcal{O}$-operator families and NS-family algebras (in particular cohomology of Rota-Baxter families and dendriform family algebras) that govern their deformations.



\tableofcontents

\vspace{0.2cm}

\section{Introduction}

\subsection{Rota-Baxter operators, (twisted) $\mathcal{O}$-operators and dendriform algebras} Rota-Baxter operators (often called Baxter operators) are the algebraic abstraction of the integral operator. They first appeared in the work of G. Baxter in the fluctuation theory of probability \cite{B60}. Subsequently, such operators were studied by G.-C. Rota, P. Cartier and F. V. Atkinson, among others \cite{rota,cartier,atkinson}. In the last twenty years, Rota-Baxter operators pay much attention due to their connections with combinatorics, shuffle algebras, infinitesimal bialgebras, splitting of algebras and the algebraic formulations of the renormalization in quantum field theory \cite{guo-keigher,aguiar,aguiar-infhopf,aguiar-inf,BBGN13,PBG17,CK00}. In \cite{loday} J.-L. Loday introduced a notion of dendriform algebra as a triple $(D, \prec, \succ)$ consisting of a vector space $D$ together with bilinear maps $\prec, \succ : D \otimes D \rightarrow D$ satisfying
\begin{align*}
(x \prec y) \prec z =~& x \prec (y \prec z + y \succ z), \\
(x \succ y) \prec z =~& x \succ (y \prec z), \\
(x \prec y + x \succ y) \succ z =~& x \succ (y \succ z), ~ \text{ for } x, y, z \in D.
\end{align*} 
Aguiar first finds the connection between the Rota-Baxter operator and dendriform algebra structures \cite{aguiar}. Later, {K. Uchino} introduced a notion of $\mathcal{O}$-operator as a relative version of Rota-Baxter operator and noncommutative analogue of Poisson structures \cite{U08}. Let $A$ be an associative algebra and $M$ be an $A$-bimodule. An $\mathcal{O}$-operator on $M$ over the algebra $A$ (simply called an $\mathcal{O}$-operator) is a linear map $T: M \rightarrow A$ satisfying
\begin{align}\label{o-optr}
T(u) \cdot T(v) = T \big( T(u) \cdot v + u \cdot T(v)  \big), \text{ for } u, v \in M.
\end{align}
Here ~$\cdot$~ represents the multiplication on $A$ as well as left and right $A$-actions on $M$. A Rota-Baxter operator on the algebra $A$ is simply an $\mathcal{O}$-operator when $M = A$ the adjoint $A$-bimodule. A solution of associative Yang-Baxter equation gives rise to an $\mathcal{O}$-operator. The result of Aguiar is also generalized to $\mathcal{O}$-operators. Namely, an $\mathcal{O}$-operator $T: M \rightarrow A$ induces a dendriform algebra structure on $M$ with the structure operations $u \prec v = u \cdot T(v)$ and $u \succ v = T(u) \cdot v$, for $u, v \in M$. See \cite{eb,guo-book} for more results on $\mathcal{O}$-operators and dendriform algebras.


In \cite{sev-wein} P. \v{S}evera and A. Weinstein first considered twisted Poisson structures in the study of Dirac geometry in suitable Courant algebroids. While $\mathcal{O}$-operators are the noncommutative analogue of Poisson structures, twisted $\mathcal{O}$-operators (first introduced by K. Uchino \cite{U08}) are the noncommutative analogue of twisted Poisson structures. Let $A$ be an associative algebra, $M$ be an $A$-bimodule and $H$ be a Hochschild $2$-cocycle on $A$ with values in $M$. In other words, $H: A^{\otimes 2} \rightarrow M$ is a bilinear map satisfying $\delta_\mathrm{Hoch} H = 0$, equivalently,
\begin{align}\label{2-co-id}
a \cdot H (b, c) - H ( a \cdot b, c)+ H (a, b \cdot c) - H (a, b) \cdot c =0, \text{ for } a, b, c \in A.
\end{align}
A linear map $T : M \rightarrow A$ is called an $H$-twisted $\mathcal{O}$-operator if for all $u, v \in M$,
\begin{align*}
T(u) \cdot T(v) = T \big( T(u) \cdot v + u \cdot T (v) + H (T(u), T(v))   \big), \text{ for } u, v \in M.
\end{align*}
On the other hand, NS-algebras was first introduced by P. Leroux in the study of Nijenhuis operators and dendriform-Nijenhuis bialgebras. Uchino observed that $H$-twisted $\mathcal{O}$-operators give rise to NS-algebras \cite{U08}. This generalizes the construction of dendriform algebras from $\mathcal{O}$-operators. See \cite{A3,An,ns-guo,ospel} for more details on NS-algebras.

\subsection{Rota-Baxter family and dendriform family algebras}
Algebraic structures often appear in `family versions', in which the usual operations are replaced by a family of operations indexed by a semigroup $\Omega$. The family version of algebraic structures first appeared in the work of K. Ebrahimi-Fard, J. Gracia-Bondia and F. Patras in the algebraic formulations of renormalization in quantum field theory \cite{fard-patras} (see also \cite{kre}). The notion of the Rota-Baxter family was introduced in \cite{fard-patras} and further studied by L. Foissy, X. Gao, L. Guo, D. Manchon and Y.Zhang \cite{foissy,guo09,zhang-free,zhang}. Let $A$ be an associative algebra. A Rota-Baxter family on $A$ is a collection $\{ R_\alpha : A \rightarrow A \}_{\alpha \in \Omega}$ of linear maps indexed by $\Omega$, satisfying
\begin{align*}
R_\alpha (a) \cdot R_\beta (b) = R_{\alpha \beta} \big(  R_\alpha (a) \cdot b + a \cdot R_\beta (b) \big), ~ \text{ for } a, b \in A.
\end{align*}
A Rota-Baxter family naturally induces a dendriform family algebra (family version of dendriform structure). It has been shown in \cite{zhang} that a Rota-Baxter family induces a Rota-Baxter operator on the tensor product with the semigroup algebra. The same phenomenons also hold for dendriform family algebras. See also \cite{wang} for some other results about the Rota-Baxter family. 

\subsection{Twisted $\mathcal{O}$-operator family and NS-family algebras}\label{subsec-o}
Like an $\mathcal{O}$-operator is a relative version of the Rota-Baxter operator, the notion of $\mathcal{O}$-operator family is a relative version of the Rota-Baxter family. There is a close relationship between $\mathcal{O}$-operator families and dendriform family algebras. See Propositions \ref{prop-o-dend} and \ref{prop-functor}. We define two variations of the family analogue of the associative Yang-Baxter equation and called them associative Yang-Baxter family of type-I and type-II, respectively (Definition \ref{aybf-1-2}). An associative Yang-Baxter family of type-I induces a Rota-Baxter family on the underlying algebra. On the other hand, a skew-symmetric associative Yang-Baxter family of type-II induces an $\mathcal{O}$-operator family. See Propositions \ref{aybf-1-prop} and \ref{aybf-2-prop}. Next, we introduce the notion of an $H$-twisted $\mathcal{O}$-operator family and find a characterization in terms of graphs. In particular, we obtain the notion of twisted Rota-Baxter family. A Reynolds family is an example of a twisted $\mathcal{O}$-operator family. We observed that a ($H$-twisted) $\mathcal{O}$-operator family induces an ordinary (twisted) $\mathcal{O}$-operator. We also introduce the notion of NS-family algebras as the family analogue of NS-algebras (Definition \ref{defn-ns-fam}). They generalize dendriform family algebras. An NS-family algebra induces an ordinary NS-algebra on the tensor product with the semigroup algebra (Theorem \ref{ns-fam-ns}). Finally, we show that an $H$-twisted $\mathcal{O}$-operator family induces an NS-family algebra structure (Proposition \ref{twisted-rota-ns}). In particular, the following diagram commutes 
\begin{align}\label{comm-diag}
\xymatrix{
\text{Twisted } \mathcal{O} \text{-operator family} \ar[d] \ar[r] & \text{NS-family algebra} \ar[d] \\
\text{Twisted } \mathcal{O} \text{-operator} \ar[r] & \text{NS-algebra}.
}
\end{align}

\subsection{Cohomology and deformations of family structures}
Formal deformation theory of algebraic structures began with the seminal work of Gerstenhaber for associative algebras \cite{gers}. Such deformation theory is governed by the Hochschild cohomology theory of associative algebras. Later, cohomology and deformation theory has been generalized to various algebraic structures, including Lie algebras and Leibniz algebras \cite{nij-ric,bala}. See \cite{bala1} for cohomology and deformations of algebras over any binary quadratic operads. Recently, cohomology and deformation theory of Rota-Baxter operators, (twisted) $\mathcal{O}$-operators and dendriform algebras have been extensively studied in \cite{Das,A3,A4,TBGS19}. In the present paper, we also define the cohomology theory of (twisted) $\mathcal{O}$-operator family. In particular, we get the cohomology theory associated with an $\mathcal{O}$-operator family and Rota-Baxter family. As an application of our cohomology, we study formal one-parameter deformations of (twisted) $\mathcal{O}$-operator family. On another side, we define the cohomology theory of NS-family algebras. In particular, we obtain the cohomology of dendriform family algebras. Finally, some remarks about the deformations of NS-family algebras are made.

\medskip

\medskip

The paper is organized as follows. In Section \ref{sec-o-op}, we define $\mathcal{O}$-operator family as a generalization of the Rota-Baxter family and introduce associative Yang-Baxter family of type-I and type-II. We define and study the notions of $H$-twisted $\mathcal{O}$-operator family and NS-family algebras in Section \ref{sec-3}. Finally, in Section \ref{sec-5}, we define and study cohomology of (twisted) $\mathcal{O}$-operator families and NS-family algebras. We end this paper with an appendix where some results of the present paper are dualized in the context of coassociative coalgebras.

All vector spaces, algebras, linear and multilinear maps, tensor products are over a field ${\bf k}$ of characteristic $0$. Throughout the paper, $\Omega$ is a semigroup whose elements are usually denoted by $\alpha, \beta, \gamma, \ldots$ . 

\section{$\mathcal{O}$-operator family and associative Yang-Baxter family}\label{sec-o-op}

In this section, we first introduce $\mathcal{O}$-operator family as a relative version of the Rota-Baxter family. We also find the relations between $\mathcal{O}$-operator families and dendriform family algebras. Next, we introduce the associative Yang-Baxter family (of type-I and type-II) that produces $\mathcal{O}$-operator families.

\subsection{$\mathcal{O}$-operator families} 

Let $A$ be an associative algebra. An $A$-bimodule consists of a vector space $M$ together with two bilinear operations $ l: A \otimes M \rightarrow M$, $(a,u) \mapsto a \cdot u$ and $ r: M \otimes A \rightarrow M$, $(u,a) \mapsto u \cdot a$ (called the left and right $A$-actions, respectively) satisfying 
$$ (a \cdot b) \cdot u =  a \cdot (b \cdot u),~~~~(a \cdot u ) \cdot b = a \cdot (u \cdot b),~~~~(u \cdot a) \cdot b = u \cdot (a \cdot b),~~\text{for all}~a,b \in A,~u \in M.$$
An $A$-bimodule as above may be denoted by $(M, l, r)$ or simply by $M$ when no confusion arises. It follows that $A$ is an $A$-bimodule with the left  and right $A$-actions given by the algebra multiplication.

Let $\Omega$ be a fixed semigroup (may not be commutative).

\begin{defn}
An {\bf $\mathcal{O}$-operator family} on $M$ over the algebra $A$ (or simply called an $\mathcal{O}$-operator family) is a collection $\lbrace T_\alpha : M \rightarrow A \rbrace_{\alpha \in \Omega}$ of linear maps satisfying 
$$T_\alpha(u) \cdot T_\beta(v) = T_{\alpha\beta}(T_\alpha(u) \cdot v + u \cdot T_\beta(v)),~~\text{for all}~u,v \in M~\text{and}~\alpha, \beta \in \Omega.$$
\end{defn} 

Let $A'$ be another associative algebra and $M'$ be an $A'$-bimodule. Let $\{ T'_\alpha : M \rightarrow A \}_{\alpha \in \Omega}$ an $\mathcal{O}$-operator family (on $M'$ over the algebra $A'$). A {\bf morphism} of $\mathcal{O}$-operator families from 
$\lbrace T_\alpha : M \rightarrow A \rbrace_{\alpha \in \Omega}$ to $\lbrace T'_\alpha : M' \rightarrow A' \rbrace_{\alpha \in \Omega}$ consists of a pair $(\phi, \psi)$ of an algebra homomorphism $\phi : A \rightarrow A'$ and a linear map $\psi : M \rightarrow M'$ satisfying
\begin{align*}
\psi (a \cdot u) = \phi (a) \cdot' \psi (u), ~~~~ \psi (u \cdot a) = \psi (u) \cdot' \phi (a) ~~ \text{ and } \phi \circ T_\alpha = T'_\alpha \circ \psi, ~\text{for } a \in A, u \in M \text{ and } \alpha \in \Omega.
\end{align*}
We denote the category of $\mathcal{O}$-operator families and morphisms between them by ${\bf Ooperf}$.

\begin{remark}
(1) Any Rota-Baxter family $\{ R_\alpha  : A \rightarrow A \}_{\alpha \in \Omega}$ on the algebra $A$ can be regarded as an $\mathcal{O}$-operator family with the underlying $A$-bimodule given by $A$.

(2) Let $T: M \rightarrow A$ be an $\mathcal{O}$-operator (see Equation (\ref{o-optr})). Then the constant family $\lbrace T_\alpha \rbrace_{\alpha \in \Omega}$, where $T_\alpha =T$ for all $\alpha \in \Omega$, is an $\mathcal{O}$-operator family.
\end{remark}


\begin{defn}\label{defn-der-fam}
Let $A$ be an associative algebra and $M$ be an $A$-bimodule. A collection $\lbrace D_\alpha : A \rightarrow M \rbrace_{\alpha \in \Omega}$ of linear maps is said to be a {\bf derivation family} on $A$ with values in the $A$-bimodule $M$ if they satisfy
\begin{equation}\label{1}
D_{\alpha\beta}(a \cdot b)= D_{\alpha}(a) \cdot b + a \cdot D_{\beta}(b),~\text{for all}~a,b \in A~\text{and}~\alpha, \beta \in \Omega.
\end{equation}

A derivation family $\lbrace D_\alpha \rbrace_{\alpha \in \Omega}$ is said to be invertible if $\dim A= \dim M$ and each $D_\alpha : A \rightarrow M$ is invertible.
\end{defn}

\begin{prop}
Let $\lbrace D_\alpha \rbrace_{\alpha \in \Omega}$ be an invertible derivation family on $A$ with values in the $A$-bimodule $M$. Then the collection $\lbrace D^{-1}_\alpha \rbrace_{\alpha \in \Omega}$ is an $\mathcal{O}$-operator family.
\end{prop}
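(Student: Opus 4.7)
The plan is a direct substitution argument. Given $u, v \in M$ and $\alpha, \beta \in \Omega$, I would introduce the preimages $a := D_\alpha^{-1}(u)$ and $b := D_\beta^{-1}(v)$, which exist uniquely by the invertibility hypothesis. Then $u = D_\alpha(a)$ and $v = D_\beta(b)$.

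Next, I would apply the derivation family identity \eqref{1} to the product $a \cdot b$, obtaining
\[
D_{\alpha\beta}(a \cdot b) \;=\; D_\alpha(a) \cdot b + a \cdot D_\beta(b) \;=\; u \cdot D_\beta^{-1}(v) + D_\alpha^{-1}(u) \cdot v.
\]
Applying the inverse $D_{\alpha\beta}^{-1}$ to both sides yields
\[
a \cdot b \;=\; D_{\alpha\beta}^{-1}\bigl( D_\alpha^{-1}(u) \cdot v + u \cdot D_\beta^{-1}(v) \bigr),
\]
and substituting back $a \cdot b = D_\alpha^{-1}(u) \cdot D_\beta^{-1}(v)$ gives exactly the $\mathcal{O}$-operator family identity. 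Since $u, v, \alpha, \beta$ were arbitrary, this proves the claim.

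There is essentially no obstacle here: the argument is the family analogue of the classical fact that the inverse of an invertible derivation is a Rota-Baxter operator of weight zero, and the substitution $a = D_\alpha^{-1}(u)$, $b = D_\beta^{-1}(v)$ converts the derivation identity on inputs $(a,b)$ directly into the $\mathcal{O}$-operator identity on inputs $(u,v)$. The only thing to be careful about is matching the indices correctly: the left-hand index of the derivation acts on the first variable and the right-hand index on the second, which is consistent with how the $\mathcal{O}$-operator family identity is stated.
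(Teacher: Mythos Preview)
Your proof is correct and is essentially identical to the paper's own argument: both substitute $a = D_\alpha^{-1}(u)$, $b = D_\beta^{-1}(v)$ into the derivation family identity \eqref{1} and then apply $D_{\alpha\beta}^{-1}$ to both sides to obtain the $\mathcal{O}$-operator family identity.
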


\begin{proof}
As $\lbrace D_\alpha \rbrace_{\alpha \in \Omega}$ is an invertible family, we may substitute $a= D_\alpha^{-1}(u)$ and $b=D_\beta ^{-1}(v)$ in the identity (\ref{1}). Then we get $D_{\alpha \beta}  (D_\alpha ^{-1}(u) \cdot D_\beta ^{-1}(v))= u \cdot D_\beta ^{-1}(v) + D_\alpha ^{-1}(u) \cdot v$, which is equivalent to 
\begin{align*}
D_\alpha^{-1}(u) \cdot D_\beta ^{-1}(v) = D_{\alpha \beta} ^{-1} (D_\alpha ^{-1}(u) \cdot v + u \cdot D_\beta ^{-1}(v)).
\end{align*}
This shows that the collection $\lbrace D_\alpha ^{-1} \rbrace_{\alpha \in \Omega}$ is an $\mathcal{O}$-operator family.
\end{proof}

Given an associative algebra $A$ and an $A$-bimodule $M$, the direct sum $A \oplus M$ carries an associative algebra structure with the product given by
\begin{align*}
(a, u) \star (b, v) = (a \cdot b, a \cdot v + u \cdot b),~ \text{ for } (a,u), (b,v) \in A \oplus M.
\end{align*}
This is called the {\bf semidirect product} algebra, denoted by $A \ltimes M$. In the following result, we show that an $\mathcal{O}$-operator family is equivalent to a Rota-Baxter family on the semidirect product.

\begin{prop}\label{lift-rbf}
A collection $\lbrace T_\alpha : M \rightarrow A \rbrace_{\alpha \in \Omega}$ of linear maps is an $\mathcal{O}$-operator family if and only if the collection $\lbrace \widehat{T}_\alpha : A \oplus M \rightarrow A \oplus M \rbrace_{\alpha \in \Omega}$ is a Rota-Baxter family on the semidirect product $A \ltimes M$, where $$\widehat{T}_\alpha(a,u) = (T_\alpha(u),0),~\text{for}~(a,u) \in A \oplus M \text{ and } \alpha \in \Omega.$$
\end{prop}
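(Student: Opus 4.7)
The plan is a direct computation: expand both sides of the Rota-Baxter family equation for $\{\widehat{T}_\alpha\}$ on $A \ltimes M$ and show that the resulting identity reduces precisely to the $\mathcal{O}$-operator family condition for $\{T_\alpha\}$, with no residual content.

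First I would fix $\alpha, \beta \in \Omega$ and arbitrary elements $(a,u), (b,v) \in A \oplus M$, and compute the left-hand side of the purported Rota-Baxter family identity. By definition of $\widehat{T}_\alpha$ and of the semidirect product multiplication $\star$, the product $\widehat{T}_\alpha(a,u) \star \widehat{T}_\beta(b,v) = (T_\alpha(u),0) \star (T_\beta(v),0)$ collapses to $(T_\alpha(u)\cdot T_\beta(v),\, 0)$ since the mixed bimodule terms involve a zero second entry.

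Next I would expand the right-hand side. The two auxiliary products $\widehat{T}_\alpha(a,u)\star (b,v)$ and $(a,u)\star \widehat{T}_\beta(b,v)$ evaluate to $(T_\alpha(u)\cdot b,\, T_\alpha(u)\cdot v)$ and $(a\cdot T_\beta(v),\, u\cdot T_\beta(v))$ respectively, so their sum is
\begin{align*}
\bigl(T_\alpha(u)\cdot b + a\cdot T_\beta(v),\ T_\alpha(u)\cdot v + u\cdot T_\beta(v)\bigr).
\end{align*}
Applying $\widehat{T}_{\alpha\beta}$ discards the first entry and maps the second entry through $T_{\alpha\beta}$, yielding
\begin{align*}
\bigl(T_{\alpha\beta}(T_\alpha(u)\cdot v + u\cdot T_\beta(v)),\ 0\bigr).
\end{align*}

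Finally I would equate the two sides. The second components agree trivially (both are zero), and the first components agree if and only if
\begin{align*}
T_\alpha(u)\cdot T_\beta(v) = T_{\alpha\beta}\bigl(T_\alpha(u)\cdot v + u\cdot T_\beta(v)\bigr),
\end{align*}
which is exactly the defining identity of an $\mathcal{O}$-operator family. Since the Rota-Baxter family identity for $\{\widehat{T}_\alpha\}$ must hold for \emph{all} $(a,u),(b,v)$, and since the output of $\widehat{T}_\alpha$ does not depend on the $A$-component of its input, no additional condition arises. There is no real obstacle here; the only point to watch is confirming that the irrelevant $A$-components $a$ and $b$ appearing in the right-hand side really do drop out after applying $\widehat{T}_{\alpha\beta}$, which makes the equivalence clean in both directions.
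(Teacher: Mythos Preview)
Your proposal is correct and follows essentially the same direct computation as the paper's proof: expand both sides of the Rota-Baxter family identity on $A\ltimes M$, observe that each side lands in $A\oplus\{0\}$, and read off the $\mathcal{O}$-operator family condition from the first component. Your additional remark that the $A$-components $a,b$ are killed by $\widehat{T}_{\alpha\beta}$, ensuring no spurious constraint and hence a clean biconditional, is a helpful clarification that the paper leaves implicit.
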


\begin{proof}
For any $(a,u), (b,v) \in A \oplus M$ and $\alpha, \beta \in \Omega$, we have 
\begin{align}\label{2}
\widehat{T}_\alpha(a,u) \star \widehat{T}_\beta(b,v) = (T_\alpha(u), 0) \star (T_\beta(u), 0) = \big( T_\alpha(u) \cdot T_\beta(v),0 \big).
\end{align}
On the other hand,
\begin{align}\label{3}
\widehat{T}_{\alpha \beta} (\widehat{T}_\alpha(a,u) \star (b,v) + (a,u) \star \widehat{T}_\beta (b,v)) &= \widehat{T}_{\alpha \beta} ((T_\alpha(u),0) \star (b,v) + (a,u) \star (T_\beta(v),0)) \nonumber \\
&= \widehat{T}_{\alpha \beta} \big( (T_\alpha(u) \cdot b,~ T_\alpha (u) \cdot v) + (a \cdot T_\beta(v),~ u \cdot T_\beta(v)) \big) \nonumber \\
&= \big( T_{\alpha \beta} (T_\alpha(u) \cdot v + u \cdot T_\beta(v)),0 \big).
\end{align}
Hence the result follows from (\ref{2}) and (\ref{3}).
\end{proof}


Let $A$ be an associative algebra. Consider the tensor product algebra $A \otimes {\bf k} \Omega$ with the product 
\begin{align}\label{a-tensor}
(a \otimes \alpha)   \bullet (b \otimes \beta) = a \cdot b \otimes \alpha \beta.
\end{align}
Moreover, if $M$ is an $A$-bimodule then $M \otimes {\bf k} \Omega$ can be given an $(A \otimes {\bf k} \Omega)$-bimodule structure with the left and right actions (both of them are denoted by the same notation as $\bullet$ )
\begin{align}\label{m-tensor}
(a \otimes \alpha) \bullet (u \otimes \beta) = a \cdot u \otimes \alpha \beta~~\text{and}~~(u \otimes \beta) \bullet (a \otimes \alpha) = u \cdot a \otimes \beta \alpha.
\end{align}

With these notations, we have the following result. The proof will be given later when we will consider the notion of twisted $\mathcal{O}$-operator family (see Proposition \ref{of-o}). 

\begin{prop}
Let $\lbrace T_\alpha : M \rightarrow A \rbrace_{\alpha \in \Omega}$ be an $\mathcal{O}$-operator family. Then the map $$T: M \otimes {\bf k} \Omega \rightarrow A \otimes {\bf k} \Omega~~\text{given by}~~T(u \otimes \alpha) = T_\alpha(u) \otimes \alpha$$ is an $\mathcal{O}$-operator on $M \otimes {\bf k} \Omega$ over the algebra $A \otimes {\bf k} \Omega$.
\end{prop}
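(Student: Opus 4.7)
The plan is to verify the $\mathcal{O}$-operator defining identity
\[
T(x) \bullet T(y) \;=\; T\bigl( T(x) \bullet y + x \bullet T(y) \bigr)
\]
for $x, y \in M \otimes {\bf k}\Omega$, where the left/right actions of $A \otimes {\bf k}\Omega$ on $M \otimes {\bf k}\Omega$ are those defined in (\ref{m-tensor}) and the product on $A \otimes {\bf k}\Omega$ is (\ref{a-tensor}). Both sides of this identity are bilinear in $x,y$, so it suffices to verify it for elementary tensors $x = u \otimes \alpha$ and $y = v \otimes \beta$ with $u,v \in M$ and $\alpha, \beta \in \Omega$.

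First I would expand the left-hand side using the definition of $T$ and the product $\bullet$:
\[
T(u \otimes \alpha) \bullet T(v \otimes \beta) = (T_\alpha(u) \otimes \alpha) \bullet (T_\beta(v) \otimes \beta) = T_\alpha(u) \cdot T_\beta(v) \otimes \alpha\beta.
\]
Next I would expand the argument of $T$ on the right-hand side using (\ref{m-tensor}):
\[
T(u \otimes \alpha) \bullet (v \otimes \beta) + (u \otimes \alpha) \bullet T(v \otimes \beta) = \bigl( T_\alpha(u) \cdot v + u \cdot T_\beta(v) \bigr) \otimes \alpha\beta,
\]
and then apply $T$ to obtain
\[
T_{\alpha\beta}\bigl( T_\alpha(u) \cdot v + u \cdot T_\beta(v) \bigr) \otimes \alpha\beta.
\]
Comparing the two expressions reduces the claim precisely to the $\mathcal{O}$-operator family identity
\[
T_\alpha(u) \cdot T_\beta(v) = T_{\alpha\beta}\bigl( T_\alpha(u) \cdot v + u \cdot T_\beta(v) \bigr),
\]
which holds by assumption. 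Hence the identity is verified on elementary tensors and extends by bilinearity.

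There is no real obstacle here; the proof is a direct unpacking of the definitions, and the only subtle point to keep track of is that the semigroup element $\alpha$ attached to a tensor factor $u \otimes \alpha$ is exactly the index selecting the operator $T_\alpha$ to apply, so that the multiplication $\alpha\beta$ in $\Omega$ automatically matches the index $\alpha\beta$ of the operator $T_{\alpha\beta}$ appearing on the right-hand side of the family identity. For completeness I would remark that the same calculation works verbatim in the twisted setting, which is why the author defers the proof to Proposition \ref{of-o}.
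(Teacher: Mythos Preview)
Your proof is correct and follows exactly the same approach as the paper: the paper defers to Proposition~\ref{of-o}, whose proof is the identical direct verification on elementary tensors (with the extra $H$-term, which vanishes here). Your closing remark about the twisted setting is precisely why the author organizes it this way.
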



\begin{defn}
Let $\{ T_\alpha : M \rightarrow A \}_{\alpha \in \Omega}$ and $\{ S_\alpha : M \rightarrow A \}_{\alpha \in \Omega}$ be two $\mathcal{O}$-operator families. They are said to be compatible if for any $\lambda , \eta \in {\bf k}$, the collection $\{ \lambda T_\alpha + \eta S_\alpha : M \rightarrow A \}_{\alpha \in \Omega}$ is an $\mathcal{O}$-operator family. Equivalently, for $u, v \in M$ and $\alpha, \beta \in \Omega$, the following holds
\begin{align}\label{compatibility}
T_{\alpha}(u) \cdot S_{\beta}(v) + S_{\alpha}(u) \cdot T_{\beta} (v) = T_{\alpha \beta} \big( S_{\alpha}(u) \cdot v + u \cdot S_{\beta}(v) \big) + S_{\alpha \beta} \big( T_{\alpha}(u) \cdot v + u \cdot T_{\beta}(v) \big).
\end{align}
\end{defn}

Next, we introduce the notion of the Nijenhuis family on associative algebra generalizing Nijenhuis operators. We show that an $\mathcal{O}$-operator family can be equivalently seen as a Nijenhuis family on the semidirect product.

\begin{defn}
A {\bf Nijenhuis family} on an associative algebra $A$ is given by a collection $\lbrace N_\alpha : A \rightarrow A \rbrace_{\alpha \in \Omega}$ of linear maps satisfying 
$$N_\alpha(a) \cdot N_\beta(b) = N_{\alpha \beta} \big( N_\alpha (a) \cdot b + a \cdot N_\beta(b) - N_{\alpha \beta}(a \cdot b) \big),~\text{for}~a,b \in A~\text{and}~\alpha, \beta \in \Omega.$$
\end{defn}

Any Nijenhuis operator $N: A \rightarrow A$ on the associative algebra $A$ can be viewed as a constant Nijenhuis family $\lbrace N_\alpha \rbrace_{\alpha \in \Omega}$, where $N_\alpha =N$ for all $\alpha \in \Omega$. 

\begin{prop}
Let $\lbrace N_\alpha : A \rightarrow A \rbrace_{\alpha \in \Omega}$ be a Nijenhuis family on an associative algebra $A$. Then the map $N: A \otimes {\bf k} \Omega \rightarrow A \otimes {\bf k} \Omega~\text{given by}~N(a \otimes \alpha) = N_\alpha(a) \otimes \alpha$ is a Nijenhuis operator on the algebra $A \otimes {\bf k} \Omega$.
\end{prop}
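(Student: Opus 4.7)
The plan is to verify the Nijenhuis operator identity
\[
N(x) \bullet N(y) = N\bigl( N(x) \bullet y + x \bullet N(y) - N(x \bullet y) \bigr)
\]
on the algebra $A \otimes \mathbf{k}\Omega$ by direct computation on simple tensors $x = a \otimes \alpha$ and $y = b \otimes \beta$, and then extending bilinearly. Because both sides are bilinear in $x$ and $y$, checking on simple tensors is enough.

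First I would compute the left-hand side. Using the definition of $N$ together with the product $\bullet$ on $A \otimes \mathbf{k}\Omega$ from \eqref{a-tensor}, one finds
\[
N(a \otimes \alpha) \bullet N(b \otimes \beta) = \bigl( N_\alpha(a) \otimes \alpha \bigr) \bullet \bigl( N_\beta(b) \otimes \beta \bigr) = N_\alpha(a) \cdot N_\beta(b) \otimes \alpha\beta.
\]
Next I would expand the three inner terms on the right-hand side separately, each giving a single tensor supported in the semigroup component $\alpha\beta$:
\[
N(a\otimes\alpha)\bullet(b\otimes\beta) = N_\alpha(a)\cdot b \otimes \alpha\beta, \quad (a\otimes\alpha)\bullet N(b\otimes\beta) = a\cdot N_\beta(b) \otimes \alpha\beta,
\]
and $N\bigl((a\otimes\alpha)\bullet(b\otimes\beta)\bigr) = N_{\alpha\beta}(a\cdot b)\otimes\alpha\beta$. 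Adding the first two and subtracting the third, then applying $N$ once more, pulls out the factor $N_{\alpha\beta}$ and produces exactly the expression $N_{\alpha\beta}\bigl( N_\alpha(a)\cdot b + a \cdot N_\beta(b) - N_{\alpha\beta}(a\cdot b)\bigr) \otimes \alpha\beta$.

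Equality of the two sides then reduces precisely to the defining identity of a Nijenhuis family, applied to $a,b \in A$ with indices $\alpha,\beta \in \Omega$. So this is a matter of unwinding definitions; the only point worth emphasizing is that the tensor factor $\alpha\beta$ in $\mathbf{k}\Omega$ appears uniformly on both sides, which is why the associativity/compatibility structure of $\Omega$ does not intervene beyond what is already used in defining the product $\bullet$. I do not anticipate a real obstacle: the argument is essentially the same bookkeeping used in Proposition \ref{lift-rbf} and in the (promised) proof that an $\mathcal{O}$-operator family lifts to an ordinary $\mathcal{O}$-operator on the tensor algebra, adapted to the extra $-N_{\alpha\beta}(a\cdot b)$ term characteristic of the Nijenhuis condition.
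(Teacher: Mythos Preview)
Your proposal is correct and follows essentially the same approach as the paper: both compute the left-hand side and right-hand side of the Nijenhuis identity on simple tensors $a\otimes\alpha$, $b\otimes\beta$, observe that each side lands in the $\alpha\beta$ component, and reduce the equality to the defining identity of a Nijenhuis family. Your added remark on bilinearity justifying the reduction to simple tensors is a minor clarification the paper leaves implicit.
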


\begin{proof}
For any $a \otimes \alpha,~ b \otimes \beta \in A \otimes {\bf k} \Omega$, we have 
\begin{align}\label{4}
N(a \otimes \alpha) \bullet N(b \otimes \beta) = (N_\alpha(a) \otimes \alpha) \bullet (N_\beta(b) \otimes \beta)
                                       = N_\alpha(a) \cdot N_\beta(b) \otimes \alpha \beta.
\end{align}
On the other hand, 
\begin{align}\label{5}
&N\big( N(a \otimes \alpha) \bullet (b \otimes \beta) + (a \otimes \alpha) \bullet N(b \otimes \beta) - N((a \otimes \alpha) \bullet (b \otimes \beta)) \big) \nonumber \\
&= N \big( N_\alpha(a) \cdot b \otimes \alpha \beta + a \cdot N_\beta(b) \otimes \alpha \beta - N_{\alpha \beta}(a \cdot b) \otimes \alpha \beta \big) \nonumber \\
&= N_{\alpha \beta} \big( N_\alpha(a) \cdot b + a \cdot N_\beta(b) - N_{\alpha \beta}(a \cdot b) \big) \otimes \alpha \beta.
\end{align}
Since $\lbrace N_\alpha \rbrace_{\alpha \in \Omega}$ is a Nijenhuis family, it follows from (\ref{4}) and (\ref{5}) that $N$ is a Nijenhuis operator on $A \otimes {\bf k} \Omega$.
\end{proof}

Compatible $\mathcal{O}$-operator families gives rise to a Nijenhuis family under certain invertibility condition. More precisely, we have the following.

\begin{prop}
Let $\lbrace T_\alpha : M \rightarrow A \rbrace_{\alpha \in \Omega}$ and $\lbrace S_\alpha : M \rightarrow A \rbrace_{\alpha \in \Omega}$ be two compatible $\mathcal{O}$-operator families in which the family $\lbrace S_\alpha : M \rightarrow A \rbrace_{\alpha \in \Omega}$ (resp. $\lbrace T_\alpha : M \rightarrow A \rbrace_{\alpha \in \Omega}$) is invertible. Then the collection $\lbrace N_\alpha = T_{\alpha} \circ S_{\alpha}^{-1} : A \rightarrow A \rbrace_{\alpha \in \Omega}$ (resp. $\lbrace N_\alpha = S_{\alpha} \circ T_{\alpha}^{-1} : A \rightarrow A \rbrace_{\alpha \in \Omega}$) is a Nijenhuis family on $A$.
\end{prop}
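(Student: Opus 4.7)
The plan is a direct verification of the Nijenhuis family identity
\[ N_\alpha(a) \cdot N_\beta(b) = N_{\alpha\beta}\bigl( N_\alpha(a) \cdot b + a \cdot N_\beta(b) - N_{\alpha\beta}(a \cdot b)\bigr) \]
for $N_\alpha = T_\alpha \circ S_\alpha^{-1}$. Since each $S_\alpha$ is invertible, it suffices to check the identity when $a = S_\alpha(u)$ and $b = S_\beta(v)$ range over $A$ as $u, v$ range over $M$. With this substitution, $N_\alpha(a) = T_\alpha(u)$ and $N_\beta(b) = T_\beta(v)$, so the left-hand side becomes $T_\alpha(u) \cdot T_\beta(v)$, which by the $\mathcal{O}$-operator family identity for $T$ equals $T_{\alpha\beta}\bigl(T_\alpha(u) \cdot v + u \cdot T_\beta(v)\bigr)$.

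For the right-hand side, I would first simplify the correction term $N_{\alpha\beta}(a \cdot b) = N_{\alpha\beta}\bigl(S_\alpha(u) \cdot S_\beta(v)\bigr)$. Applying the $\mathcal{O}$-operator family identity for $S$, this equals $N_{\alpha\beta} S_{\alpha\beta}\bigl(S_\alpha(u) \cdot v + u \cdot S_\beta(v)\bigr) = T_{\alpha\beta}\bigl(S_\alpha(u) \cdot v + u \cdot S_\beta(v)\bigr)$, using $N_{\alpha\beta} S_{\alpha\beta} = T_{\alpha\beta}$. The remaining contribution is $T_\alpha(u) \cdot S_\beta(v) + S_\alpha(u) \cdot T_\beta(v)$, and here is where the compatibility identity (\ref{compatibility}) enters: it rewrites this sum precisely as $T_{\alpha\beta}\bigl(S_\alpha(u) \cdot v + u \cdot S_\beta(v)\bigr) + S_{\alpha\beta}\bigl(T_\alpha(u) \cdot v + u \cdot T_\beta(v)\bigr)$. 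The two copies of $T_{\alpha\beta}(S_\alpha(u)\cdot v + u\cdot S_\beta(v))$ cancel, leaving $S_{\alpha\beta}\bigl(T_\alpha(u) \cdot v + u \cdot T_\beta(v)\bigr)$ inside $N_{\alpha\beta}$.

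Applying $N_{\alpha\beta} = T_{\alpha\beta} \circ S_{\alpha\beta}^{-1}$ to this element gives $T_{\alpha\beta}\bigl(T_\alpha(u) \cdot v + u \cdot T_\beta(v)\bigr)$, which matches the left-hand side computed above. This closes the verification for the $T \circ S^{-1}$ case; the $S \circ T^{-1}$ statement follows by swapping the roles of $T$ and $S$ (compatibility is symmetric in the two families).

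The argument is essentially a bookkeeping computation, so I do not anticipate a real obstacle; the only delicate point is recognizing that compatibility is exactly what is needed to eliminate the mixed terms $T_\alpha(u) \cdot S_\beta(v) + S_\alpha(u) \cdot T_\beta(v)$ after the $S$-identity has been used to expand the $N_{\alpha\beta}(a\cdot b)$ correction. This is the single mechanism that makes the Nijenhuis identity come out, so I would make sure to highlight that step rather than the surrounding algebra.
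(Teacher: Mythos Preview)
Your proof is correct and follows essentially the same approach as the paper: both arguments use the $\mathcal{O}$-operator identity for $T$, the $\mathcal{O}$-operator identity for $S$ (the paper phrases this equivalently as ``$\{S_\alpha^{-1}\}$ is a derivation family''), and the compatibility relation (\ref{compatibility}) to cancel the mixed terms. The only cosmetic difference is that the paper runs a single chain of equalities starting from $N_\alpha(a)\cdot N_\beta(b)$ with $a,b$ arbitrary, whereas you substitute $a=S_\alpha(u)$, $b=S_\beta(v)$ upfront and compare both sides; the underlying computation is identical.
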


\begin{proof}
We only prove the case in which the family $\lbrace S_\alpha : M \rightarrow A \rbrace_{\alpha \in \Omega}$ is invertible. The other case is similar. For any $a,b \in A$ and $\alpha, \beta \in \Omega$, we have
\begin{align*}
&N_{\alpha}(a) \cdot N_{\beta}(b) \\
                          &= T_{\alpha}(S_{\alpha}^{-1}(a)) \cdot T_{\beta}(S_{\beta}^{-1}(b))\\
                          &= T_{\alpha \beta} \big( (T_{\alpha}\circ S_{\alpha}^{-1}(a)) \cdot S^{-1}_{\beta}(b) + S_{\alpha}^{-1}(a) \cdot (T_{\beta} \circ S^{-1}_{\beta}(b)) \big)\\
                          &= T_{\alpha \beta} \circ S_{\alpha \beta}^{-1} ~ \big[ S_{\alpha \beta} \big(     (T_{\alpha}\circ S_{\alpha}^{-1}(a)) \cdot S^{-1}_{\beta}(b) + S_{\alpha}^{-1}(a) \cdot (T_{\beta} \circ S^{-1}_{\beta}(b))  \big)    \big]\\
                          &= T_{\alpha \beta} \circ S_{\alpha \beta}^{-1} ~ \big[ (T_{\alpha} \circ S_{\alpha}^{-1})(a) \cdot b ~+~ a \cdot (T_{\beta} \circ S_{\beta}^{-1})(b) ~-~T_{\alpha \beta} \big( a \cdot S_{\beta}^{-1}(b) + S^{-1}_{\alpha} (a) \cdot b \big) \big]  \quad (\text{by}~(\ref{compatibility}))  \\
                          &= T_{\alpha \beta} \circ S_{\alpha \beta}^{-1} ~ \big[ (T_{\alpha} \circ S_{\alpha}^{-1})(a) \cdot b ~+~ a \cdot (T_{\beta} \circ S_{\beta}^{-1})(b) -(T_{\alpha \beta} \circ S^{-1}_{\alpha \beta})(a \cdot b)) \big] \\
                &     \qquad \qquad   \qquad \qquad   \qquad \qquad    (\text{since}~\lbrace S_{\alpha}^{-1} \rbrace_{\alpha \in \Omega}~~\text{is a derivation family}) \\
                          &= N_{\alpha \beta} \big( N_{\alpha}(a) \cdot b + a \cdot N_{\beta}(b) -N_{\alpha \beta}(a \cdot b) \big).
\end{align*}
Hence the proof.
\end{proof}

The following result is similar to Proposition \ref{lift-rbf}, thus we omit the proof.

\begin{prop}\label{lift-nf}
Let $A$ be an associative algebra and $M$ an $A$-bimodule. A collection $\lbrace T_\alpha : M \rightarrow A \rbrace_{\alpha \in \Omega}$ of linear maps is an $\mathcal{O}$-operator family if and only if
the collection $\{ \widehat{T}_\alpha : A \oplus M \rightarrow A \oplus M \}_{\alpha \in \Omega}$ is a Nijenhuis family on the semidirect product algebra $A \ltimes M$.
\end{prop}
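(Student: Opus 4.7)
The plan is to verify the Nijenhuis family identity for $\{\widehat{T}_\alpha\}_{\alpha \in \Omega}$ by direct computation on the semidirect product $A \ltimes M$, in the same spirit as the proof of Proposition \ref{lift-rbf}. The key observation driving the argument is that each $\widehat{T}_\alpha$ lands in $A \oplus 0$ and, on an arbitrary input $(x, w) \in A \oplus M$, depends only on the $M$-component $w$, producing $(T_\alpha(w), 0)$. This means that when $\widehat{T}_{\alpha\beta}$ is applied to any sum of elements of $A \oplus M$, the $A$-components of that sum are discarded.

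First I would compute the left-hand side $\widehat{T}_\alpha(a,u) \star \widehat{T}_\beta(b,v)$ using the $\star$-multiplication; this is immediate and yields $(T_\alpha(u) \cdot T_\beta(v),\, 0)$. Next I would expand the three terms $\widehat{T}_\alpha(a,u) \star (b,v)$, $(a,u) \star \widehat{T}_\beta(b,v)$, and $\widehat{T}_{\alpha\beta}((a,u) \star (b,v))$ appearing inside $\widehat{T}_{\alpha\beta}$ on the right-hand side of the Nijenhuis identity. Summing the first two and subtracting the third produces an element of $A \oplus M$ whose $M$-component is exactly $T_\alpha(u) \cdot v + u \cdot T_\beta(v)$ and whose $A$-component is some combination of $T_\alpha(u)\cdot b$, $a \cdot T_\beta(v)$, and $T_{\alpha\beta}(a \cdot b)$ terms.

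Applying $\widehat{T}_{\alpha\beta}$ then projects onto the $M$-component, so the $A$-component is discarded, and the right-hand side collapses to $(T_{\alpha\beta}(T_\alpha(u) \cdot v + u \cdot T_\beta(v)),\, 0)$. Equating this with the left-hand side, the Nijenhuis family identity for $\{\widehat{T}_\alpha\}$ becomes equivalent to $T_\alpha(u) \cdot T_\beta(v) = T_{\alpha\beta}(T_\alpha(u) \cdot v + u \cdot T_\beta(v))$ for all $u, v \in M$ and $\alpha, \beta \in \Omega$, which is precisely the $\mathcal{O}$-operator family condition; as both directions use only equalities, this gives the iff. I do not expect any substantive obstacle here: the whole argument is careful coordinate bookkeeping in $A \ltimes M$, and the essential mechanism is that $\widehat{T}_{\alpha\beta}$ annihilates the $A$-component of its input, which is exactly what makes the extra term $-\widehat{T}_{\alpha\beta}((a,u)\star (b,v))$ (present in the Nijenhuis identity but absent in the Rota-Baxter identity) harmless in this lifting.
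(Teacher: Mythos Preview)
Your proposal is correct and follows exactly the approach the paper intends: the paper omits the proof of this proposition, noting only that it is ``similar to Proposition~\ref{lift-rbf}'', and your direct computation in $A \ltimes M$ is precisely that analogue. One minor slip: the $A$-component of $\widehat{T}_\alpha(a,u)\star(b,v) + (a,u)\star\widehat{T}_\beta(b,v) - \widehat{T}_{\alpha\beta}((a,u)\star(b,v))$ is $T_\alpha(u)\cdot b + a\cdot T_\beta(v) - T_{\alpha\beta}(a\cdot v + u\cdot b)$, not something involving $T_{\alpha\beta}(a\cdot b)$, but as you correctly observe this component is annihilated by $\widehat{T}_{\alpha\beta}$ anyway, so the argument is unaffected.
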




It follows from Proposition \ref{lift-rbf} and Proposition \ref{lift-nf} that an $\mathcal{O}$-operator family lifts to both Rota-Baxter family and Nijenhuis family on the semidirect product.

\begin{defn} \cite{zhang-free}
A {\bf dendriform family algebra} is a vector space $D$ together with a family of bilinear operations $\lbrace \prec_\alpha, \succ_\alpha : D \otimes D \rightarrow D \rbrace_{\alpha \in \Omega }$ satisfying 
\begin{align*}
(x \prec_\alpha y) \prec_\beta z                              &= x \prec_{\alpha \beta} (y \prec_\beta z + y \succ_\alpha z)\\
(x \succ_\alpha y) \prec_\beta z                              &= x \succ_\alpha (y \prec_\beta z),\\
(x \prec_\beta y + x \succ_\alpha y) \succ_{\alpha \beta} z   &= x \succ_\alpha (y \succ_\beta z), \text{ for } x,y,z \in D \text{ and } \alpha, \beta \in \Omega.
\end{align*}
\end{defn}

\begin{remark}
Any dendriform algebra $(D, \prec, \succ)$ can be considered as a constant dendriform family algebra $(D, \lbrace \prec_\alpha, \succ_\alpha  \rbrace_{\alpha \in \Omega })$, where $\prec_\alpha = \prec$ and $\succ_\alpha = \succ$ for all $\alpha \in \Omega$.
\end{remark}

\begin{defn}
Let $(D, \lbrace \prec_\alpha, \succ_\alpha \rbrace_{\alpha \in \Omega})$ and $(D^{'}, \lbrace \prec^{'}_\alpha, \succ^{'}_\alpha \rbrace_{\alpha \in \Omega})$ be two dendriform family algebras. A morphism between them is a linear map $f : D \rightarrow D^{'}$ satisfying 
$$f(x \prec_\alpha y) = f(x) \prec^{'}_\alpha f(y)~~\text{and}~~f(x \succ_\alpha y) = f(x) \succ^{'}_\alpha f(y), \text{ for } x,y \in D,~ \alpha \in \Omega.$$
Let {\bf Dendf} denote the category of dendriform family algebras and morphisms between them.
\end{defn}

The following result finds the connection between dendriform family algebras and ordinary dendriform algebras \cite{zhang}.

\begin{prop} \label{see}
Let $(D, \lbrace \prec_\alpha, \succ_\alpha \rbrace_{\alpha \in \Omega})$ be a dendriform family algebra. Then the vector space $D \otimes {\bf k} \Omega$ with the bilinear operations
$$(x \otimes \alpha) \prec (y \otimes \beta) = (x \prec_\beta y ) \otimes \alpha \beta ~~~ \text{ and } ~~~ (x \otimes \alpha) \succ (y \otimes \beta) = (x \succ_\alpha y ) \otimes \alpha \beta$$ 
forms a dendriform algebra.
\end{prop}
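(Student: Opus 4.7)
The plan is to verify the three dendriform axioms directly on elementary tensors $X = x \otimes \alpha$, $Y = y \otimes \beta$, $Z = z \otimes \gamma$ in $D \otimes {\bf k}\Omega$, and then extend by bilinearity. On each side of each axiom the operations $\prec$ and $\succ$ contribute only through their definitions, so after unwinding we obtain an expression of the form $({\rm something~in~}D) \otimes \alpha\beta\gamma$. The proof then reduces to matching the $D$-valued parts against one of the three defining identities of the dendriform family algebra $(D, \{\prec_\alpha, \succ_\alpha\}_{\alpha \in \Omega})$, with an appropriate relabeling of the semigroup parameters.

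Concretely, for the first dendriform axiom, I would compute
\[
(X \prec Y) \prec Z = \bigl((x \prec_\beta y) \prec_\gamma z\bigr) \otimes \alpha\beta\gamma,
\]
while on the other side
\[
X \prec (Y \prec Z + Y \succ Z) = \bigl(x \prec_{\beta\gamma}(y \prec_\gamma z + y \succ_\beta z)\bigr) \otimes \alpha\beta\gamma.
\]
The $D$-parts agree by the first dendriform family axiom after the substitution $\alpha \mapsto \beta,\, \beta \mapsto \gamma$. For the second axiom, both $(X \succ Y) \prec Z$ and $X \succ (Y \prec Z)$ unfold to expressions whose $D$-parts read $(x \succ_\alpha y) \prec_\gamma z$ and $x \succ_\alpha (y \prec_\gamma z)$, which coincide by the second dendriform family axiom (with the obvious relabeling). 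For the third axiom, the two sides reduce similarly to $(x \prec_\beta y + x \succ_\alpha y) \succ_{\alpha\beta} z$ versus $x \succ_\alpha(y \succ_\beta z)$, matching the third family axiom.

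Since the semigroup element attached to the tensor factor is the same ($\alpha\beta\gamma$) on both sides of each axiom, no subtlety arises from ${\bf k}\Omega$; the only thing to watch is that the subscripts appearing on the family operations on the $D$-side match those prescribed by the family axioms, which they do because the definitions $(x \otimes \alpha) \prec (y \otimes \beta) := (x \prec_\beta y) \otimes \alpha\beta$ and $(x \otimes \alpha) \succ (y \otimes \beta) := (x \succ_\alpha y) \otimes \alpha\beta$ are chosen precisely to guarantee this. There is no real obstacle; the only mild bookkeeping concern is to keep straight which semigroup index decorates which operation at each nested step, and this is where a careful choice of variable names in the verification pays off. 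Bilinearity of the operations on $D \otimes {\bf k}\Omega$ then upgrades the identities from elementary tensors to all of $D \otimes {\bf k}\Omega$, completing the proof.
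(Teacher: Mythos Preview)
Your proposal is correct. The paper does not prove this proposition directly, merely citing it from \cite{zhang}; however, the paper does prove the generalization to NS-family algebras (Theorem~\ref{ns-fam-ns}) by exactly the method you describe: unfold both sides on elementary tensors $x\otimes\alpha$, $y\otimes\beta$, $z\otimes\gamma$, observe that the ${\bf k}\Omega$-factor is $\alpha\beta\gamma$ throughout, and match the $D$-valued parts against the family axioms with the appropriate relabeling of indices.
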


It has been in \cite{zhang-free} that a Rota-Baxter family on an algebra induces a dendriform family structure on the underlying vector space. In the following result, we generalize this construction for $\mathcal{O}$-operator families.

\begin{prop}\label{prop-o-dend}
 Let $\lbrace T_\alpha: M \rightarrow A \rbrace_{\alpha \in \Omega}$ be an $\mathcal{O}$-operator family. Then $(M, \lbrace \prec_\alpha, \succ_\alpha \rbrace_{\alpha \in \Omega})$ is a dendriform family algebra (often denoted by $M_{ \{T_\alpha \} }$), where 
$$u \prec_\alpha v=u \cdot T_\alpha(v)~~\text{and}~~u \succ_\alpha v = T_\alpha(u) \cdot v, ~ \text{for } u, v \in M \text{ and } \alpha \in \Omega.$$
Moreover, if $(\phi, \psi)$ is a {morphism} of $\mathcal{O}$-operator families from $\lbrace T_\alpha : M \rightarrow A \rbrace_{\alpha \in \Omega}$ to $\lbrace T^{'}_\alpha : M^{'} \rightarrow A^{'} \rbrace_{\alpha \in \Omega}$, then $\psi: M \rightarrow M^{'}$ is a morphism between induced dendriform family algebras. In other words, there is a functor $\mathcal{F} : {\bf Ooperf} \rightarrow {\bf Dendf}$.
\end{prop}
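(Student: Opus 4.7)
The plan is to verify the three dendriform family identities for the operations $\prec_\alpha, \succ_\alpha$ by direct substitution, using only two inputs: the $\mathcal{O}$-operator family relation $T_\alpha(u)\cdot T_\beta(v) = T_{\alpha\beta}(T_\alpha(u)\cdot v + u\cdot T_\beta(v))$, and the associativity axioms defining the $A$-bimodule structure on $M$. Once the underlying dendriform family structure is established, the functoriality claim reduces to a short computation showing that the second component $\psi$ of a morphism $(\phi,\psi)$ automatically intertwines the induced operations.

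First I would treat the middle axiom $(x\succ_\alpha y)\prec_\beta z = x\succ_\alpha (y\prec_\beta z)$: both sides unfold to $T_\alpha(x)\cdot y\cdot T_\beta(z)$, so the identity is simply the bimodule associativity $(T_\alpha(x)\cdot y)\cdot T_\beta(z) = T_\alpha(x)\cdot (y\cdot T_\beta(z))$ and does not use the $\mathcal{O}$-operator family relation at all. For the first axiom I expand the left-hand side to $x\cdot T_\alpha(y)\cdot T_\beta(z)$; replacing the interior product $T_\alpha(y)\cdot T_\beta(z)$ via the $\mathcal{O}$-operator family identity rewrites it as $x\cdot T_{\alpha\beta}\bigl(T_\alpha(y)\cdot z + y\cdot T_\beta(z)\bigr)$, which is precisely $x\prec_{\alpha\beta}(y\succ_\alpha z + y\prec_\beta z)$. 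The third axiom is handled symmetrically: the left-hand side $T_{\alpha\beta}\bigl(T_\alpha(x)\cdot y + x\cdot T_\beta(y)\bigr)\cdot z$ collapses via the same defining identity to $\bigl(T_\alpha(x)\cdot T_\beta(y)\bigr)\cdot z$, and one more application of bimodule associativity gives $T_\alpha(x)\cdot\bigl(T_\beta(y)\cdot z\bigr) = x\succ_\alpha(y\succ_\beta z)$. Throughout, the only thing to watch is the correct placement of the subscripts $\alpha$, $\beta$, $\alpha\beta$ so that the index on $T_{\alpha\beta}$ produced by the family relation matches the outer index demanded by the dendriform family axiom.

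For the functoriality statement, the morphism axioms provide $\psi(a\cdot u) = \phi(a)\cdot'\psi(u)$, $\psi(u\cdot a) = \psi(u)\cdot'\phi(a)$ and $\phi\circ T_\alpha = T'_\alpha\circ\psi$. A single line then gives $\psi(u\prec_\alpha v) = \psi(u\cdot T_\alpha(v)) = \psi(u)\cdot'\phi(T_\alpha(v)) = \psi(u)\cdot' T'_\alpha(\psi(v)) = \psi(u)\prec'_\alpha\psi(v)$, and the computation for $\succ_\alpha$ is identical with the roles of left and right actions swapped. Hence the assignment $\{T_\alpha\}\mapsto M_{\{T_\alpha\}}$ and $(\phi,\psi)\mapsto\psi$ respects composition and identities (both by construction), defining the required functor $\mathcal{F}:\mathbf{Ooperf}\to\mathbf{Dendf}$. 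I do not expect any genuine obstacle: the argument is a book-keeping exercise in which the $\mathcal{O}$-operator family relation is used exactly once per nontrivial axiom, and the rest is bimodule associativity.
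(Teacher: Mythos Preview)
Your proof is correct and matches the paper's approach exactly: the paper omits the proof at this proposition but later proves the more general $H$-twisted version (Proposition~\ref{twisted-rota-ns}) by the same direct verification---using bimodule associativity for the middle axiom and one application of the $\mathcal{O}$-operator family identity for each of the outer two---and handles the morphism claim with the identical one-line computation you give.
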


%



Next we show that a dendriform family structure is always induced by an $\mathcal{O}$-operator family. Given a dendriform family algebra $(D, \lbrace \prec_\alpha, \succ_\alpha \rbrace_{\alpha \in \Omega})$, consider the dendriform algebra $(D \otimes {\bf k} \Omega , \prec, \succ )$ defined in Proposition \ref{see}. Hence $D \otimes {\bf k} \Omega$ carries an associative algebra structure with the product given by $(x \otimes \alpha) \odot (y \otimes \beta)= (x \prec_{\beta} y + x \succ_{\alpha} y) \otimes \alpha \beta$. We denote this associative algebra by $(D \otimes {\bf k} \Omega)_{\mathrm{Tot}}$. Moreover, the vector space $D$ can be given a $(D \otimes {\bf k} \Omega)_{\mathrm{Tot}}$-bimodule structure with the left and right actions 
$$(x \otimes \alpha) \cdot  y= x \succ_{\alpha} y~~\text{and}~~y \cdot (x \otimes \alpha)= y \prec_{\alpha} x,~\text{for}~(x \otimes \alpha) \in (D \otimes {\bf k} \Omega)_{\mathrm{Tot}}~\text{and}~y \in D.$$ With these notations, the collection $\lbrace \id_{\alpha}: D \rightarrow (D \otimes {\bf k} \Omega)_{\mathrm{Tot}} \rbrace_{\alpha \in \Omega}$ of maps defined by $\mathrm{Id}_\alpha(x) = x \otimes \alpha$, for $x \in D$, is an $\mathcal{O}$-operator family. Moreover, the induced dendriform family structure on $D$ coincides with the given one. The above construction gives rise to a functor $\mathcal{G}  : {\bf Dendf} \rightarrow {\bf Ooperf}.$

\begin{prop}\label{prop-functor}
The functor $\mathcal{G}$ is left adjoint to the functor $\mathcal{F}$. That is, for any dendriform family algebra $D = (D, \{ \prec_\alpha, \succ_\alpha \}_{\alpha \in \Omega}) \in {\bf Dendf}$ and an $\mathcal{O}$-operator family $\{ T_\alpha : M \rightarrow A \}_{\alpha \in \Omega} \in {\bf Ooperf}$, we have
\begin{align*}
\mathrm{Hom}_{\bf Ooperf} \big(  D \xrightarrow{ \{ \mathrm{Id}_\alpha \} } (D \otimes {\bf k} \Omega)_\mathrm{Tot},~ M \xrightarrow{ \{ T_\alpha \} } A \big) ~\cong ~ \mathrm{Hom}_{\bf Dendf} (D, M_{ \{ T_\alpha \} }).
\end{align*}
\end{prop}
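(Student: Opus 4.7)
The plan is to produce an explicit natural bijection and show that, under the correspondence $\psi \leftrightarrow f$, the conditions on each side match one-to-one. An element of the left-hand side is a pair $(\phi,\psi)$ consisting of an algebra homomorphism $\phi : (D\otimes{\bf k}\Omega)_\mathrm{Tot}\to A$ and a linear map $\psi : D\to M$ satisfying the bimodule-compatibility equations
\[
\psi\bigl((x\otimes\alpha)\cdot y\bigr)=\phi(x\otimes\alpha)\cdot\psi(y),\qquad \psi\bigl(y\cdot(x\otimes\alpha)\bigr)=\psi(y)\cdot\phi(x\otimes\alpha),
\]
together with the intertwining condition $\phi\circ\mathrm{Id}_\alpha=T_\alpha\circ\psi$, i.e.\ $\phi(x\otimes\alpha)=T_\alpha(\psi(x))$. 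An element of the right-hand side is a linear map $f : D\to M$ with $f(x\prec_\alpha y)=f(x)\cdot T_\alpha(f(y))$ and $f(x\succ_\alpha y)=T_\alpha(f(x))\cdot f(y)$ for all $x,y\in D$ and $\alpha\in\Omega$.

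The map LHS $\to$ RHS sends $(\phi,\psi)$ to $f:=\psi$. Using the bimodule structure on $D$ (namely $(x\otimes\alpha)\cdot y=x\succ_\alpha y$ and $y\cdot(x\otimes\alpha)=y\prec_\alpha x$) together with the intertwining condition, the two bimodule-compatibility equations on $\psi$ translate directly into the two dendriform-family-algebra morphism equations for $f$. In the reverse direction, given $f$, one sets $\psi:=f$ and \emph{defines} $\phi$ by the formula forced on us, namely $\phi(x\otimes\alpha):=T_\alpha(f(x))$, extended linearly. The intertwining condition and the two bimodule-compatibility conditions are then immediate from the dendriform-family morphism property of $f$. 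The two assignments are visibly mutually inverse, since in each direction $\psi=f$ and $\phi$ is determined by $\psi$.

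The one non-trivial check is that the $\phi$ manufactured from $f$ is actually an algebra homomorphism for the totalized product $\odot$ on $(D\otimes{\bf k}\Omega)_\mathrm{Tot}$. Expanding
\[
\phi\bigl((x\otimes\alpha)\odot(y\otimes\beta)\bigr)=T_{\alpha\beta}\bigl(f(x\prec_\beta y)+f(x\succ_\alpha y)\bigr)=T_{\alpha\beta}\bigl(f(x)\cdot T_\beta(f(y))+T_\alpha(f(x))\cdot f(y)\bigr),
\]
while on the other hand $\phi(x\otimes\alpha)\cdot\phi(y\otimes\beta)=T_\alpha(f(x))\cdot T_\beta(f(y))$. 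These are equal precisely by the $\mathcal{O}$-operator family identity applied to $u=f(x)$, $v=f(y)$. This is the pivotal point where the $\mathcal{O}$-operator family hypothesis on $\{T_\alpha\}$ is used, and it is exactly what makes $\mathcal{G}$ left adjoint to $\mathcal{F}$.

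Finally, naturality of the bijection in both variables follows automatically from the fact that the bijection is given by the identity on the underlying linear map $\psi=f$ and by the canonical formula $\phi(x\otimes\alpha)=T_\alpha(f(x))$, neither of which involves any choices. The main (and essentially only) obstacle is the algebra-homomorphism verification above; once that is in place, the rest is a routine unpacking of definitions.
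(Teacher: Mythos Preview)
Your proof is correct and follows essentially the same approach as the paper: both directions of the bijection are given by $\psi=f$ and $\phi(x\otimes\alpha)=T_\alpha(f(x))$, with the same verifications. If anything, you are more explicit than the paper, which simply asserts that $T^f$ is an algebra homomorphism without writing out the computation using the $\mathcal{O}$-operator family identity.
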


\begin{proof}
Let $f \in \mathrm{Hom}_{\bf Dendf} (D, M_{ \{ T_\alpha \} }).$ We define a map $T^f : (D \otimes {\bf k} \Omega)_\mathrm{Tot} \rightarrow A$ by
\begin{align*}
T^f (x \otimes \alpha) := T_\alpha (f (x)),  \text{ for } x \otimes \alpha \in (D \otimes {\bf k} \Omega)_\mathrm{Tot}.
\end{align*}
It is easy to verify that $T^f$ is an associative algebra homomorphism. Further, we have
\begin{align*}
f ((x \otimes \alpha) \cdot y) = T^f (x \otimes \alpha) \cdot f(y) ~~~ \text{ and } ~~~ f (y \cdot (x \otimes \alpha)) = f(y) \cdot T^f (x \otimes \alpha),
\end{align*}
for $x \otimes \alpha \in (D \otimes {\bf k} \Omega)_\mathrm{Tot}$ and $y \in D.$ Moreover, $T^f \circ \mathrm{Id}_\alpha = T_\alpha \circ f$, for all $\alpha \in \Omega$. This implies that the pair 
\begin{align*}
(T^f, f) \in \mathrm{Hom}_{\bf Ooperf} \big(  D \xrightarrow{ \{ \mathrm{Id}_\alpha \} } (D \otimes {\bf k} \Omega)_\mathrm{Tot},~ M \xrightarrow{ \{ T_\alpha \} } A \big).
\end{align*}
Conversely, if $(\phi, \psi) \in \mathrm{Hom}_{\bf Ooperf} \big(  D \xrightarrow{ \{ \mathrm{Id}_\alpha \} } (D \otimes {\bf k} \Omega)_\mathrm{Tot},~ M \xrightarrow{ \{ T_\alpha \} } A \big)$, then $\psi \in  \mathrm{Hom}_{\bf Dendf} (D, M_{ \{ T_\alpha \} })$. The above two correspondences are inverses to each other. This completes the proof.
\end{proof}

\subsection{Associative Yang-Baxter family}

In \cite{aguiar-infhopf}, M. Aguiar introduced the notion of associative Yang-Baxter equation and find connections with Rota-Baxter operators and infinitesimal bialgebras. Here we define two variations of the associative Yang-Baxter equation in the family context and find their relations with $\mathcal{O}$-operator family.

Let $A$ be an associative algebra. Using Sweedler's notion, we write any element $r_\alpha \in A^{\otimes 2}$ as $r_\alpha = \sum r^{(1)}_\alpha \otimes r^{(2)}_\alpha$. From now onward, we shall omit the summation and simply write $r_\alpha = r^{(1)}_\alpha \otimes r^{(2)}_\alpha$ if no confusion arises. For any $r_\alpha = r^{(1)}_\alpha \otimes r^{(2)}_\alpha$, we define three elements 
$$r^{12}_\alpha = r^{(1)}_\alpha \otimes r^{(2)}_\alpha \otimes 1, \qquad r^{13}_\alpha = r^{(1)}_\alpha \otimes 1 \otimes r^{(2)}_\alpha ~~~ \text{ and } ~~~ r^{23}_\alpha = 1 \otimes r^{(1)}_\alpha \otimes r^{(2)}_\alpha$$ 
of $A \otimes A \otimes A$ if $A$ has identity (resp. $A^{+} \otimes A^{+} \otimes A^{+}$, where $A^{+}= A \oplus {\bf k}$, if $A$ has no identity). Note that the associative multiplication on $A$ induces an associative multiplication on the tensor product $A \otimes A \otimes A$ (resp. $A^{+} \otimes A^{+} \otimes A^{+}$) by componentwise multiplication. We shall use this multiplication to define associative Yang-Baxter families.

\begin{defn}\label{aybf-1-2}
A collection $\lbrace r_\alpha \in A^{\otimes 2} \rbrace_{\alpha \in \Omega}$ of elements of $A^{\otimes 2}$ is said to be an
\begin{enumerate}
\item[(i)] {\bf associative Yang-Baxter family of type-I} (AYBF of type-I) if they satisfy 
\begin{equation}\label{8}
r^{13}_{\alpha \beta} r^{12}_\alpha - r^{12}_\alpha r^{23}_\beta + r^{23}_\beta r^{13}_{\alpha \beta} =0, \text{ for } \alpha, \beta \in \Omega,
\end{equation}

\item[(ii)] {\bf associative Yang-Baxter family of type-II} (AYBF of type-II) if they satisfy 
\begin{equation}\label{9}
r^{13}_\alpha r^{12}_\beta - r^{12}_{\alpha \beta} r^{23}_\alpha + r^{23}_\beta r^{13}_{\alpha \beta}=0, \text{ for } \alpha ,\beta \in \Omega.
\end{equation}
\end{enumerate}
\end{defn}

\begin{remark}
A constant family $\{ r_\alpha \in A^{\otimes 2} \}_{\alpha \in \Omega}$ with $r_\alpha = r$ for all $\alpha \in \Omega$, is an AYBF of type-I (or AYBF of type-II) if and only if $r$ is a solution of the associative Yang-Baxter equation (see \cite{aguiar-infhopf})
\begin{align*}
r^{13} r^{12} - r^{12} r^{23} + r^{23} r^{13}=0.
\end{align*}
\end{remark}

\begin{prop}\label{aybf-1-prop}
Let $\lbrace r_\alpha \rbrace_{\alpha \in \Omega}$ be an AYBF of type-I. Then the collection $\lbrace R_\alpha : A \rightarrow A \rbrace_{\alpha \in \Omega}$ of maps
\begin{equation}\label{10}
R_\alpha(a) = r^{(1)}_\alpha \cdot a \cdot  r^{(2)}_\alpha,~\text{for}~a \in A
\end{equation}
is a Rota-Baxter family.
\end{prop}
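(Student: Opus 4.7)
The plan is to verify the Rota-Baxter family identity
\[
R_\alpha(a) \cdot R_\beta(b) = R_{\alpha\beta}\bigl(R_\alpha(a) \cdot b + a \cdot R_\beta(b)\bigr)
\]
by a direct computation: expand both sides in Sweedler notation, and recognize the resulting equation as the image of the AYBF-I identity (\ref{8}) under a suitable linear contraction $A^{\otimes 3} \to A$. The approach is essentially the family analogue of Aguiar's original argument relating solutions of the associative Yang-Baxter equation to Rota-Baxter operators.

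First, I would write $r_\alpha = r^{(1)}_\alpha \otimes r^{(2)}_\alpha$, and similarly for $r_\beta$ and $r_{\alpha\beta}$. Using componentwise multiplication in $A^{\otimes 3}$, the three terms in (\ref{8}) expand to
\[
r^{13}_{\alpha\beta} r^{12}_\alpha = r^{(1)}_{\alpha\beta} r^{(1)}_\alpha \otimes r^{(2)}_\alpha \otimes r^{(2)}_{\alpha\beta}, \qquad r^{12}_\alpha r^{23}_\beta = r^{(1)}_\alpha \otimes r^{(2)}_\alpha r^{(1)}_\beta \otimes r^{(2)}_\beta,
\]
\[
r^{23}_\beta r^{13}_{\alpha\beta} = r^{(1)}_{\alpha\beta} \otimes r^{(1)}_\beta \otimes r^{(2)}_\beta r^{(2)}_{\alpha\beta},
\]
so that the AYBF-I identity becomes an equality in $A^{\otimes 3}$ of the signed sum of these three tensors.

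Second, I would apply to this identity the linear map $\mu_{a,b} : A^{\otimes 3} \to A$ defined by $x \otimes y \otimes z \mapsto x \cdot a \cdot y \cdot b \cdot z$, where $a,b \in A$ are arbitrary. The images of the three terms are, respectively, $r^{(1)}_{\alpha\beta} r^{(1)}_\alpha \cdot a \cdot r^{(2)}_\alpha \cdot b \cdot r^{(2)}_{\alpha\beta}$, $r^{(1)}_\alpha \cdot a \cdot r^{(2)}_\alpha r^{(1)}_\beta \cdot b \cdot r^{(2)}_\beta$, and $r^{(1)}_{\alpha\beta} \cdot a \cdot r^{(1)}_\beta \cdot b \cdot r^{(2)}_\beta r^{(2)}_{\alpha\beta}$. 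Reorganizing the associative products (and noting that the unit elements in $r^{ij}_\gamma$ contribute trivially under $\mu_{a,b}$), the first term equals $R_{\alpha\beta}(R_\alpha(a) \cdot b)$ by definition (\ref{10}), the middle term equals $R_\alpha(a) \cdot R_\beta(b)$, and the third term equals $R_{\alpha\beta}(a \cdot R_\beta(b))$. Putting them together yields exactly the Rota-Baxter family identity.

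I do not expect a real obstacle here; the only thing requiring care is the bookkeeping of three independent Sweedler-type copies indexed by $\alpha$, $\beta$ and $\alpha\beta$, together with the observation that multiplying in $A^{\otimes 3}$ componentwise and then applying $\mu_{a,b}$ yields the same result as inserting $a$ between slots 1--2 and $b$ between slots 2--3 on each individual tensor. Once this is recognized, the proof reduces to a one-line application of (\ref{8}).
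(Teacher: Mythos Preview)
Your proposal is correct and follows exactly the same approach as the paper: the paper also expands (\ref{8}) explicitly in Sweedler notation and then ``replaces the first tensor product by $a$ and the second tensor product by $b$'', which is precisely your contraction map $\mu_{a,b}$. Your write-up is in fact slightly more explicit than the paper's, which leaves the identification of the three resulting terms with $R_{\alpha\beta}(R_\alpha(a)\cdot b)$, $R_\alpha(a)\cdot R_\beta(b)$, and $R_{\alpha\beta}(a\cdot R_\beta(b))$ to the reader.
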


\begin{proof}
Note that the identity (\ref{8}) can be explicitly written as 
$$r^{(1)}_{\alpha \beta} \cdot r^{(1)}_\alpha \otimes r^{(2)}_\alpha \otimes r^{(2)}_{\alpha \beta} - r^{(1)}_\alpha \otimes r^{(2)}_\alpha \cdot r^{(1)}_\beta \otimes r^{(2)}_\beta + r^{(1)}_{\alpha \beta} \otimes r^{(1)}_\beta \otimes r^{(2)}_\beta \cdot r^{(2)}_{\alpha \beta} =0,~\text{for } \alpha, \beta \in \Omega.$$
Replacing the first tensor product by $a$ and the second tensor product by $b$ and using (\ref{10}), we get that the collection $\lbrace R_\alpha \rbrace_{\alpha \in \Omega}$ is a Rota-Baxter family.
\end{proof}

A collection $\lbrace r_\alpha \in A^{\otimes 2} \rbrace_{\alpha \in \Omega}$ of elements of $A^{\otimes 2}$ is said to be skew-symmetric if each $r_\alpha$ is skew-symmetric, i.e., 
$r^{(1)}_\alpha \otimes r^{(2)}_\alpha = - r^{(2)}_\alpha \otimes r^{(1)}_\alpha, \text{ for }\alpha \in \Omega.$
Equivalently, $r_\alpha = - \tau(r_\alpha)$, for $\alpha \in \Omega$, where $\tau: A^{\otimes 2} \rightarrow A^{\otimes 2}$ is the switching map $\tau(a \otimes b) = b \otimes a$.

Note that, for an associative algebra $A$, the dual space $A^{*}$ carries an $A$-bimodule structure (called the coadjoint $A$-bimodule) with the left and right $A$-actions given by 
$$(a \cdot f)(b)=f(b \cdot a)~~\text{and}~~(f \cdot a)(b)=f(a \cdot b), \text{ for}~f \in A^{*} \text{ and } a,b \in A.$$

\begin{prop}\label{aybf-2-prop}
Let $\lbrace r_\alpha \rbrace_{\alpha \in \Omega}$ be a skew-symmetric AYBF of type-II. Then the collection $$\lbrace T_\alpha : A^{*} \rightarrow A \rbrace_{\alpha \in \Omega}  ~\text{ defined by }~ T_\alpha(f) = f (r^{(2)}_\alpha ) r^{(1)}_\alpha, \text{ for}~f \in A^{*}$$ 
is an $\mathcal{O}$-operator family (on $A^*$ over the algebra $A$).
\end{prop}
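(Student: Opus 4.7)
The plan is to unpack both sides of the $\mathcal{O}$-operator family identity $T_\alpha(f)\cdot T_\beta(g) = T_{\alpha\beta}\bigl(T_\alpha(f)\cdot g + f\cdot T_\beta(g)\bigr)$ in terms of the Sweedler components of the $r_\gamma$'s, then use AYBF of type-II together with skew-symmetry to match them. Writing $r_\gamma = r_\gamma^{(1)}\otimes r_\gamma^{(2)}$, the left-hand side becomes
\[
T_\alpha(f)\cdot T_\beta(g) = f(r_\alpha^{(2)})\, g(r_\beta^{(2)})\, r_\alpha^{(1)}\cdot r_\beta^{(1)}.
\]
Using the coadjoint action formulas $(a\cdot h)(b) = h(b\cdot a)$ and $(h\cdot a)(b)= h(a\cdot b)$, the two summands on the right become $f(r_\alpha^{(2)})\, g(r_{\alpha\beta}^{(2)} r_\alpha^{(1)})\, r_{\alpha\beta}^{(1)}$ and $g(r_\beta^{(2)})\, f(r_\beta^{(1)} r_{\alpha\beta}^{(2)})\, r_{\alpha\beta}^{(1)}$ respectively.

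Next, I would expand the type-II equation (\ref{9}) componentwise as
\[
r_\alpha^{(1)} r_\beta^{(1)} \otimes r_\beta^{(2)} \otimes r_\alpha^{(2)} - r_{\alpha\beta}^{(1)} \otimes r_{\alpha\beta}^{(2)} r_\alpha^{(1)} \otimes r_\alpha^{(2)} + r_{\alpha\beta}^{(1)} \otimes r_\beta^{(1)} \otimes r_\beta^{(2)} r_{\alpha\beta}^{(2)} = 0,
\]
and apply $\mathrm{id}_A \otimes g \otimes f$ to this identity, i.e.\ contract the middle slot against $g$ and the right slot against $f$. This immediately gives
\[
f(r_\alpha^{(2)}) g(r_\beta^{(2)})\, r_\alpha^{(1)} r_\beta^{(1)} = f(r_\alpha^{(2)}) g(r_{\alpha\beta}^{(2)} r_\alpha^{(1)})\, r_{\alpha\beta}^{(1)} - f(r_\beta^{(2)} r_{\alpha\beta}^{(2)}) g(r_\beta^{(1)})\, r_{\alpha\beta}^{(1)},
\]
so the first right-hand term already matches $T_{\alpha\beta}(T_\alpha(f)\cdot g)$. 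It remains to identify the second term with $T_{\alpha\beta}(f\cdot T_\beta(g))$.

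That last matching is exactly where skew-symmetry enters: applying the functional $x\otimes y \mapsto f(x\cdot r_{\alpha\beta}^{(2)})\, g(y)$ to both sides of $r_\beta^{(1)} \otimes r_\beta^{(2)} + r_\beta^{(2)} \otimes r_\beta^{(1)} = 0$ yields
\[
f(r_\beta^{(1)} r_{\alpha\beta}^{(2)}) g(r_\beta^{(2)}) + f(r_\beta^{(2)} r_{\alpha\beta}^{(2)}) g(r_\beta^{(1)}) = 0,
\]
so $-f(r_\beta^{(2)} r_{\alpha\beta}^{(2)}) g(r_\beta^{(1)}) = g(r_\beta^{(2)}) f(r_\beta^{(1)} r_{\alpha\beta}^{(2)})$. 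Substituting this back completes the identification and proves the proposition.

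The computation has no conceptual obstacle; the only delicate point is the bookkeeping of Sweedler indices — in particular noticing that applying type-II with the contraction pattern $\mathrm{id}\otimes g\otimes f$ (rather than $f\otimes g\otimes \mathrm{id}$ or similar) is what lines up with the defining identity, and then invoking skew-symmetry at exactly the right place to flip $r_\beta^{(1)} \leftrightarrow r_\beta^{(2)}$ in one of the resulting terms.
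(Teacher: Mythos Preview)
Your proof is correct and follows essentially the same approach as the paper: both expand the type-II identity componentwise, contract the second and third tensor slots against $g$ and $f$ respectively, and invoke skew-symmetry of $r_\beta$ to match one term. The only cosmetic difference is where skew-symmetry is applied---the paper uses it while computing $T_{\alpha\beta}(f\cdot T_\beta(g))$, whereas you compute that term directly and apply skew-symmetry afterward; the two are clearly equivalent.
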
 

\begin{proof}
For any $f,g \in A^{*}$ and $\alpha, \beta \in \Omega$, we have 
\begin{equation}\label{11}
T_\alpha(f) \cdot T_\beta(g) = f(r^{(2)}_\alpha) g(r^{(2)}_\beta)~ r^{(1)}_\alpha \cdot r^{(1)}_\beta.
\end{equation}
On the other hand, 
\begin{align}\label{12}
T_{\alpha \beta}\big( T_\alpha (f) \cdot g \big) &= (T_\alpha(f) \cdot g)(r^{(2)}_{\alpha \beta}) ~ r^{(1)}_{\alpha \beta} \nonumber \\
                                 &= g \big( r^{(2)}_{\alpha \beta} \cdot T_\alpha(f) \big) ~r^{(1)}_{\alpha \beta} \nonumber \\
                                 &= f(r^{(2)}_\alpha)~ g(r^{(2)}_{\alpha \beta} \cdot r^{(1)}_\alpha) ~r^{(1)}_{\alpha \beta},
\end{align}
and 
\begin{align}\label{13}
T_{\alpha \beta}(f \cdot T_\beta(g)) &= - g(r^{(1)}_\beta) ~ T_{\alpha \beta}(f \cdot r^{(2)}_\beta) \qquad (\text{using skew-symmetry of } r_\beta) \nonumber \\
                               &= - g(r^{(1)}_\beta) ~(f \cdot r^{(2)}_\beta)(r^{(2)}_{\alpha \beta}) ~r^{(1)}_{\alpha \beta} \nonumber \\
                               &= - g(r^{(1)}_\beta)~ f(r^{(2)}_\beta \cdot r^{(2)}_{\alpha \beta})~ r^{(1)}_{\alpha \beta}.
\end{align}
Since $\lbrace r_\alpha \rbrace_{\alpha \in \Omega}$ is an AYBF type-II, we have from condition (\ref{9}) that 
\begin{equation}\label{14}
r^{(1)}_\alpha \cdot r^{(1)}_\beta \otimes r^{(2)}_\beta \otimes r^{(2)}_\alpha - r^{(1)}_{\alpha \beta} \otimes r^{(2)}_{\alpha \beta} \cdot r^{(1)}_\alpha \otimes r^{(2)}_\alpha + r^{(1)}_{\alpha \beta} \otimes r^{(1)}_\beta \otimes r^{(2)}_\beta \cdot r^{(2)}_{\alpha \beta} = 0,\text{ for } \alpha, \beta \in \Omega.
\end{equation}
We define a linear map $(g \otimes f)^\sharp : A \otimes A \otimes A \rightarrow A$ by $(g \otimes f)^\sharp (a \otimes b \otimes c) = g(b) f(c) a.$
We apply this linear map to the both sides of (\ref{14}) and get 
$$g(r^{(2)}_\beta) f(r^{(2)}_\alpha) ~r^{(1)}_\alpha \cdot r^{(1)}_\beta ~ - ~ g(r^{(2)}_{\alpha \beta} \cdot r^{(1)}_\alpha) f(r^{(2)}_\alpha) ~ r^{(1)}_{\alpha \beta} ~+~ g(r^{(1)}_\beta) f(r^{(2)}_\beta  \cdot r^{(2)}_{\alpha \beta})~ r^{(1)}_{\alpha \beta} =0.$$
Thus, it follows from (\ref{11}), (\ref{12}) and (\ref{13}) that the collection $\lbrace T_\alpha \rbrace_{\alpha \in \Omega}$ is an $\mathcal{O}$-operator family.
\end{proof}


\section{Twisted $\mathcal{O}$-operator families and NS-family algebras}\label{sec-3}
In this section, we introduce twisted $\mathcal{O}$-operator family as a generalization of $\mathcal{O}$-operator family modified by a Hochschild $2$-cocycle. We also introduce NS-family algebras as a generalization of dendriform family algebras. NS-family structures are the underlying structure of twisted $\mathcal{O}$-operator families.

\subsection{Twisted $\mathcal{O}$-operator families}
Let $A$ be an associative algebra and $M$ be an $A$-bimodule. Suppose $H$ is a (Hochschild) $2$-cocycle of $A$ with values in the $A$-bimodule $M$, i.e. $H : A^{\otimes 2} \rightarrow M$ is a bilinear map satisfying (\ref{2-co-id}).

\begin{defn}\label{defn-tttt}
An {\bf $H$-twisted $\mathcal{O}$-operator family} on $M$ over the algebra $A$ (or simply an $H$-twisted $\mathcal{O}$-operator family) is a collection $\{ T_\alpha : M \rightarrow A \}_{\alpha \in \Omega}$ of linear maps satisfying
\begin{align}\label{twisted-o-fam}
T_\alpha  (u) \cdot T_\beta (v) = T_{\alpha \beta} \big( T_\alpha (u) \cdot v + u \cdot T_\beta (v) + H (T_\alpha (u), T_\beta (v))   \big),
\end{align}
for all $u, v \in M$ and $\alpha, \beta \in \Omega$.

When $M = A$ with the adjoint $A$-bimodule structure, an $H$-twisted $\mathcal{O}$-operator family is called an {\bf $H$-twisted Rota-Baxter family}. Further, if $H = 0$, we recover the notion of Rota-Baxter family.
\end{defn}

\begin{defn}\label{tw-o-map}
Let $A'$ be another associative algebra, $M'$ be an $A'$-bimodule and $H' : {A'}^{\otimes 2} \rightarrow M'$ be a $2$-cocycle. Suppose $\{ T'_\alpha : M' \rightarrow A' \}_{ \alpha \in \Omega}$ is an $H'$-twisted $\mathcal{O}$-operator family.
A {\bf morphism} of twisted $\mathcal{O}$-operator families from $\{ T_\alpha : M \rightarrow A \}_{ \alpha \in \Omega}$ to $\{ T'_\alpha : M' \rightarrow A' \}_{ \alpha \in \Omega}$ consists of a pair $(\phi, \psi)$ of an algebra homomorphism $\phi : A \rightarrow A'$ and a linear map $\psi : M \rightarrow M'$ satisfying
\begin{align*}
\psi (a \cdot u) =&~ \phi (a) \cdot' \psi (u), ~~~~~ \psi (u \cdot a) = \psi (u) \cdot' \phi (a), ~~~~~ \phi \circ T_\alpha = T'_\alpha \circ \psi ~~~ \text{ and } \\
&\psi \circ H = H' \circ (\phi \otimes \phi), ~ \text{ for } a \in A, u \in M \text{ and } \alpha \in \Omega.
\end{align*}
\end{defn}

Let {\bf TwOoperf} be the category of twisted $\mathcal{O}$-operator families and morphisms between them. Note that the category {\bf Ooperf} of $\mathcal{O}$-operator families is a subcategory of {\bf TwOoperf}.

\begin{exam}
Any $\mathcal{O}$-operator family is an $H$-twisted $\mathcal{O}$-operator family, for $H=0$. Therefore, twisted $\mathcal{O}$-operator family is a generalized notion of $\mathcal{O}$-operator family.
\end{exam}

\begin{exam}
Let $A$ be an associative algebra. Then we have seen that the space $A \otimes {\bf k} \Omega$ carries an associative algebra structure with the multiplication given by (\ref{a-tensor}).
There is also an $(A \otimes {\bf k} \Omega)$-bimodule structure on $A$ with the left and right $(A \otimes {\bf k} \Omega)$-actions (denoted by the same notation)
\begin{align*}
(a \otimes \alpha) \cdot b = a \cdot b ~~ \text{ and } ~~ b \cdot (a \otimes \alpha) = b \cdot a,
\end{align*}
for $a \otimes \alpha \in A \otimes {\bf k} \Omega$ and $b \in A$. Then the map $H : (A \otimes {\bf k} \Omega) \otimes (A \otimes {\bf k} \Omega) \rightarrow A$ given by $H (a \otimes \alpha, b \otimes \beta) = - a \cdot b$ is a Hochschild $2$-cocycle. With these notations, the collection $\{ \mathrm{Id}_\alpha : A \rightarrow A \otimes {\bf k} \Omega \}_{\alpha \in \Omega}$ defined by $\mathrm{Id}_\alpha (a) = a \otimes \alpha$ for $a \in A$, is an $H$-twisted $\mathcal{O}$-operator family.
\end{exam}

The next example generalizes the previous one using the Nijenhuis family.

\begin{exam}
Let $A$ be an associative algebra and $\{ N_\alpha : A \rightarrow A \}_{\alpha \in \Omega}$ be a Nijenhuis family on it. Then $A \otimes {\bf k} \Omega$ carries an associative algebra structure with the multiplication given by
\begin{align*}
(a \otimes \alpha) \bullet_N (b \otimes \beta) = N_\alpha (a) \cdot b \otimes \alpha \beta + a \cdot N_\beta (b) \otimes \alpha \beta - N_{\alpha \beta} (a \cdot b) \otimes \alpha \beta.
\end{align*}
We denote this associative algebra simply by $(A \otimes {\bf k} \Omega)_{N}$. There is an $(A \otimes {\bf k} \Omega)_{N}$-bimodule structure on $A$ with the the left and right $(A \otimes {\bf k} \Omega)_{N}$-actions given by (denoted by the same notation)
\begin{align*}
(a \otimes \alpha ) \cdot b = N_\alpha (a) \cdot b ~~~ \text{ and } ~~~ b \cdot (a \otimes \alpha) = b \cdot N_\alpha (a),
\end{align*}
for $a \otimes \alpha \in (A \otimes {\bf k} \Omega)_{N}$ and $b \in A$. Moreover, the map $H : (A \otimes {\bf k} \Omega)_{N} \otimes (A \otimes {\bf k} \Omega)_{N} \rightarrow A$ given by $H (a \otimes \alpha, b \otimes \beta) = - N_{\alpha \beta} (a \cdot b)$ is a Hochschild $2$-cocycle. Then it is easy to verify that the collection $\{ \mathrm{Id}_\alpha : A \rightarrow (A \otimes {\bf k} \Omega)_{N} \}_{\alpha \in \Omega}$ of maps is an $H$-twisted $\mathcal{O}$-operator family.
\end{exam}

In the next result, we characterize an $H$-twisted $\mathcal{O}$-operator family by its graph. In particular, we get a characterization of $\mathcal{O}$-operator family. We first need the following.

\medskip

$\diamond$ Given a $2$-cocycle $H$, the direct sum $A \oplus M$ carries an associative algebra with the product $\star_H$ given by
\begin{align*}
(a,u) \star_H (b, v) = (a \cdot b ,~ a \cdot v + u \cdot b + H (a, b)),
\end{align*}
for $(a,u), (b, v) \in A \oplus M$. This is called the $H$-twisted semidirect product algebra, denoted by $A \ltimes_H M$. In particular, when $H =0$, we get the standard semidirect product algebra $A \ltimes M.$

\medskip

$\diamond$ Let $A$ be an associative algebra. A collection $\lbrace B_\alpha \rbrace _{\alpha \in \Omega}$ of subspaces of $A$ is said to be a {\bf subalgebra family} of $A$ if the associative multiplication satisfies $B_\alpha \cdot B_\beta \subset B_{\alpha \beta}$, for $\alpha, \beta \in \Omega$.

\medskip

\begin{prop}
Let $A$ be an associative algebra, $M$ be an $A$-bimodule and $H$ be a $2$-cocycle. Then a collection $\{ T_\alpha : M \rightarrow A \}_{\alpha \in \Omega}$ of linear maps is an $H$-twisted $\mathcal{O}$-operator family if and only if the collection of graphs $\{ Gr (T_\alpha) \}_{\alpha \in \Omega}$ is a subalgebra family of the $H$-twisted semidirect product algebra $A \ltimes_H M$.
\end{prop}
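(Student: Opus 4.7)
The plan is to argue by direct computation on elements of the graphs, using the explicit formula for the twisted semidirect product $\star_H$. Concretely, an arbitrary element of $\mathrm{Gr}(T_\alpha)$ has the form $(T_\alpha(u), u)$ for some $u \in M$, and similarly for $\mathrm{Gr}(T_\beta)$. First I would compute
\begin{align*}
(T_\alpha(u), u) \star_H (T_\beta(v), v) = \bigl( T_\alpha(u) \cdot T_\beta(v),\; T_\alpha(u) \cdot v + u \cdot T_\beta(v) + H(T_\alpha(u), T_\beta(v)) \bigr),
\end{align*}
directly from the definition of $\star_H$ recalled just above the statement.

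Next I would observe that a pair $(a, w) \in A \oplus M$ belongs to $\mathrm{Gr}(T_{\alpha\beta})$ precisely when $a = T_{\alpha\beta}(w)$. Applying this criterion to the element computed above, the containment $\mathrm{Gr}(T_\alpha) \cdot \mathrm{Gr}(T_\beta) \subset \mathrm{Gr}(T_{\alpha\beta})$ is equivalent to the identity
\begin{align*}
T_\alpha(u) \cdot T_\beta(v) = T_{\alpha\beta}\bigl( T_\alpha(u) \cdot v + u \cdot T_\beta(v) + H(T_\alpha(u), T_\beta(v)) \bigr)
\end{align*}
holding for all $u, v \in M$. This is exactly the defining relation (\ref{twisted-o-fam}) of an $H$-twisted $\mathcal{O}$-operator family for the pair $(\alpha,\beta)$. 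Since the computation is reversible and quantifies over all $u, v$ and $\alpha, \beta \in \Omega$, both implications follow simultaneously.

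There is no substantive obstacle: the entire argument is a one-line computation combined with the characterization of the graph. The only mild care needed is to verify that $\{\mathrm{Gr}(T_\alpha)\}_{\alpha \in \Omega}$ is a well-defined collection of subspaces (each $\mathrm{Gr}(T_\alpha)$ is automatically a subspace because $T_\alpha$ is linear), so the subalgebra-family condition reduces precisely to the containment $\mathrm{Gr}(T_\alpha) \star_H \mathrm{Gr}(T_\beta) \subset \mathrm{Gr}(T_{\alpha\beta})$ for every $\alpha, \beta \in \Omega$, matching the definition given just before the proposition.
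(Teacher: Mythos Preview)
Your proposal is correct and follows essentially the same approach as the paper's proof: compute the $\star_H$-product of two generic graph elements and observe that membership in $\mathrm{Gr}(T_{\alpha\beta})$ is precisely the identity (\ref{twisted-o-fam}). If anything, your version is slightly more explicit in noting that each graph is automatically a subspace and that the argument is reversible.
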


\begin{proof}
For any $u, v \in M$ and $\alpha, \beta \in \Omega$, consider the elements $(T_\alpha (u), u) \in Gr(T_\alpha)$ and $(T_\beta(v), v) \in Gr(T_\beta)$. Their product in $A \ltimes_H M$ is given by 
$$(T_\alpha(u), u) \star_H (T_\beta (v), v) = \big( T_\alpha(u) \cdot T_\beta(v),~ T_\alpha(u) \cdot v + u \cdot T_\beta(v) + H (T_\alpha (u), T_\beta (v)) \big).$$
This is in $Gr(T_{\alpha \beta})$ if and only if (\ref{twisted-o-fam}) holds. In other words, the collection $\lbrace T_\alpha \rbrace_{\alpha \in \Omega}$ is an $H$-twisted $\mathcal{O}$-operator family.
\end{proof}

In \cite{reyn} O. Reynolds considered a linear operator (known as Reynolds operator) on an algebra in the study of fluctuation theory in fluid dynamics. Reynolds operators are closely related to averaging operators. In \cite{U08} K. Uchino observed that Reynolds operators can be seen as twisted Rota-Baxter operators. In the following, we consider the notion of the Reynolds family and show that any Reynolds family can be realized as a twisted Rota-Baxter family.

\begin{defn}
Let $A$ be an associative algebra. A {\bf Reynolds family} on $A$ consists of a collection $\{ R_\alpha : A \rightarrow A \}_{\alpha \in \Omega}$ of linear maps satisfying
\begin{align}\label{rey-fam}
R_\alpha (a) \cdot R_\beta (b) = R_{\alpha \beta} \big(    R_\alpha (a) \cdot b + a \cdot R_\beta (b) - R_\alpha (a) \cdot R_\beta (b ) \big), \text{ for } a, b \in A \text{ and } \alpha, \beta \in \Omega.
\end{align}
\end{defn}

\begin{remark}
Let $A$ be an associative algebra. Let $\mu : A^{\otimes 2} \rightarrow A$ denotes the associative multiplication map on $A$, i.e., $\mu (a, b) = a \cdot b$, for $a, b \in A$. Then $\mu$ is a Hochschild $2$-cocycle of $A$ with values in the adjoint $A$-bimodule. With this notation, a Reynolds family $\{ R_\alpha : A \rightarrow A \}_{\alpha \in \Omega}$ on the algebra $A$ is simply a $(-\mu)$-twisted Rota-Baxter family.
\end{remark}

Ler $\{ R_\alpha : A \rightarrow A \}_{\alpha \in \Omega}$ be an invertible Reynolds family. Then the identity (\ref{rey-fam}) can be equivalently written as
\begin{align*}
R_{\alpha \beta}^{-1} (a \cdot b) = R_\alpha^{-1} (a) \cdot b ~+~ a \cdot R_\beta^{-1} (b) - a \cdot b,
\end{align*}
for $a, b \in A$, $\alpha, \beta \in \Omega$, which is further equivalent to
\begin{align*}
(R_{\alpha \beta}^{-1} - \mathrm{Id}) (a \cdot b) = (R_{\alpha }^{-1} - \mathrm{Id})(a) \cdot b ~+~ a \cdot (R_{ \beta}^{-1} - \mathrm{Id}) (b).
\end{align*}
This shows that the collection $\{ R_\alpha^{-1} - \mathrm{Id} \}_{\alpha \in \Omega}$ of maps is a derivation family on $A$ (with values in the adjoint $A$-bimodule). Conversely, let $\{ D_\alpha \}_{\alpha \in \Omega}$ be a derivation family on $A$ such that the linear map $D_\alpha + \mathrm{Id}$ is invertible, for all $\alpha \in \Omega$. Then $\{ (D_\alpha + \mathrm{Id})^{-1} \}_{\alpha \in \Omega} $ is an (invertible) Reynolds family. In general, if $D_\alpha + \mathrm{Id}$ is not invertible, then we have the following.

\begin{prop}
Let $A$ be an associative algebra and $\{ D_\alpha \}_{\alpha \in \Omega}$ be a derivation family on $A$. Suppose, for each $a \in A$ and $\alpha \in \Omega$, the infinite sum $\sum_{n=0}^\infty (-1)^n D_\alpha^n (a)$ converges to an element in $A$. Then 
\begin{align*}
\{ R_\alpha = \sum_{n=0}^\infty (-1)^n D_\alpha^n \}_{\alpha \in \Omega} ~\text{ is a Reynolds family.}
\end{align*}
\end{prop}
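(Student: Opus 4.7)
The plan is to exploit the correspondence, noted just before the statement, between Reynolds families and ``derivation family $+\,\mathrm{Id}$'' under inversion; the only new issue is to handle the non-invertible case via a geometric-series argument. Concretely, I would first verify the operator identity $R_\alpha \circ (\mathrm{Id} + D_\alpha) = \mathrm{Id} = (\mathrm{Id} + D_\alpha) \circ R_\alpha$ pointwise on $A$. This is a standard telescoping: setting $S_N(a) := \sum_{n=0}^N (-1)^n D_\alpha^n(a)$, a direct term-by-term cancellation gives
\[
S_N(a) + D_\alpha(S_N(a)) \;=\; a + (-1)^N D_\alpha^{N+1}(a),
\]
whose right-hand side tends to $a$ as $N \to \infty$, since convergence of $\sum_n (-1)^n D_\alpha^n(a)$ forces $(-1)^N D_\alpha^{N+1}(a) \to 0$. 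Hence $R_\alpha(a) + D_\alpha(R_\alpha(a)) = a$, and a symmetric telescoping yields $R_\alpha(a + D_\alpha(a)) = a$ as well.

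Having secured $R_\gamma(x + D_\gamma(x)) = x$ for every $\gamma \in \Omega$, I would set $x = R_\alpha(a)$ and $y = R_\beta(b)$, so that $a = x + D_\alpha(x)$ and $b = y + D_\beta(y)$. Substituting into the Reynolds combination and cancelling gives
\[
R_\alpha(a) \cdot b + a \cdot R_\beta(b) - R_\alpha(a) \cdot R_\beta(b) \;=\; x \cdot y + x \cdot D_\beta(y) + D_\alpha(x) \cdot y \;=\; x \cdot y + D_{\alpha\beta}(x \cdot y),
\]
where the last equality is precisely the derivation family identity (\ref{1}) applied to the product $x \cdot y$. Applying $R_{\alpha\beta}$ to both sides and using the inverse relation of the previous paragraph with $\gamma = \alpha\beta$ and $z = x \cdot y$, the right-hand side collapses to $x \cdot y = R_\alpha(a) \cdot R_\beta(b)$, which is exactly the Reynolds family identity (\ref{rey-fam}).

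The main obstacle is the interpretation of the convergence hypothesis, which is stated without reference to a topology on $A$. Both the telescoping step and the step in which $R_{\alpha\beta}$ is applied to $x \cdot y + D_{\alpha\beta}(x \cdot y)$ implicitly rely on continuity of each $D_\alpha$ and of the multiplication, together with the fact that the intermediate element $x \cdot y + D_{\alpha\beta}(x \cdot y)$ itself lies in the domain where the defining series for $R_{\alpha\beta}$ converges. In the natural settings (for instance when $A$ is complete with respect to a filtration preserved by every $D_\alpha$, or when each $D_\alpha$ is locally nilpotent so that the series actually truncate) these points are automatic, and the algebraic core of the argument is the short calculation displayed above.
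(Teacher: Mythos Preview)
Your argument is correct (modulo the topological caveats you already flag, which the paper's own proof shares) and it is genuinely different from the route taken in the paper. The paper proceeds by brute expansion: it writes each of $R_\alpha(a)\cdot R_\beta(b)$, $R_{\alpha\beta}(R_\alpha(a)\cdot b)$, $R_{\alpha\beta}(a\cdot R_\beta(b))$ and $R_{\alpha\beta}(R_\alpha(a)\cdot R_\beta(b))$ as double or triple sums using an iterated Leibniz rule $D_{\alpha\beta}^k(D_\alpha^m(a)\cdot D_\beta^n(b))=\sum_{i+j=k}\binom{k}{i}D_\alpha^{m+i}(a)\cdot D_\beta^{n+j}(b)$, and then reduces the Reynolds identity to the combinatorial identity
\[
1 \;=\; \sum_{i=0}^{p}\binom{i+q}{i}\;+\;\sum_{j=0}^{q}\binom{p+j}{j}\;-\;\sum_{i=0}^{p}\sum_{j=0}^{q}\binom{i+j}{i},
\]
checked by induction on $p+q$. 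Your approach bypasses all of this combinatorics by first establishing the two-sided inverse relation $R_\gamma\circ(\mathrm{Id}+D_\gamma)=\mathrm{Id}=(\mathrm{Id}+D_\gamma)\circ R_\gamma$ via a one-line telescoping, and then invoking directly the correspondence between Reynolds families and ``derivation family $+\,\mathrm{Id}$'' that the paper records just before the proposition. What you gain is a much shorter and more conceptual argument; what the paper's computation buys is that it never needs to appeal to that correspondence and makes the dependence on the iterated Leibniz formula for $D_{\alpha\beta}$ explicit. Both proofs implicitly use the same continuity assumptions on $D_\alpha$ and on the multiplication when manipulating the infinite sums.
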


\begin{proof}
For any $a, b \in A$ and $\alpha, \beta \in \Omega$, we have
\begin{align}\label{rey-pr1}
R_\alpha (a) \cdot R_\beta (b) = \sum_{m, n \geq 0} (-1)^{m+n} ~ D^m_\alpha (a) \cdot D^n_\beta (b). 
\end{align}
Hence 
\begin{align}\label{rey-pr2}
R_{\alpha \beta} \big(  R_\alpha (a) \cdot R_\beta (b)  \big) =~& \sum_{m, n , k \geq 0} (-1)^{m+n+k} ~ D^k_{\alpha \beta} \big( D^m_\alpha (a) \cdot D^n_\beta (b)   \big) \nonumber \\
=~& \sum_{m, n, k \geq 0} ~ \sum_{i+j = k} (-1)^{m+n+k} ~ {k \choose i} ~D^{m+i}_\alpha (a) \cdot D^{n+j}_\beta (b) \nonumber \\
=~& \sum_{m,n,i, j \geq 0} (-1)^{m+n+i+j} ~ {i+j \choose i} ~D^{m+i}_\alpha (a) \cdot D^{n+j}_\beta (b) \nonumber \\
=~& \sum_{p, q \geq 0} (-1)^{p+q} ~\big(  \sum_{i=0}^p \sum_{j=0}^q {i+j \choose i}  \big) ~ D^p_\alpha (a) \cdot D^q_\beta (b).
\end{align}
On the other hand, we have
\begin{align}\label{rey-pr3}
R_{\alpha \beta} \big( R_\alpha (a) \cdot b   \big) =~& \sum_{m, k \geq 0} \sum_{i+j = k} (-1)^{m+k} ~ {k \choose i} ~ D^{m+i}_\alpha (a) \cdot D_\beta^j (b) \nonumber \\
=~& \sum_{l \geq 0} (-1)^l \sum_{p+j = l} \sum_{m+i= p} {i+j \choose i}~ D^p_\alpha (a) \cdot D^j_\beta (b) \nonumber \\
=~& \sum_{p, j \geq 0} (-1)^{p+j} \sum_{i=0}^p {i+j \choose i}~ D^p_\alpha (a) \cdot D^j_\beta (b).
\end{align}
Similarly,
\begin{align}\label{rey-pr4}
R_{\alpha \beta} \big(  a \cdot R_\beta (b) \big) = \sum_{q, i \geq 0} (-1)^{q+i} \sum_{j=0}^q {i+j \choose j} ~ D^i_\alpha (a) \cdot D^q_\beta (b).
\end{align}
With the values (\ref{rey-pr1})-(\ref{rey-pr4}), we have the identity (\ref{rey-fam}) holds if and only if the coefficients of $D^p_\alpha (a) \cdot D^q_\beta (b)$ in both sides are equal. In other words, $\{ R_\alpha \}_{\alpha \in \Omega}$ is a Reynolds family if and only if
\begin{align*}
(-1)^{p+q} = (-1)^{p+q} \big\{  \sum_{i=0}^p {i+q \choose i} ~+~ \sum_{j=0}^q {p+j \choose j} ~-~ \sum_{i=0}^p \sum_{j=0}^q {i+j \choose i} \big\},\text{ for all } p, q \geq 0. 
\end{align*}
The above identity can be easily verified by the induction on $k = p+q$ (see \cite{guo-pei}). Hence $\{ R_\alpha \}_{\alpha \in \Omega}$ is a Reynolds family.
\end{proof}

Let $A$ be an associative algebra, $M$ be an $A$-bimodule and $H$ be a $2$-cocycle. Consider the associative algebra $A \otimes {\bf k} \Omega$ and the $(A \otimes {\bf k} \Omega)$-bimodule $M \otimes {\bf k } \Omega$ given in (\ref{a-tensor}) and (\ref{m-tensor}). Then the map 
\begin{align*}
\widehat{H}: (A \otimes {\bf k} \Omega)^{\otimes 2} \rightarrow M \otimes {\bf k} \Omega, ~~ \widehat{H} \big( a \otimes \alpha, b \otimes \beta  \big) = H(a, b) \otimes \alpha \beta
\end{align*}
is a $2$-cocycle on $A \otimes {\bf k} \Omega$ with values in $M \otimes {\bf k} \Omega$. The next result shows that a twisted $\mathcal{O}$-operator family induces a twisted $\mathcal{O}$-operator.

\begin{prop}\label{of-o}
Let $\{ T_\alpha : M \rightarrow A \}_{\alpha \in \Omega}$ be an $H$-twisted $\mathcal{O}$-operator family. Then the map
\begin{align*}
T : M \otimes {\bf k} \Omega \rightarrow A \otimes {\bf k} \Omega ~ \text{ given by } ~ T (u \otimes \alpha) = T_\alpha (u) \otimes \alpha
\end{align*}
is an $\widehat{H}$-twisted $\mathcal{O}$-operator (on $M \otimes {\bf k} \Omega$ over the algebra $A \otimes {\bf k} \Omega$).
\end{prop}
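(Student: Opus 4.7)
The plan is to verify directly that $T$ satisfies the $\widehat{H}$-twisted $\mathcal{O}$-operator identity on generators $u \otimes \alpha, v \otimes \beta$ of $M \otimes {\bf k}\Omega$ and then extend by bilinearity. The argument is essentially bookkeeping, designed so that the semigroup indices are tracked correctly and the family identity (\ref{twisted-o-fam}) can be invoked at the very end.

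First, I would unpack the left-hand side using the definition of $T$ and the product (\ref{a-tensor}) on $A \otimes {\bf k}\Omega$:
\[
T(u \otimes \alpha) \bullet T(v \otimes \beta)
= (T_\alpha(u) \otimes \alpha) \bullet (T_\beta(v) \otimes \beta)
= T_\alpha(u) \cdot T_\beta(v) \otimes \alpha\beta.
\]
Next, I would expand the three terms contributing to the argument of $T$ on the right-hand side using the bimodule structure (\ref{m-tensor}) and the definition of $\widehat{H}$:
\[
T(u \otimes \alpha) \bullet (v \otimes \beta) = T_\alpha(u) \cdot v \otimes \alpha\beta,
\]
\[
(u \otimes \alpha) \bullet T(v \otimes \beta) = u \cdot T_\beta(v) \otimes \alpha\beta,
\]
\[
\widehat{H}\bigl(T(u \otimes \alpha),\, T(v \otimes \beta)\bigr) = H\bigl(T_\alpha(u), T_\beta(v)\bigr) \otimes \alpha\beta.
\]
Crucially, all three terms carry the same semigroup tag $\alpha\beta$, so their sum is a single elementary tensor in $M \otimes {\bf k}\Omega$ with second factor $\alpha\beta$. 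Applying $T$ therefore feeds the $M$-part into the family member $T_{\alpha\beta}$, producing $T_{\alpha\beta}\bigl(T_\alpha(u) \cdot v + u \cdot T_\beta(v) + H(T_\alpha(u), T_\beta(v))\bigr) \otimes \alpha\beta$.

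Equating the two sides now reduces exactly to the $H$-twisted family identity (\ref{twisted-o-fam}) applied inside the first tensor factor, with the common second factor $\alpha\beta$ matching automatically. There is no real obstacle here; the only subtle point is that the semigroup index attached by $\widehat{H}$ to its output must coincide with the one that $T$ uses when evaluating, which is precisely why $\widehat{H}$ was defined to send $(a \otimes \alpha, b \otimes \beta)$ to $H(a,b) \otimes \alpha\beta$. As a byproduct, specializing $H = 0$ in the calculation above proves the earlier statement (whose proof was deferred) that an $\mathcal{O}$-operator family $\{T_\alpha\}_{\alpha \in \Omega}$ induces an ordinary $\mathcal{O}$-operator $T$ on $M \otimes {\bf k}\Omega$ over $A \otimes {\bf k}\Omega$.
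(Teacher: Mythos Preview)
Your proof is correct and follows essentially the same direct verification as the paper: compute both sides on elementary tensors $u\otimes\alpha$, $v\otimes\beta$, observe that all terms carry the common tag $\alpha\beta$, and reduce to the family identity (\ref{twisted-o-fam}). Your remark that the case $H=0$ recovers the deferred proof for ordinary $\mathcal{O}$-operator families is exactly how the paper intended that earlier proposition to be handled.
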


\begin{proof}
For any $u \otimes \alpha, v \otimes \beta \in M \otimes {\bf k} \Omega$, we have
\begin{align*}
&T (u \otimes \alpha) \bullet T (v \otimes \beta) \\
&= (T_\alpha (u) \otimes \alpha) \bullet (T_\beta (v) \otimes \beta) \\
&= \big( T_\alpha (u) \cdot T_\beta (v) \big) \otimes \alpha \beta \\
&= T_{\alpha \beta } \big(  T_\alpha (u) \cdot v ~+~ u \cdot T_\beta (v)  ~+~ H (T_\alpha (u), T_\beta (v))  \big)   \otimes \alpha \beta  \\
&= T \big(  ( T_\alpha (u) \cdot v ~+~ u \cdot T_\beta (v)  ~+~ H (T_\alpha (u), T_\beta (v))  )   \otimes \alpha \beta \big) \\
&= T \big(  (T_\alpha (u) \otimes \alpha) \bullet (v \otimes \beta)  ~+~ (u \otimes \alpha) \bullet (T_\beta (v) \otimes \beta) ~+~ H (T_\alpha (u), T_\beta (v)) \otimes \alpha \beta \big) \\
&= T \big(  T(u \otimes \alpha) \bullet (v \otimes \beta) ~+~ (u \otimes \alpha) \bullet T(v \otimes \beta) ~+~ \widehat{H} (T (u \otimes \alpha), T(v \otimes \beta) )  \big).
\end{align*}
This shows that $T$ is an $\widehat{H}$-twisted $\mathcal{O}$-operator.
\end{proof}

\subsection{NS-family algebras}
In this subsection, we introduce NS-family algebras as the family analogue of NS-algebras. This notion of NS-family algebra generalizes dendriform family algebras. We observed that an NS-family algebra induces an ordinary NS-algebra. Finally, we find the relations between twisted $\mathcal{O}$-operator families and NS-family algebras. 

\begin{defn} \cite{leroux}
An {\bf NS-algebra} is a quadruple $(D, \prec, \succ, \curlyvee)$ consists of a vector space $D$ together with three bilinear operations $\prec, \succ, \curlyvee : D \otimes D \rightarrow D$ satisfying for $x, y, z \in D$,
\begin{align*}
(x \prec y) \prec z =~& x \prec ( y \prec z + y \succ z + y \curlyvee z),\\
(x \succ y) \prec z =~& x \succ (y \prec z),\\
(x \prec y + x \succ y + x \curlyvee y) \succ z =~& x \succ (y \succ z),\\
( x \prec y + x \succ y + x \curlyvee y ) \curlyvee z  ~+~ (x \curlyvee y) \prec z =~& x \succ (y \curlyvee z) ~+~ x \curlyvee ( y \prec z + y \succ z + y \curlyvee z ).
\end{align*}
\end{defn}

Let $(D, \prec, \succ, \curlyvee)$ be an NS-algebra. Then $(D, \ast)$ is an associative algebra, where $$x \ast y = x \prec y + x \succ y + x \curlyvee y, \text{ for } x, y \in D.$$

Any dendriform algebra $(D, \prec, \succ)$ is obviously an NS-algebra with $\curlyvee = 0$.
The family version of NS-algebra is given by the following.

\begin{defn}\label{defn-ns-fam}
An {\bf NS-family algebra} is a vector space $D$ together with a family of bilinear operations $\{ \prec_\alpha, \succ_\alpha, \curlyvee_{\alpha, \beta} : D \otimes D \rightarrow D \}_{\alpha, \beta \in \Omega}$ satisfying
\begin{align}
(x \prec_\alpha y) \prec_\beta z =~& x \prec_{\alpha \beta} ( y \prec_\beta z + y \succ_\alpha z + y \curlyvee_{\alpha, \beta} z), \label{ns-fam1}\\
(x \succ_\alpha y) \prec_\beta z =~& x \succ_\alpha (y \prec_\beta z), \label{ns-fam2} \\
(x \prec_\beta y + x \succ_\alpha y + x \curlyvee_{\alpha, \beta} y) \succ_{\alpha \beta} z =~& x \succ_\alpha (y \succ_\beta z), \label{ns-fam3} \\
( x \prec_\beta y + x \succ_\alpha y + x \curlyvee_{\alpha, \beta} y ) \curlyvee_{\alpha \beta, \gamma} z  ~+~& (x \curlyvee_{\alpha, \beta} y) \prec_\gamma z \label{ns-fam4} \\
=~& x \succ_\alpha (y \curlyvee_{\beta, \gamma} z) ~+~ x \curlyvee_{\alpha, \beta \gamma} ( y \prec_\gamma z + y \succ_\beta z + y \curlyvee_{\beta, \gamma} z ),  \nonumber
\end{align}
for $x, y, z \in D$ and $\alpha, \beta, \gamma \in \Omega$.
\end{defn}

Let $(D, \{ \prec_\alpha, \succ_\alpha, \curlyvee_{\alpha, \beta} \}_{\alpha, \beta \in \Omega})$ and $(D', \{ \prec'_\alpha, \succ'_\alpha, \curlyvee'_{\alpha, \beta} \}_{\alpha, \beta \in \Omega})$ be two NS-family algebras. A {\bf morphism} between them is given by a linear map $f : D \rightarrow D'$ satisfying
\begin{align*}
f (x& \prec_\alpha y) = f(x) \prec'_\alpha f(y), ~~~~~ f (x \succ_\alpha y) = f(x) \succ'_\alpha f(y) ~~~ \text{ and } \\
&f (x \curlyvee_{\alpha, \beta} y) = f(x) \curlyvee'_{\alpha, \beta} f(y), ~ \text{ for } x, y \in D ~ \text{ and } \alpha, \beta \in \Omega.
\end{align*}

We denote the category of NS-family algebras and morphisms between them by {\bf NSf}. The next result finds the link between NS-family algebras and ordinary NS-algebras. This generalizes Proposition \ref{see} in the context of NS-family algebras.

\begin{thm}\label{ns-fam-ns}
Let $(D, \{ \prec_\alpha, \succ_\alpha, \curlyvee_{\alpha, \beta} \}_{\alpha, \beta \in \Omega})$ be an NS-family algebra. Then $(D \otimes {\bf k} \Omega, \prec, \succ, \curlyvee)$ is an NS-algebra, where
\begin{align*}
(x \otimes& \alpha)   \prec (y \otimes \beta) := (x \prec_\beta y ) \otimes \alpha \beta, \qquad
(x \otimes \alpha)   \succ (y \otimes \beta) := (x \succ_\alpha y) \otimes \alpha \beta, \\
&(x \otimes \alpha)   \curlyvee (y \otimes \beta) := (x \curlyvee_{\alpha, \beta} y) \otimes \alpha \beta, ~\text{ for } x \otimes \alpha,~ y \otimes \beta \in D \otimes {\bf k} \Omega.
\end{align*}

Further, if $f : D \rightarrow D'$ is a morphism between NS-family algebras from $(D, \{ \prec_\alpha, \succ_\alpha, \curlyvee_{\alpha, \beta} \}_{\alpha, \beta \in \Omega})$ to $(D', \{ \prec'_\alpha, \succ'_\alpha, \curlyvee'_{\alpha, \beta} \}_{\alpha, \beta \in \Omega})$, then the map $F : D \otimes {\bf k} \Omega \rightarrow D' \otimes {\bf k} \Omega$, $F (x \otimes \alpha) = f(x) \otimes \alpha$ is a morphism between induced NS-algebras.
\end{thm}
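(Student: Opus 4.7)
The plan is to verify the four NS-algebra axioms for $(D \otimes {\bf k}\Omega, \prec, \succ, \curlyvee)$ by direct computation on elementary tensors $x \otimes \alpha,\, y \otimes \beta,\, z \otimes \gamma$, reducing each one to the corresponding NS-family axiom (\ref{ns-fam1})--(\ref{ns-fam4}). The key mechanism is that the semigroup multiplication in ${\bf k}\Omega$ produces precisely the indices needed to invoke each family axiom on $D$; the triple product $\alpha\beta\gamma$ is unambiguous because $\Omega$ is a semigroup.

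For the first three axioms, the computation is entirely parallel to Proposition \ref{see}. For instance, expanding the left-hand side of axiom (1) yields $((x \otimes \alpha) \prec (y \otimes \beta)) \prec (z \otimes \gamma) = ((x \prec_\beta y) \prec_\gamma z) \otimes \alpha\beta\gamma$, while expanding the right-hand side, noting that $(y \otimes \beta)\prec(z \otimes \gamma) + (y \otimes \beta)\succ(z \otimes \gamma) + (y \otimes \beta)\curlyvee(z \otimes \gamma) = (y \prec_\gamma z + y \succ_\beta z + y \curlyvee_{\beta,\gamma} z) \otimes \beta\gamma$, gives $(x \prec_{\beta\gamma}(y \prec_\gamma z + y \succ_\beta z + y \curlyvee_{\beta,\gamma} z)) \otimes \alpha\beta\gamma$. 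The two agree by (\ref{ns-fam1}) with the index pair $(\beta, \gamma)$ in place of $(\alpha, \beta)$. Axioms (2) and (3) reduce in the same way to (\ref{ns-fam2}) and (\ref{ns-fam3}), with the key subtlety that $\succ$ pulls its subscript from the \emph{left} factor while $\prec$ pulls from the \emph{right}, which matches the asymmetry already present in those family axioms.

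The fourth axiom is the main bookkeeping task, involving two $\curlyvee$-terms on each side and the triple product $\alpha\beta\gamma$. Once expanded, both sides carry the common factor $\otimes\,\alpha\beta\gamma$, and the identity in $D$ that one needs to verify is exactly (\ref{ns-fam4}) evaluated at $(\alpha,\beta,\gamma)$; here associativity of $\Omega$ is used to identify $(\alpha\beta)\gamma$ with $\alpha(\beta\gamma)$ on the nose. The first term of the LHS uses the pairing $(\alpha\beta, \gamma)$ in $\curlyvee_{\alpha\beta,\gamma}$, while the last term of the RHS uses $(\alpha, \beta\gamma)$ in $\curlyvee_{\alpha,\beta\gamma}$, and these indices arise automatically from the definitions of $\prec,\succ,\curlyvee$ on $D \otimes {\bf k}\Omega$. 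This is the only nontrivial place in the proof where one must be careful to read off the indices correctly.

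For the morphism statement, the map $F(x \otimes \alpha) := f(x) \otimes \alpha$ is checked to intertwine the three operations one at a time, e.g.\ $F((x \otimes \alpha) \prec (y \otimes \beta)) = f(x \prec_\beta y) \otimes \alpha\beta = (f(x) \prec'_\beta f(y)) \otimes \alpha\beta = F(x \otimes \alpha) \prec F(y \otimes \beta)$, and analogously for $\succ$ and $\curlyvee$, using the defining properties of a morphism of NS-family algebras. The only real obstacle anywhere in the argument is notational: keeping track, especially in axiom (4), of which subscript on $D$ matches which $\Omega$-product produced by the tensor factor.
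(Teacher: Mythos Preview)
Your proposal is correct and follows essentially the same approach as the paper: a direct verification of the four NS-algebra axioms on elementary tensors, each reducing to the corresponding NS-family identity (\ref{ns-fam1})--(\ref{ns-fam4}) once the common factor $\otimes\,\alpha\beta\gamma$ is pulled out, with the morphism statement checked componentwise. The paper writes out the fourth axiom in full while you summarize it, but the argument is identical.
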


\begin{proof}
For $x \otimes \alpha$, $y \otimes \beta$, $z \otimes \gamma \in D \otimes {\bf k} \Omega$, we have
\begin{align*}
&\big( (x \otimes \alpha) \prec ( y \otimes \beta)  \big) \prec (z \otimes \gamma) \\
&= \big(   (x \prec_\beta y) \prec_\gamma z   \big) \otimes \alpha \beta \gamma \\
&\stackrel{(\ref{ns-fam1})}{=} \big(  x \prec_{\beta \gamma} (y \prec_\gamma z + y \succ_\beta z + y \curlyvee_{\beta, \gamma} z)  \big)   \otimes \alpha \beta \gamma \\
&= (x \otimes \alpha) \prec \big( ( y \prec_\gamma z + y \succ_\beta z + y \curlyvee_{\beta, \gamma} z ) \otimes \beta \gamma    \big) \\
&= (x \otimes \alpha) \prec \big( (y \otimes \beta) \prec (z \otimes \gamma) ~+~  (y \otimes \beta) \succ (z \otimes \gamma) ~+~ (y \otimes \beta) \curlyvee (z \otimes \gamma) \big).
\end{align*}
Similarly,
\begin{align*}
&\big(  (x \otimes \alpha) \succ (y \otimes \beta)  \big) \prec (z \otimes \gamma) \\
&= \big(  (x \succ_\alpha y) \prec_\gamma z  \big) \otimes \alpha \beta \gamma \\
&\stackrel{(\ref{ns-fam2})}{=} \big( x \succ_\alpha (y \prec_\gamma z)  \big) \otimes \alpha \beta \gamma \\
&= (x \otimes \alpha) \succ \big( (y \otimes \beta) \prec (z \otimes \gamma)   \big),
\end{align*}
and
\begin{align*}
&\big(  (x \otimes \alpha) \prec (y \otimes \beta) + (x \otimes \alpha) \succ (y \otimes \beta) + (x \otimes \alpha) \curlyvee (y \otimes \beta) \big) \succ (z \otimes \gamma) \\
&= \big(  ( x \prec_\beta y + x \succ_\alpha y + x \curlyvee_{\alpha, \beta} y  ) \otimes \alpha \beta    \big) \succ (z \otimes \gamma) \\
&= \big(  ( x \prec_\beta y + x \succ_\alpha y + x \curlyvee_{\alpha, \beta} y  ) \succ_{\alpha \beta} z  \big) \otimes \alpha \beta \gamma \\
&\stackrel{(\ref{ns-fam3})}{=} \big(  x \succ_\alpha (y \succ_\beta z)  \big) \otimes \alpha \beta \gamma \\ 
&= (x \otimes \alpha) \succ \big( (y \otimes \beta) \succ (z \otimes \gamma)  \big).
\end{align*}
Finally,
\begin{align*}
&\big( (x \otimes \alpha) \prec (y \otimes \beta)  ~+~ (x \otimes \alpha) \succ (y \otimes \beta) ~+~ (x \otimes \alpha) \curlyvee (y \otimes \beta)  \big) \curlyvee (z \otimes \gamma) + \big(   (x \otimes \alpha) \curlyvee (y \otimes \beta)  \big) \prec (z \otimes \gamma) \\
&= \big(  (  x \prec_\beta y + x \succ_\alpha y + x \curlyvee_{\alpha, \beta} y) \otimes \alpha \beta  \big) \curlyvee (z \otimes \gamma) ~+~ \big(  (x \curlyvee_{\alpha, \beta} y) \otimes \alpha \beta \big) \prec (z \otimes \gamma) \\
&= \big( (  x \prec_\beta y + x \succ_\alpha y + x \curlyvee_{\alpha, \beta} y) \curlyvee_{\alpha \beta, \gamma} z  ~+~ (x \curlyvee_{\alpha, \beta} y) \prec_\gamma z \big) \otimes \alpha \beta \gamma \\
&\stackrel{(\ref{ns-fam4})}{=} \big( x \succ_\alpha (y \curlyvee_{\beta, \gamma} z) ~+~ x \curlyvee_{\alpha, \beta \gamma} (y \prec_\gamma z + y \succ_\beta z + y \curlyvee_{\beta, \gamma} z)   \big) \otimes \alpha \beta \gamma \\
&= (x \otimes \alpha) \succ \big( (y \curlyvee_{\beta, \gamma} z) \otimes \beta \gamma   \big) ~+~ (x \otimes \alpha) \curlyvee \big( (y \prec_\gamma z + y \succ_\beta z + y \curlyvee_{\beta, \gamma} z) \otimes \beta \gamma   \big) \\
&= (x \otimes \alpha) \succ \big( (y \otimes \beta) \curlyvee (z \otimes \gamma) \big) + (x \otimes \alpha) \curlyvee \big(   (y \otimes \beta) \prec (z \otimes \gamma) +   (y \otimes \beta) \succ (z \otimes \gamma)  +  (y \otimes \beta) \curlyvee (z \otimes \gamma)  \big).
\end{align*}
This proves that $(D \otimes {\bf k} \Omega, \prec, \succ, \curlyvee)$ is an NS-algebra. This completes the first part.

The second part is straightforward.
\end{proof}

\begin{exam}
Any dendriform family algebra $(D, \{ \prec_\alpha, \succ_\alpha \}_{\alpha \in \Omega})$ can be regarded as an NS-family algebra $(D, \{ \prec_\alpha, \succ_\alpha , \curlyvee_{\alpha, \beta} \}_{\alpha, \beta \in \Omega})$ with $\curlyvee_{\alpha, \beta} = 0$ for all $\alpha, \beta \in \Omega$.
\end{exam}

In \cite{zhang-free} the authors introduced tridendriform family algebras (a generalization of dendriform family algebras) and show that such algebras arise from Rota-Baxter families of arbitrary weight. In the following, we observe that a tridendriform family algebra can also be regarded as an NS-family algebra.

\begin{defn}
A {\bf tridendriform family algebra} is a vector space $D$ together with a family of bilinear operations $\{ \prec_\alpha, \succ_\alpha : D \otimes D \rightarrow D \}_{\alpha \in \Omega}$ and a bilinear map ~$ \odot : D \otimes D \rightarrow D$ satisfying
\begin{align}
( x \prec_\alpha y) \prec_\beta z =~& x \prec_{\alpha \beta} (y \prec_\beta z + y \succ_\alpha z + y \odot z),\\
(x \succ_\alpha y) \prec_\beta z =~& x \succ_\alpha (y \prec_\beta z),\\
(x \prec_\beta y + x \succ_\alpha y + x \odot y) \succ_{\alpha \beta} z =~& x \succ_\alpha (y \succ_\beta z),\\
(x \succ_\alpha y) \odot z =~& x \succ_\alpha (y \odot z), \label{tf4} \\
(x \prec_\alpha y) \odot z =~& x \odot (y \succ_\alpha z), \label{tf5} \\
(x \odot y) \prec_\alpha z =~& x \odot (y \prec_\alpha z), \label{tf6} \\
(x \odot y) \odot z =~& x \odot (y \odot z),~ \text{ for } x, y, z \in D \text{ and } \alpha, \beta \in \Omega. \label{tf7}
\end{align}
\end{defn}

Let $A$ be an associative algebra. A collection $\{ R_\alpha : A \rightarrow A \}_{\alpha \in \Omega}$ of linear maps is said to be a Rota-Baxter family of weight $\lambda$ if they satisfy
\begin{align*}
R_\alpha (a) \cdot R_\beta (b) = R_{\alpha \beta} \big( R_\alpha (a) \cdot b + a \cdot R_\beta (b) + \lambda a \cdot b ),~\text{for } a, b \in A \text{ and } \alpha, \beta \in \Omega.
\end{align*}
A Rota-Baxter family of weight $\lambda$ induces a tridendriform family algebra given by
\begin{align*}
a \prec_\alpha b = a \cdot R_\alpha (a), \quad a \succ_\alpha b = R_\alpha (a) \cdot b ~~ \text{ and } ~~ a \odot b = \lambda a \cdot b, \text{ for } a, b \in A \text{ and } \alpha \in \Omega.
\end{align*}

\begin{prop} Let $(D, \{ \prec_\alpha, \succ_\alpha \}_{\alpha \in \Omega}, \odot)$ be a tridendriform family algebra. Then it can be considered as an NS-family algebra $( D, \{ \prec_\alpha, \succ_\alpha, \curlyvee_{\alpha, \beta} \}_{\alpha, \beta \in \Omega})$, where $x \curlyvee_{\alpha, \beta} y = x \odot y$, for all $x, y \in D$ and $\alpha, \beta \in \Omega$.

Let $A$ be an associative algebra and $\{ R_\alpha : A \rightarrow A \}_{\alpha \in \Omega}$ be a Rota-Baxter family of weight $\lambda$. Then $(A, \{ \prec_\alpha, \succ_\alpha, \curlyvee_{\alpha, \beta} \}_{\alpha, \beta \in \Omega})$ is an NS-family algebra, where
\begin{align*}
a \prec_\alpha b = a \cdot R_\alpha (b), ~~~ a \succ_\alpha b = R_\alpha (a) \cdot b ~ \text{ and } ~ a \curlyvee_{\alpha, \beta} b = \lambda a \cdot b, \text{ for } a, b \in A \text{ and } \alpha, \beta \in \Omega.
\end{align*}
\end{prop}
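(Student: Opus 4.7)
The plan is to verify the four defining axioms of an NS-family algebra in each of the two assertions. For the first part, the operation $\curlyvee_{\alpha,\beta} := \odot$ is independent of the semigroup indices, so axioms (\ref{ns-fam1})--(\ref{ns-fam3}) specialize verbatim to the first three tridendriform family axioms and hold automatically.

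The substantive check is axiom (\ref{ns-fam4}). Expanding both sides with $\curlyvee_{\alpha,\beta} = \odot$, the left side becomes the four-term sum $(x \prec_\beta y) \odot z + (x \succ_\alpha y) \odot z + (x \odot y) \odot z + (x \odot y) \prec_\gamma z$, while the right side becomes $x \succ_\alpha (y \odot z) + x \odot (y \prec_\gamma z) + x \odot (y \succ_\beta z) + x \odot (y \odot z)$. The four extra tridendriform identities (\ref{tf4})--(\ref{tf7}) then pair these up bijectively: (\ref{tf4}) matches $(x \succ_\alpha y) \odot z$ with $x \succ_\alpha (y \odot z)$, (\ref{tf5}) (applied with $\alpha \mapsto \beta$) matches $(x \prec_\beta y) \odot z$ with $x \odot (y \succ_\beta z)$, (\ref{tf6}) (applied with $\alpha \mapsto \gamma$) matches $(x \odot y) \prec_\gamma z$ with $x \odot (y \prec_\gamma z)$, and (\ref{tf7}) matches the two $\odot$/$\odot$ triple products. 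Summing yields (\ref{ns-fam4}).

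For the second part, the most economical route is to combine the first part with the result of \cite{zhang-free} (recalled immediately before the proposition) that a Rota-Baxter family of weight $\lambda$ produces a tridendriform family algebra whose operations are exactly $a \prec_\alpha b = a \cdot R_\alpha(b)$, $a \succ_\alpha b = R_\alpha(a) \cdot b$ and $a \odot b = \lambda\, a \cdot b$; setting $a \curlyvee_{\alpha,\beta} b := a \odot b = \lambda\, a \cdot b$ recovers the formulas in the proposition, and the first part then converts this tridendriform family algebra into the desired NS-family algebra. Alternatively, one can verify the four NS-family axioms directly: after substituting the definitions, each reduces to a consequence of either associativity of $A$ or the weight-$\lambda$ Rota-Baxter family identity, the latter being used in (\ref{ns-fam1}) and (\ref{ns-fam3}) to collapse $R_{\alpha\beta}$ applied to a sum into the product $R_\alpha(x) \cdot R_\beta(y)$, while (\ref{ns-fam2}) is pure associativity and (\ref{ns-fam4}) expands on both sides to the same four monomials in $R_\alpha, R_\beta, R_\gamma$ and $\lambda$.

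No step is conceptually hard; the only mild obstacle is bookkeeping in axiom (\ref{ns-fam4}), in particular checking that $(\alpha\beta)\gamma = \alpha(\beta\gamma)$ in the semigroup $\Omega$ makes the subscripts on the two sides consistent, and (in the direct verification of the second part) keeping track of which occurrences of $\lambda$ come from $\curlyvee$ and which from the weight-$\lambda$ defect term in the Rota-Baxter identity.
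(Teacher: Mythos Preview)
Your proof is correct and follows essentially the same approach as the paper: the first three NS-family axioms are identified with the first three tridendriform family axioms, axiom (\ref{ns-fam4}) is verified by expanding with $\curlyvee_{\alpha,\beta}=\odot$ and matching the resulting terms via (\ref{tf4})--(\ref{tf7}), and the second part is deduced from the first together with the cited tridendriform family structure induced by a Rota-Baxter family of weight $\lambda$.
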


\begin{proof}
Note that the first three identities (\ref{ns-fam1})-(\ref{ns-fam3}) of the NS-family algebra follows from the first three identities of tridendriform family algebra. Finally, we have
\begin{align*}
&( x \prec_\beta y + x \succ_\alpha y + x \curlyvee_{\alpha, \beta} y) \curlyvee_{\alpha \beta, \gamma} z  ~+~ (x \curlyvee_{\alpha, \beta} y) \prec_\gamma z \\
&= ( x \prec_\beta y + x \succ_\alpha y + x \odot y) \odot z  ~+~ (x \odot y) \prec_\gamma z \\
&= x \odot (y \succ_\beta z) + x \succ_\alpha (y \odot z) + x \odot (y \odot z) + x \odot (y \prec_\gamma z) ~~~~ (\text{by } (\ref{tf4})-(\ref{tf7})) \\
&= x \curlyvee_{\alpha, \beta \gamma} (y \succ_\beta z) + x \succ_\alpha (y \curlyvee_{\beta, \gamma} z) + x \curlyvee_{\alpha, \beta \gamma} (y \curlyvee_{\beta, \gamma} z) + x \curlyvee_{\alpha, \beta \gamma} (y \prec_\gamma z) \\
&= x \succ_\alpha (y \curlyvee_{\beta, \gamma} z) ~+~ x \curlyvee_{\alpha, \beta \gamma} (y \prec_\gamma z + y \succ_\beta z + y \curlyvee_{\beta, \gamma} z).
\end{align*}
This proves that $(D, \{ \prec_\alpha, \succ_\alpha, \curlyvee{\alpha, \beta} \}_{\alpha, \beta \in \Omega})$ is an NS-family algebra. The second part follows from the first one.
\end{proof}

\medskip

In \cite{leroux} the author showed that a Nijenhuis operator on an associative algebra induces an NS-algebra structure. In the next result, we show that a Nijenhuis family induces an NS-family algebra.

\begin{prop}
Let $A$ be an associative algebra and $\{ N_\alpha : A \rightarrow A \}_{\alpha \in \Omega}$ be a Nijenhuis family. Then $(A, \{ \prec_\alpha, \succ_\alpha, \curlyvee_{\alpha, \beta} \}_{\alpha, \beta \in \Omega})$ is an NS-family algebra, where
\begin{align*}
a \prec_\alpha b := a \cdot N_\alpha (b), ~~~~ a \succ_\alpha b := N_\alpha (a) \cdot b ~~~~ \text{ and } ~~~~ a \curlyvee_{\alpha, \beta} b := - N_{\alpha \beta} (a \cdot b), ~ \text{ for } a, b \in A.
\end{align*}
\end{prop}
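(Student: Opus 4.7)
The plan is to verify directly the four defining identities of an NS-family algebra (Definition \ref{defn-ns-fam}) by expanding each side in terms of the associative multiplication on $A$ and the maps $N_\alpha$, and then combining associativity of $A$ with the Nijenhuis family identity
\[
N_\alpha(a) \cdot N_\beta(b) \;=\; N_{\alpha\beta}\bigl( N_\alpha(a)\cdot b + a\cdot N_\beta(b) - N_{\alpha\beta}(a\cdot b)\bigr).
\]
The translation dictionary is $a\prec_\alpha b = a\cdot N_\alpha(b)$, $a\succ_\alpha b = N_\alpha(a)\cdot b$ and $a\curlyvee_{\alpha,\beta}b = -N_{\alpha\beta}(a\cdot b)$; in particular, the \emph{sum} $x\prec_\beta y + x\succ_\alpha y + x\curlyvee_{\alpha,\beta}y$ is precisely the right-hand side of the Nijenhuis identity applied to $(x,y)$ with indices $(\alpha,\beta)$, so $N_{\alpha\beta}$ applied to this sum collapses to $N_\alpha(x)\cdot N_\beta(y)$.

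For axiom (\ref{ns-fam2}) there is nothing to do beyond associativity: both sides expand to $N_\alpha(x)\cdot y\cdot N_\beta(z)$. For axioms (\ref{ns-fam1}) and (\ref{ns-fam3}) the approach is the same on each. First I use associativity of $A$ to isolate the product $N_\alpha(\cdot)\cdot N_\beta(\cdot)$: in (\ref{ns-fam1}) this is $N_\alpha(y)\cdot N_\beta(z)$ on the left, and in (\ref{ns-fam3}) it is the hidden $N_\alpha(x)\cdot N_\beta(y)$ produced by the Nijenhuis identity applied to the sum on the left. A single application of the Nijenhuis identity then converts each side into the expansion of the other, using the dictionary above.

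The main obstacle is the fourth identity (\ref{ns-fam4}). The plan is to expand both sides fully and to display each as $-N_{\alpha\beta\gamma}$ applied to the \emph{same} explicit element of $A$. On the left, the term $(x\curlyvee_{\alpha,\beta}y)\prec_\gamma z$ produces $-N_{\alpha\beta}(x\cdot y)\cdot N_\gamma(z)$; I apply the Nijenhuis identity with indices $(\alpha\beta,\gamma)$ to this product, pull out the outer $-N_{\alpha\beta\gamma}$, and combine with $(x\prec_\beta y + x\succ_\alpha y + x\curlyvee_{\alpha,\beta}y)\curlyvee_{\alpha\beta,\gamma}z = -N_{\alpha\beta\gamma}\bigl((x\prec_\beta y + x\succ_\alpha y + x\curlyvee_{\alpha,\beta}y)\cdot z\bigr)$. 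The two $\pm N_{\alpha\beta}(x\cdot y)\cdot z$ contributions cancel and the left-hand side collapses to
\[
-N_{\alpha\beta\gamma}\Bigl( N_\alpha(x)\cdot y\cdot z + x\cdot N_\beta(y)\cdot z + x\cdot y\cdot N_\gamma(z) - N_{\alpha\beta\gamma}(x\cdot y\cdot z)\Bigr).
\]
On the right I apply the Nijenhuis identity with indices $(\alpha,\beta\gamma)$ to the product $N_\alpha(x)\cdot N_{\beta\gamma}(y\cdot z)$ hidden inside $x\succ_\alpha(y\curlyvee_{\beta,\gamma}z)$; the $\pm x\cdot N_{\beta\gamma}(y\cdot z)$ contributions then cancel against the corresponding term produced by $x\curlyvee_{\alpha,\beta\gamma}(y\prec_\gamma z + y\succ_\beta z + y\curlyvee_{\beta,\gamma}z)$, and after rearranging the right-hand side matches the displayed expression verbatim.

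I expect no hidden combinatorial subtlety in axiom (\ref{ns-fam4}); it is purely bookkeeping, and the symmetric way in which the Nijenhuis identity is invoked on each side with indices $(\alpha\beta,\gamma)$ and $(\alpha,\beta\gamma)$, together with associativity of $(x\cdot y)\cdot z = x\cdot(y\cdot z)$ and of the semigroup multiplication $(\alpha\beta)\gamma = \alpha(\beta\gamma)$, is what makes the cancellation work. The first three axioms together with this calculation yield the NS-family algebra structure on $A$.
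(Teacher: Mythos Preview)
Your proposal is correct and follows essentially the same approach as the paper: the first three axioms are handled by a single application of the Nijenhuis identity plus associativity, and for axiom (\ref{ns-fam4}) you apply the Nijenhuis identity once with indices $(\alpha\beta,\gamma)$ and once with $(\alpha,\beta\gamma)$ to produce the same cancellations the paper obtains. The only cosmetic difference is that the paper transforms the left-hand side directly into the right-hand side via an add-and-subtract of $a\cdot N_{\beta\gamma}(b\cdot c)$, whereas you reduce both sides to the common symmetric expression $-N_{\alpha\beta\gamma}\bigl(N_\alpha(x)\cdot y\cdot z + x\cdot N_\beta(y)\cdot z + x\cdot y\cdot N_\gamma(z) - N_{\alpha\beta\gamma}(x\cdot y\cdot z)\bigr)$.
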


\begin{proof}
The verifications of the identities (\ref{ns-fam1})-(\ref{ns-fam3}) are straightforward using the definition of Nijenhuis family. Here we only verify the identity (\ref{ns-fam4}). For any $a, b, c \in A$ and $\alpha, \beta, \gamma \in \Omega$,
\begin{align*}
&(a \prec_\beta b + a \succ_\alpha b + a \curlyvee_{\alpha, \beta} b) \curlyvee_{\alpha \beta, \gamma} c + (a \curlyvee_{\alpha, \beta} b) \prec_\gamma c \\
&= - N_{\alpha \beta \gamma} \big( (  N_\alpha (a) \cdot b + a \cdot N_\beta (b) - N_{\alpha \beta} (a \cdot b)) \cdot c   \big) - N_{\alpha \beta} (a \cdot b) \cdot N_\gamma (c) \\
&= - N_{\alpha \beta \gamma} \big( N_\alpha (a) \cdot b \cdot c + a \cdot N_\beta (b) \cdot c - \cancel{N_{\alpha \beta} (a \cdot b) \cdot c}   \big) - N_{\alpha \beta \gamma} \big(   \cancel{N_{\alpha \beta} (a \cdot b) \cdot c} + a \cdot b \cdot N_\gamma (c) - N_{\alpha \beta \gamma} (a \cdot b \cdot c) \big) \\
&= - N_{\alpha \beta \gamma} \big( N_\alpha (a) \cdot b \cdot c + {a \cdot N_{\beta \gamma} (b \cdot c)} - N_{\alpha \beta \gamma} (a \cdot b \cdot c)  \big) - N_{\alpha \beta \gamma} \big(  a \cdot N_\beta (b) \cdot c + a \cdot b \cdot N_\gamma (c) - {a \cdot N_{\beta \gamma} (b \cdot c)} \big) \\
&= - N_\alpha (a) \cdot N_{\beta \gamma} (b \cdot c) - N_{\alpha \beta \gamma} \big(   a \cdot (N_\beta (b) \cdot c + b \cdot N_\gamma (c) - N_{\beta \gamma} (b \cdot c)) \big) \\
&= a \succ_\alpha (b \curlyvee_{\beta, \gamma} c) + a \curlyvee_{\alpha, \beta \gamma} (b \prec_\gamma c + b \succ_\beta c + b \curlyvee_{\beta, \gamma} c).
\end{align*}
This proves that $(A, \{ \prec_\alpha, \succ_\alpha, \curlyvee_{\alpha, \beta} \}_{\alpha, \beta \in \Omega})$ is an NS-family algebra.
\end{proof}

In the following, we show that an $H$-twisted $\mathcal{O}$-operator family induces an NS-family algebra. In particular, an $\mathcal{O}$-operator family induces a dendriform family algebra (the case $H=0$). Hence we recover Proposition \ref{prop-o-dend}.

\begin{prop}\label{twisted-rota-ns}
Let $A$ be an associative algebra, $M$ be an $A$-bimodule and $H$ be a Hochschild $2$-cocycle. Let $\{ T_\alpha : M \rightarrow A \}_{\alpha \in \Omega}$ be an $H$-twisted $\mathcal{O}$-operator family. Then $(M, \{ \prec_\alpha, \succ_\alpha, \curlyvee_{\alpha, \beta} \}_{\alpha, \beta \in \Omega})$ is an NS-family algebra, where
\begin{align*}
u \prec_\alpha v := u \cdot T_\alpha (v), ~~~ u \succ_\alpha v := T_\alpha (u) \cdot v ~~~ \text{ and } ~~~ u \curlyvee_{\alpha, \beta} v := H (T_\alpha (u), T_\beta (v)), ~\text{ for } u, v \in M.
\end{align*}
Further, if $\{ T_\alpha : M \rightarrow A \}_{\alpha \in \Omega}$ and $\{ T'_\alpha : M' \rightarrow A' \}_{\alpha \in \Omega}$ are two twisted $\mathcal{O}$-operator families and $(\phi, \psi)$ is a morphism between them (cf. Definition \ref{tw-o-map}), then the map $\psi$ is a {morphism} between the induced NS-family algebras. Therefore, there is a functor $\mathcal{P} : {\bf TwOoperf} \rightarrow {\bf NSf}$.
\end{prop}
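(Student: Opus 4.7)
The plan is to verify the four defining identities of an NS-family algebra by direct substitution. Three of them will reduce to associativity of $A$ together with the bimodule axioms for $M$, while the fourth will reduce to the Hochschild $2$-cocycle condition (\ref{2-co-id}) on $H$. The key observation is that wherever the combination $u \cdot T_\beta(v) + T_\alpha(u) \cdot v + H(T_\alpha(u), T_\beta(v))$ appears inside $T_{\alpha\beta}$, the twisted family identity (\ref{twisted-o-fam}) collapses it to $T_\alpha(u) \cdot T_\beta(v)$.

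For identity (\ref{ns-fam1}), the left-hand side expands to $(u \cdot T_\alpha(v)) \cdot T_\beta(w)$, while the right-hand side simplifies via (\ref{twisted-o-fam}) to $u \cdot (T_\alpha(v) \cdot T_\beta(w))$; associativity together with the bimodule axiom closes the equality. Identity (\ref{ns-fam2}) is nothing but the bimodule axiom $(T_\alpha(u) \cdot v) \cdot T_\beta(w) = T_\alpha(u) \cdot (v \cdot T_\beta(w))$, with no appeal to (\ref{twisted-o-fam}) required. Identity (\ref{ns-fam3}) uses (\ref{twisted-o-fam}) on the outer $T_{\alpha\beta}$ and then reduces to the associative equality $(T_\alpha(u) \cdot T_\beta(v)) \cdot w = T_\alpha(u) \cdot (T_\beta(v) \cdot w)$.

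The fourth identity (\ref{ns-fam4}) is the main obstacle. Substituting the definitions and applying (\ref{twisted-o-fam}) to the inner arguments of $T_{\alpha\beta}$ on the left and $T_{\beta\gamma}$ on the right, both sides reduce to expressions involving only $H$, $T_\alpha(u)$, $T_\beta(v)$, and $T_\gamma(w)$. After the dust settles, the identity becomes
\[
H(T_\alpha(u) \cdot T_\beta(v), T_\gamma(w)) + H(T_\alpha(u), T_\beta(v)) \cdot T_\gamma(w) = T_\alpha(u) \cdot H(T_\beta(v), T_\gamma(w)) + H(T_\alpha(u), T_\beta(v) \cdot T_\gamma(w)),
\]
which is exactly the $2$-cocycle condition (\ref{2-co-id}) evaluated at $a = T_\alpha(u)$, $b = T_\beta(v)$, $c = T_\gamma(w)$. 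This is the step in which the cocycle hypothesis on $H$ is essentially used.

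For the morphism part, I would verify directly from the compatibility conditions in Definition \ref{tw-o-map} that $\psi$ transports each of the three NS-family operations. For instance, $\psi(u \prec_\alpha v) = \psi(u \cdot T_\alpha(v)) = \psi(u) \cdot' \phi(T_\alpha(v)) = \psi(u) \cdot' T'_\alpha(\psi(v)) = \psi(u) \prec'_\alpha \psi(v)$, and the analogous computation works for $\succ_\alpha$, while preservation of $\curlyvee_{\alpha,\beta}$ uses the additional condition $\psi \circ H = H' \circ (\phi \otimes \phi)$. Functoriality of $\mathcal{P}$ is then immediate from the fact that it acts as the identity on underlying linear maps, so it automatically preserves identities and composition.
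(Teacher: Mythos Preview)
Your proposal is correct and follows essentially the same route as the paper: identities (\ref{ns-fam1})--(\ref{ns-fam3}) are reduced via (\ref{twisted-o-fam}) and the bimodule axioms, while (\ref{ns-fam4}) is collapsed (again via (\ref{twisted-o-fam}) on the inner $T_{\alpha\beta}$ and $T_{\beta\gamma}$) to the $2$-cocycle condition (\ref{2-co-id}) at $(T_\alpha(u), T_\beta(v), T_\gamma(w))$. The morphism verification and functoriality argument likewise match the paper's direct check.
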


\begin{proof}
For any $u, v, w \in M$, we have
\begin{align*}
(u \prec_\alpha v) \prec_\beta w =~& u \cdot T_\alpha (v) \cdot T_\beta (w) \\
\stackrel{(\ref{twisted-o-fam})}{=}~& u \cdot T_{\alpha \beta} \big(   T_\alpha (v) \cdot w + v \cdot T_\beta (w) + H (T_\alpha (v), T_\beta (w)) \big) \\
=~& u \prec_{\alpha \beta} (v \prec_\beta w + v \succ_\alpha w + v \curlyvee_{\alpha, \beta} w).
\end{align*}
Similarly, we have
\begin{align*}
(u \succ_\alpha v) \prec_\beta w = (T_\alpha (u) \cdot v) \cdot T_\beta (w) = T_\alpha (u) \cdot (v \cdot T_\beta (w)) = u \succ_\alpha (v \prec_\beta w),
\end{align*}
\begin{align*}
(u \prec_\beta v + u \succ_\alpha v + u \curlyvee_{\alpha, \beta} v) \succ_{\alpha \beta} w =~& T_{\alpha \beta} \big( u \cdot T_\beta (v) + T_\alpha (u) \cdot v + H (T_\alpha (u), T_\beta (v) ) \big) \cdot w \\
\stackrel{(\ref{twisted-o-fam})}{=}& T_\alpha (u) \cdot T_\beta (v) \cdot w \\
=~& u \succ_\alpha (v \succ_\beta w).
\end{align*}
Thus, we verify the three conditions (\ref{ns-fam1})-(\ref{ns-fam3}). To prove the condition (\ref{ns-fam4}), we observe that
\begin{align}\label{h-abg}
T_\alpha (u) \cdot H (T_\beta (v), T_\gamma (w)) &- H (T_\alpha (u) \cdot T_\beta (v), T_\gamma (w)) \\&+ H (T_\alpha (u) , T_\beta (v) \cdot T_\gamma (w)) - H (T_\alpha (u), T_\beta (v)) \cdot T_\gamma (w) = 0 \nonumber
\end{align}
as $H$ is a $2$-cocycle. Note that (\ref{h-abg}) is equivalent to
\begin{align*}
u \succ_\alpha (v \curlyvee_{\beta, \gamma} w) -~& (u \prec_\beta v + u \succ_\alpha v + u \curlyvee_{\alpha, \beta} v) \curlyvee_{\alpha \beta, \gamma} w \\
+~& u \curlyvee_{\alpha, \beta \gamma} (v \prec_\gamma w + v \succ_\beta w + v \curlyvee_{\beta, \gamma} w) - (u \curlyvee_{\alpha, \beta} v) \prec_\gamma w = 0.
\end{align*}
This shows that the condition (\ref{ns-fam4}) also holds. Hence $(M, \{ \prec_\alpha, \succ_\alpha, \curlyvee_{\alpha, \beta} \}_{\alpha, \beta \in \Omega})$ is an NS-family algebra.

For the second part, we see
\begin{align*}
&\psi (u \prec_\alpha v) = \psi (u \cdot T_\alpha (u)) = \psi (u) \cdot \phi T_\alpha (v) = \psi (u) \cdot T'_\alpha (\psi (v)) = \psi (u) \prec'_\alpha \psi (v), \\
&\psi (u \succ_\alpha v) = \psi (T_\alpha (u) \cdot v) = \phi T_\alpha (u) \cdot \psi (v) = T'_\alpha (\psi (u)) \cdot \psi (v) = \psi (u) \succ'_\alpha \psi (v), \\
&\psi (u \curlyvee_{\alpha, \beta} v) = \psi \big(  H (T_\alpha (u), T_\beta (v) )  \big) = H' (\phi T_\alpha (u), \phi T_\beta (v) ) = H' \big( T'_\alpha (\psi (u)),  T'_\beta (\psi (v)) \big) = \psi (u ) \curlyvee'_{\alpha, \beta} \psi (v). 
\end{align*}
This proves that $\psi$ is a morphism between induced NS-family algebras.
\end{proof}


Let $(D, \{ \prec_\alpha, \succ_\alpha, \curlyvee_{\alpha, \beta} \}_{\alpha, \beta \in \Omega} )$ be an NS-family algebra. Then by Proposition \ref{ns-fam-ns}, $(D \otimes {\bf k} \Omega, \prec, \succ, \curlyvee)$ is an ordinary NS-algebra. Hence $(D \otimes {\bf k} \Omega, \ast)$ is an associative algebra, where
\begin{align*}
(a \otimes \alpha) \ast (b \otimes \beta) := (a \succ_\alpha b + a \prec_\beta b + a \curlyvee_{\alpha, \beta} b) \otimes \alpha \beta.
\end{align*}
We denote this algebra by $(D \otimes {\bf k} \Omega)_\mathrm{Tot}$. Moreover, there is a $(D \otimes {\bf k} \Omega)_\mathrm{Tot}$-bimodule structure on $D$ with the left and right actions
\begin{align*}
(a \otimes \alpha) \cdot b = a \succ_\alpha b ~~~ \text{ and } ~~~ b \cdot (a \otimes \alpha) = b \prec_\alpha a,
\end{align*}
for $a \otimes \alpha \in D \otimes {\bf k} \Omega$ and $b \in D.$ We also consider a map $H: (D \otimes {\bf k} \Omega)_\mathrm{Tot} \otimes (D \otimes {\bf k} \Omega)_\mathrm{Tot} \rightarrow D$ given by $H (a \otimes \alpha, b \otimes \beta) = a \curlyvee_{\alpha, \beta} b$. Then the condition (\ref{ns-fam4}) is equivalent to the fact that the map $H$ is a Hochschild $2$-cocycle. With all these notations, the collection $\{ \mathrm{Id}_\alpha : D \rightarrow (D \otimes {\bf k} \Omega)_\mathrm{Tot} \}_{\alpha \in \Omega}$, where $\mathrm{Id}_\alpha (x) = x \otimes \alpha$, is an $H$-twisted $\mathcal{O}$-operator family. Moreover, the induced NS-family algebra structure on $D$ coincides with the given one. The above correspondence is functorial. Hence, we get a functor $\mathcal{Q} : {\bf NSf} \rightarrow {\bf TwOoperf}.$

\medskip

The proof of the following result is similar to Proposition \ref{prop-functor}. Hence we will not repeat it here.

\begin{prop}
The functor $\mathcal{Q} : {\bf NSf} \rightarrow {\bf TwOoperf}$ constructed above is left adjoint to the functor $\mathcal{P} : {\bf TwOoperf} \rightarrow {\bf NSf}$.
\end{prop}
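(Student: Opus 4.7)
The plan is to mimic the argument given for Proposition \ref{prop-functor}, now taking the $\curlyvee$-component into account so that the twisting $2$-cocycle on the source side (namely $H(a\otimes\alpha, b\otimes\beta)=a\curlyvee_{\alpha,\beta}b$) matches the given $2$-cocycle on the target side under the morphism.

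Starting from $f\in\mathrm{Hom}_{\bf NSf}(D, M_{\{T_\alpha\}})$, where $M_{\{T_\alpha\}}$ denotes the NS-family algebra on $M$ obtained from $\{T_\alpha\}$ via Proposition \ref{twisted-rota-ns}, I define
\[
T^f:(D\otimes{\bf k}\Omega)_{\mathrm{Tot}}\longrightarrow A,\qquad T^f(x\otimes\alpha):=T_\alpha(f(x)).
\]
First I would check that $T^f$ is an algebra homomorphism: expanding $T^f\bigl((x\otimes\alpha)\ast(y\otimes\beta)\bigr)=T_{\alpha\beta}\bigl(f(x\succ_\alpha y+x\prec_\beta y+x\curlyvee_{\alpha,\beta}y)\bigr)$, I use that $f$ preserves $\prec,\succ,\curlyvee$, unfold the NS-family operations on $M$ via the definitions $u\prec_\alpha v=u\cdot T_\alpha(v)$, $u\succ_\alpha v=T_\alpha(u)\cdot v$, $u\curlyvee_{\alpha,\beta}v=H(T_\alpha(u),T_\beta(v))$, and then apply the defining relation (\ref{twisted-o-fam}) of the $H$-twisted $\mathcal{O}$-operator family to collapse the right-hand side to $T_\alpha(f(x))\cdot T_\beta(f(y))=T^f(x\otimes\alpha)\cdot T^f(y\otimes\beta)$. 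Next, the bimodule compatibility $f((x\otimes\alpha)\cdot y)=T^f(x\otimes\alpha)\cdot f(y)$ and the symmetric right-action identity are immediate from the definitions $(x\otimes\alpha)\cdot y=x\succ_\alpha y$ and the fact that $f$ preserves $\succ_\alpha$. The relation $T^f\circ\mathrm{Id}_\alpha=T_\alpha\circ f$ is built into the definition of $T^f$. Finally, for the $2$-cocycle condition required of a morphism in ${\bf TwOoperf}$, I compute $f(H(x\otimes\alpha,y\otimes\beta))=f(x\curlyvee_{\alpha,\beta}y)=f(x)\curlyvee_{\alpha,\beta}f(y)=H(T_\alpha(f(x)),T_\beta(f(y)))=H(T^f(x\otimes\alpha),T^f(y\otimes\beta))$, where the second equality uses that $f$ is an NS-family morphism and the third uses the definition of $\curlyvee$ on $M_{\{T_\alpha\}}$. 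This shows $(T^f,f)\in\mathrm{Hom}_{\bf TwOoperf}\bigl(\mathcal{Q}(D),\{T_\alpha\}\bigr)$.

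Conversely, given a morphism $(\phi,\psi)\in\mathrm{Hom}_{\bf TwOoperf}\bigl(\mathcal{Q}(D),\{T_\alpha\}\bigr)$, I would use the functoriality of $\mathcal{P}$ (established in Proposition \ref{twisted-rota-ns}) to conclude that $\psi:D\to M_{\{T_\alpha\}}$ is automatically a morphism of NS-family algebras, since $\mathcal{P}(\mathcal{Q}(D))=D$ as an NS-family algebra. The remaining verification is that the two assignments $f\mapsto(T^f,f)$ and $(\phi,\psi)\mapsto\psi$ are mutually inverse: one direction is tautological, and the other requires checking $\phi=T^\psi$, which follows from the equation $\phi\circ\mathrm{Id}_\alpha=T_\alpha\circ\psi$ evaluated on an arbitrary element $x\otimes\alpha$, combined with the fact that $(D\otimes{\bf k}\Omega)_{\mathrm{Tot}}$ is spanned by elements of the form $x\otimes\alpha$. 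Naturality in both arguments is routine. The only mild subtlety, and the step I would be most careful with, is the bookkeeping in the algebra-homomorphism verification, since the three operations $\prec_\alpha,\succ_\alpha,\curlyvee_{\alpha,\beta}$ together reassemble exactly into the twisted $\mathcal{O}$-operator family identity; everything else is a direct transcription of the proof of Proposition \ref{prop-functor}, which is why the author only states the result without repeating the argument.
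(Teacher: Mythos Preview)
Your proposal is correct and follows exactly the approach the paper intends: the paper does not give an independent proof but simply refers back to the argument of Proposition \ref{prop-functor}, and you have carried out precisely that argument with the necessary additional verification of the $2$-cocycle compatibility $\psi\circ H = H'\circ(\phi\otimes\phi)$ coming from the $\curlyvee$-component. All the steps you outline (algebra homomorphism via the twisted identity (\ref{twisted-o-fam}), bimodule compatibility, the cocycle check, and the mutual inverse using $\phi\circ\mathrm{Id}_\alpha = T_\alpha\circ\psi$) are correct.
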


\medskip

Let $\{ T_\alpha : M \rightarrow A \}_{\alpha \in \Omega}$ be an $H$-twisted $\mathcal{O}$-operator family. Then by Proposition \ref{twisted-rota-ns}, we get that $(M, \{ \prec_\alpha, \succ_\alpha, \curlyvee_{\alpha, \beta} \}_{\alpha, \beta \in \Omega})$ is an NS-family algebra. Hence, by Theorem \ref{ns-fam-ns}, $(M \otimes {\bf k} \Omega, \prec, \succ, \curlyvee)$ is an NS-algebra, where
\begin{align}\label{diag-ns}
\begin{cases}
(u \otimes \alpha) \prec (u \otimes \beta) = (u \prec_\beta v) \otimes \alpha \beta = (u \cdot T_\beta (v)) \otimes \alpha \beta,\\
(u \otimes \alpha) \succ (u \otimes \beta) = (u \succ_\alpha v) \otimes \alpha \beta = (T_\alpha (u) \cdot v) \otimes \alpha \beta,\\
(u \otimes \alpha) \curlyvee (u \otimes \beta) = (u \curlyvee_{\alpha, \beta} v) \otimes \alpha \beta = H (T_\alpha (u), T_\beta (v)) \otimes \alpha \beta.
\end{cases}
\end{align}
On the other hand, the $H$-twisted $\mathcal{O}$-operator family $\{ T_\alpha : M \rightarrow A \}_{\alpha \in \Omega}$ induces the $\widehat{H}$-twisted $\mathcal{O}$-operator $T : M \otimes {\bf k} \Omega \rightarrow A \otimes {\bf k} \Omega$, $T (u \otimes \alpha) = T_\alpha (u) \otimes \alpha$, for $u \otimes \alpha \in M \otimes {\bf k} \Omega$ (see Proposition \ref{of-o}). Hence, we get an NS-algebra structure on $M \otimes {\bf k} \Omega$, which coincides with the one given by (\ref{diag-ns}). As a summary, we obtain the commutative diagram (\ref{comm-diag}) given in Section \ref{subsec-o}.

\section{Cohomology and deformations of family structures}\label{sec-5}
In this section, we introduce the cohomology of twisted $\mathcal{O}$-operator families and NS-family algebras. We observe that these cohomologies govern the corresponding formal deformations.

\subsection{Associative algebras relative to a semigroup}
In this subsection, we first consider the notion of associative algebra relative to a semigroup $\Omega$ (which we call an $\Omega$-associative algebra) introduced by Aguiar \cite{agu}. Such algebras are closely related with (twisted) $\mathcal{O}$-operator family and NS-family algebras. We define bimodules over such algebras and introduce cohomology theory. We will use this to define the cohomology of twisted $\mathcal{O}$-operator families in the next subsection.

\begin{defn} \cite{agu}
An associative algebra relative to a semigroup $\Omega$ ({\bf $\Omega$-associative algebra}) is a vector space $A$ together with a collection of bilinear operations $\{  ~ \cdot_{\alpha, \beta} : A \otimes  A \rightarrow  A \}_{\alpha, \beta \in \Omega}$ satisfying
\begin{align*}
(a \cdot_{\alpha , \beta} b) \cdot_{\alpha \beta, \gamma} c = a \cdot_{\alpha, \beta \gamma} (b \cdot_{\beta, \gamma} c), ~ \text{ for } a, b, c \in A \text{ and } \alpha, \beta, \gamma \in \Omega.
\end{align*}
\end{defn}

Aguiar showed that a dendriform family algebra induces an $\Omega$-associative algebra. We show that an NS-family algebra (cf. Definition \ref{defn-ns-fam}) induces an $\Omega$-associative algebra which generalizes the result of Aguiar.

\begin{prop}\label{ns-fam-omega}
Let $(D, \{ \prec_\alpha, \succ_\alpha, \curlyvee_{\alpha, \beta} \}_{\alpha, \beta \in \Omega})$ be an NS-family algebra. Then $(D, \{ ~\ast_{\alpha, \beta} \}_{\alpha, \beta \in \Omega})$ is an $\Omega$-associative algebra, where
\begin{align*}
x \ast_{\alpha, \beta} y = x \prec_\beta y + x \succ_\alpha y + x \curlyvee_{\alpha, \beta} y,~ \text{ for } x, y \in D \text{ and } \alpha, \beta \in \Omega.
\end{align*}
\end{prop}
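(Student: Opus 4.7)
The plan is to verify the $\Omega$-associativity identity $(x \ast_{\alpha, \beta} y) \ast_{\alpha\beta, \gamma} z = x \ast_{\alpha, \beta\gamma}(y \ast_{\beta, \gamma} z)$ directly, by expanding both sides according to the definition of $\ast$ and then matching terms via the four defining axioms of an NS-family algebra.

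First, I would expand the left-hand side. Substituting $x \ast_{\alpha, \beta} y = x \prec_\beta y + x \succ_\alpha y + x \curlyvee_{\alpha, \beta} y$ into the outer $\ast_{\alpha\beta, \gamma}$, the LHS breaks into five natural blocks: the three $\prec_\gamma$-terms $(x \prec_\beta y) \prec_\gamma z$, $(x \succ_\alpha y) \prec_\gamma z$, $(x \curlyvee_{\alpha, \beta} y) \prec_\gamma z$, together with the single $\succ_{\alpha\beta}$-term $(x \prec_\beta y + x \succ_\alpha y + x \curlyvee_{\alpha, \beta} y) \succ_{\alpha\beta} z$ and the single $\curlyvee_{\alpha\beta, \gamma}$-term $(x \prec_\beta y + x \succ_\alpha y + x \curlyvee_{\alpha, \beta} y) \curlyvee_{\alpha\beta, \gamma} z$. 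Similarly, expanding the right-hand side produces nine summands of the form $x \diamond (y \diamond' z)$ for $\diamond, \diamond' \in \{\prec, \succ, \curlyvee\}$ with appropriate semigroup indices.

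The second step is the matching. Axiom (\ref{ns-fam1}) rewrites $(x \prec_\beta y) \prec_\gamma z$ as $x \prec_{\beta\gamma}(y \prec_\gamma z + y \succ_\beta z + y \curlyvee_{\beta, \gamma} z)$, accounting for all three $x \prec_{\beta\gamma}(\cdots)$ summands on the RHS. Axiom (\ref{ns-fam2}) handles $(x \succ_\alpha y) \prec_\gamma z = x \succ_\alpha (y \prec_\gamma z)$, and (\ref{ns-fam3}) converts the $\succ_{\alpha\beta}$-block into $x \succ_\alpha (y \succ_\beta z)$. The two remaining LHS summands, namely $(x \curlyvee_{\alpha, \beta} y) \prec_\gamma z$ together with the $\curlyvee_{\alpha\beta, \gamma}$-block, form exactly the left-hand side of axiom (\ref{ns-fam4}), which rewrites them as $x \succ_\alpha (y \curlyvee_{\beta, \gamma} z) + x \curlyvee_{\alpha, \beta\gamma}(y \prec_\gamma z + y \succ_\beta z + y \curlyvee_{\beta, \gamma} z)$. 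Collecting the nine rewritten pieces reassembles precisely $x \ast_{\alpha, \beta\gamma}(y \ast_{\beta, \gamma} z)$.

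No genuine obstacle is anticipated: the four NS-family axioms are in bijection with the four bookkeeping blocks into which the $\Omega$-associator naturally partitions, and the semigroup indices $\alpha\beta$, $\alpha\beta\gamma$, $\beta\gamma$ line up on both sides by construction. The only subtlety to respect is that axiom (\ref{ns-fam4}) must be used in its original combined form, with both $(x \curlyvee_{\alpha, \beta} y) \prec_\gamma z$ and the $\curlyvee_{\alpha\beta, \gamma}$-term on the left, rather than split into separate pieces; this is what forces the $\curlyvee$ operation to appear symmetrically and explains why NS-family algebras are the precise generalization of dendriform family algebras needed here to recover Aguiar's passage to $\Omega$-associative algebras.
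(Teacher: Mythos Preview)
Your proposal is correct and follows essentially the same approach as the paper's own proof: expand the $\Omega$-associator, then match its four natural blocks against the four NS-family axioms (with the obvious reindexing $\alpha\to\beta$, $\beta\to\gamma$ in (\ref{ns-fam1}) and $\beta\to\gamma$ in (\ref{ns-fam2}), which the paper states explicitly and you apply implicitly). There is nothing to correct.
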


\begin{proof}
In the identity (\ref{ns-fam1}), replacing $\alpha$ by $\beta$ and $\beta$ by $\gamma$, we get
\begin{align}\label{ns-fam-1n}
(x \prec_\beta y) \prec_\gamma z = x \prec_{\beta \gamma} (y \prec_\gamma z + y \succ_\beta + y \curlyvee_{\beta, \gamma} z).
\end{align}
Similarly, in the identity (\ref{ns-fam2}), replacing $\beta$ by $\gamma$, we get
\begin{align}\label{ns-fam-2n}
(x \succ_\alpha y) \prec_\gamma z = x \succ_\alpha (y \prec_\gamma z).
\end{align}
Finally, if we add the left hand sides of the identities (\ref{ns-fam-1n}), (\ref{ns-fam-2n}), (\ref{ns-fam3}) and (\ref{ns-fam4}), we get $(x \ast_{\alpha, \beta} y)\ast_{\alpha \beta, \gamma} z$. On the other hand, if we add the right hand sides of the above mentioned identities, we get $x \ast_{\alpha, \beta \gamma} (y \ast_{\beta, \gamma} z)$. This shows that $(D, \{ ~\ast_{\alpha, \beta} \}_{\alpha, \beta \in \Omega})$ is an $\Omega$-associative algebra.
\end{proof}

Combining Proposition \ref{twisted-rota-ns} and Proposition \ref{ns-fam-omega}, we get the following.

\begin{prop}\label{omega-tw}
Let $\{ T_\alpha : M \rightarrow A \}_{\alpha \in \Omega}$ be an $H$-twisted $\mathcal{O}$-operator family. Then $(M , \{~ \ast_{\alpha, \beta} \}_{\alpha, \beta \in \Omega})$ is an $\Omega$-associative algebra, where
\begin{align}\label{omega-tw-for}
u \ast_{\alpha, \beta} v = T_\alpha (u) \cdot v + u \cdot T_\beta (v) + H (T_\alpha (u), T_\beta (v)), ~ \text{ for } u, v \in M \text{ and } \alpha, \beta \in \Omega.
\end{align}
\end{prop}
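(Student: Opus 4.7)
The plan is to obtain the conclusion by a direct composition of the two preceding propositions, without reverifying the $\Omega$-associativity from scratch. First I would invoke Proposition \ref{twisted-rota-ns}: since $\{T_\alpha : M \to A\}_{\alpha \in \Omega}$ is an $H$-twisted $\mathcal{O}$-operator family, the triple of families of operations on $M$ defined by
\[
u \prec_\alpha v = u \cdot T_\alpha(v), \qquad u \succ_\alpha v = T_\alpha(u) \cdot v, \qquad u \curlyvee_{\alpha,\beta} v = H(T_\alpha(u), T_\beta(v))
\]
endows $M$ with the structure of an NS-family algebra.

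Next I would apply Proposition \ref{ns-fam-omega} to this NS-family algebra. That proposition produces from any NS-family algebra an $\Omega$-associative algebra whose operations are the totalizations
\[
u \ast_{\alpha,\beta} v = u \prec_\beta v + u \succ_\alpha v + u \curlyvee_{\alpha,\beta} v.
\]
Substituting the explicit formulas for $\prec_\alpha$, $\succ_\alpha$, and $\curlyvee_{\alpha,\beta}$ coming from Proposition \ref{twisted-rota-ns} gives exactly
\[
u \ast_{\alpha,\beta} v = T_\alpha(u) \cdot v + u \cdot T_\beta(v) + H(T_\alpha(u), T_\beta(v)),
\]
which is the formula (\ref{omega-tw-for}) stated in the proposition.

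Since both intermediate propositions are already available, no further computation is required; the $\Omega$-associativity is inherited entirely from the NS-family axioms verified in Proposition \ref{ns-fam-omega}. There is essentially no obstacle here, the proof is a one-step functorial composition. The only minor care needed is to match indices correctly, specifically, to note that in the totalization formula the $\prec$ operation carries the second index $\beta$ while $\succ$ carries the first index $\alpha$, which is exactly what produces the asymmetric appearance of $T_\alpha(u) \cdot v$ and $u \cdot T_\beta(v)$ in the final formula.
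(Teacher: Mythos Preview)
Your proof is correct and is exactly the argument the paper gives: the proposition is obtained by combining Proposition~\ref{twisted-rota-ns} and Proposition~\ref{ns-fam-omega}, with the formula \eqref{omega-tw-for} arising by substituting the explicit $\prec_\alpha$, $\succ_\alpha$, $\curlyvee_{\alpha,\beta}$ into the totalization $u \ast_{\alpha,\beta} v = u \prec_\beta v + u \succ_\alpha v + u \curlyvee_{\alpha,\beta} v$.
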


Let $(A , \{ ~ \cdot_{\alpha, \beta} \}_{\alpha, \beta \in \Omega})$ be an $\Omega$-associative algebra. A {\bf bimodule} over it consists of a vector space $M$ together with a collection 

\[ \begin{Bmatrix} l_{\alpha, \beta} : A \otimes M \rightarrow M ,~ (a, u) \mapsto a \cdot_{\alpha, \beta } u \\
r_{\alpha, \beta} : M \otimes A \rightarrow M, ~ (u, a) \mapsto u \cdot_{\alpha, \beta} a
\end{Bmatrix}_{\alpha, \beta \in \Omega}\]
of bilinear maps satisfying for $a, b \in A, u \in M \text{ and } \alpha, \beta, \gamma \in \Omega$,
\begin{align*}
(a \cdot_{\alpha , \beta} b) \cdot_{\alpha \beta, \gamma} u = a \cdot_{\alpha, \beta \gamma} (b \cdot_{\beta, \gamma} u), ~~
(a \cdot_{\alpha , \beta} u) \cdot_{\alpha \beta, \gamma} b = a \cdot_{\alpha, \beta \gamma} (u \cdot_{\beta, \gamma} b), ~~ (u \cdot_{\alpha , \beta} a) \cdot_{\alpha \beta, \gamma} b = u \cdot_{\alpha, \beta \gamma} (a \cdot_{\beta, \gamma} b).
\end{align*}

\noindent \underline{{\bf Assumpsion}}. For the rest of the present subsection and subsection \ref{subsec-tw-co}, we assume that $\Omega$ is a semigroup with unit $1 \in \Omega$. The unital condition of $\Omega$ is only useful in the coboundary operator of the cohomology at the degree $0$ level.

Let $(A , \{ ~ \cdot_{\alpha, \beta} \}_{\alpha, \beta \in \Omega})$ be an $\Omega$-associative algebra and $M$ be a bimodule over it. For each $n \geq 0$, we define an abelian group $C^n_{\Omega\mathrm{-Hoch}} (A,M)$ by
\begin{align*}
&C^0_{\Omega\mathrm{-Hoch}} (A,M) = M,\\
&C^{n \geq 1}_{\Omega\mathrm{-Hoch}} (A,M) = \big\{  f = \{ f_{\alpha_1, \ldots, \alpha_n} \}_{\alpha_1, \ldots, \alpha_n \in \Omega} ~|~ f_{\alpha_1, \ldots, \alpha_n} : A^{\otimes n} \rightarrow M \text{ is multilinear}  \big\}.
\end{align*}
There is a  map $\delta_{\Omega\mathrm{-Hoch}} : C^n_{\Omega\mathrm{-Hoch}} (A,M) \rightarrow C^{n+1}_{\Omega\mathrm{-Hoch}} (A,M)$ given by
\begin{align}
\big(   \delta_{\Omega\mathrm{-Hoch}} (u) \big)_\alpha (a) =~& a \cdot_{\alpha, 1} u ~-~ u \cdot_{1, \alpha} a, \\
\big(   \delta_{\Omega\mathrm{-Hoch}} (f) \big)_{\alpha, \ldots, \alpha_{n+1}} (a_1, \ldots, a_{n+1} ) =~& a_1 \cdot_{\alpha_1 , \alpha_2 \cdots \alpha_{n+1}} f_{\alpha_2, \ldots, \alpha_{n+1}} (a_2, \ldots, a_{n+1})  \\
+ \sum_{i=1}^n (-1)^i ~ f_{\alpha_1, \ldots, \alpha_i \alpha_{i+1}, \ldots, \alpha_{n+1}} & \big(  a_1, \ldots, a_{i-1}, a_i \cdot_{\alpha_i \alpha_{i+1}} a_{i+1}, a_{i+2}, \ldots, a_{n+1}  \big) \nonumber \\
+ (-1)^{n+1} ~ f_{\alpha_1, \ldots, \alpha_n} &(a_1, \ldots, a_n) ~\cdot_{\alpha_1 \cdots a_n, \alpha_{n+1}} a_{n+1}, \nonumber
\end{align}
for $m \in M = C^0_{\Omega\mathrm{-Hoch}} (A,M)$ and $f \in C^{n \geq 1}_{\Omega\mathrm{-Hoch}} (A,M)$. Then similar to the standard Hochschild coboundary operator, one can show that $( \delta_{\Omega\mathrm{-Hoch}} )^2 = 0$. In other words, $ \{  C^{n \geq 0}_{\Omega\mathrm{-Hoch}} (A,M) , \delta_{\Omega\mathrm{-Hoch}} \}$ is a cochain complex. The corresponding cohomology groups are called the cohomology of the $\Omega$-associative algebra $A$ with coefficients in the bimodule $M$.

\subsection{Cohomology and deformations of twisted $\mathcal{O}$-operator family}\label{subsec-tw-co} Let $A$ be an associative algebra, $M$ be an $A$-bimodule and $H$ be a $2$-cocycle. Let $\{ T_\alpha : M \rightarrow A \}_{\alpha \in \Omega}$ be an $H$-twisted $\mathcal{O}$-operator family. Then we have seen in Proposition \ref{omega-tw} that $(M, \{ ~ \ast_{\alpha, \beta \}_{\alpha, \beta \in \Omega}})$ is an $\Omega$-associative algebra, where the structure maps $\{ ~ \ast_{\alpha, \beta} \}_{\alpha, \beta \in \Omega}$ are given by (\ref{omega-tw-for}). Note that these structure maps satisfy $T_{\alpha \beta} (u \ast_{\alpha, \beta} v) = T_\alpha (u) \cdot T_\beta (v)$, for $u, v \in M$ and $\alpha, \beta \in \Omega$. We define a collection of bilinear maps as follows

\[ \begin{Bmatrix} \triangleright_{\alpha, \beta} : M \otimes A \rightarrow A ,~ u \triangleright_{\alpha, \beta} a := T_\alpha (u) \cdot a - T_{\alpha \beta} (u \cdot a) - T_{\alpha \beta} \big( H (T_\alpha (u), a) \big) \\
\triangleleft_{\alpha, \beta} : A \otimes M \rightarrow A, ~ a \triangleleft_{\alpha, \beta} u := a \cdot T_\beta (u) - T_{\alpha \beta} (a \cdot u) - T_{\alpha \beta} \big(  H (a, T_\beta (u)) \big)
\end{Bmatrix}_{\alpha, \beta \in \Omega}.\]

\begin{thm}\label{bim-thm}
With the above notations, $(A, \{ \triangleright_{\alpha, \beta}, \triangleleft_{\alpha, \beta} \}_{\alpha, \beta \in \Omega})$ is a bimodule over the $\Omega$-associative algebra $(M, \{~ \cdot_{\alpha, \beta} \}_{\alpha, \beta \in \Omega})$.
\end{thm}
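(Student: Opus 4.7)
The plan is to verify the three defining axioms of a bimodule over an $\Omega$-associative algebra directly. Unpacking the definitions in our setting, what must be shown is that for every $u, v \in M$, $a \in A$, and $\alpha, \beta, \gamma \in \Omega$,
\begin{align*}
(u \ast_{\alpha, \beta} v) \triangleright_{\alpha \beta, \gamma} a &= u \triangleright_{\alpha, \beta \gamma} (v \triangleright_{\beta, \gamma} a), \\
(u \triangleright_{\alpha, \beta} a) \triangleleft_{\alpha \beta, \gamma} v &= u \triangleright_{\alpha, \beta \gamma} (a \triangleleft_{\beta, \gamma} v), \\
(a \triangleleft_{\alpha, \beta} u) \triangleleft_{\alpha \beta, \gamma} v &= a \triangleleft_{\alpha, \beta \gamma} (u \ast_{\beta, \gamma} v).
\end{align*}
The verifications are all of the same flavor: expand both sides using the definitions of $\triangleright$, $\triangleleft$, $\ast$, apply the twisted $\mathcal{O}$-operator family identity (\ref{twisted-o-fam}) to every composite of the form $T_\alpha(\cdot) \cdot T_{\beta \gamma}(\cdot)$ that appears, and use the $A$-bimodule axioms on $M$ to collect.

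The key step to illustrate is the first identity. Expanding the LHS and using (\ref{twisted-o-fam}) to rewrite $T_{\alpha \beta}(u \ast_{\alpha, \beta} v) = T_\alpha(u) \cdot T_\beta(v)$ produces
\begin{align*}
T_\alpha(u) \cdot T_\beta(v) \cdot a \;-\; T_{\alpha \beta \gamma}\bigl(T_\alpha(u) \cdot v \cdot a + u \cdot T_\beta(v) \cdot a + H(T_\alpha(u), T_\beta(v)) \cdot a + H(T_\alpha(u) \cdot T_\beta(v),\, a)\bigr).
\end{align*}
On the RHS, one first expands $v \triangleright_{\beta, \gamma} a$ inside $u \triangleright_{\alpha, \beta \gamma}(\cdot)$; the resulting products $T_\alpha(u) \cdot T_{\beta \gamma}(v \cdot a)$ and $T_\alpha(u) \cdot T_{\beta \gamma}(H(T_\beta(v), a))$ are then opened up via (\ref{twisted-o-fam}). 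A large number of terms cancel in pairs, leaving
\begin{align*}
T_\alpha(u) \cdot T_\beta(v) \cdot a \;-\; T_{\alpha \beta \gamma}\bigl(T_\alpha(u) \cdot v \cdot a + u \cdot T_\beta(v) \cdot a + T_\alpha(u) \cdot H(T_\beta(v), a) + H(T_\alpha(u),\, T_\beta(v) \cdot a)\bigr).
\end{align*}
Equality of LHS and RHS is then precisely the Hochschild $2$-cocycle condition (\ref{2-co-id}) applied to the triple $(T_\alpha(u), T_\beta(v), a)$. The second and third identities are handled by entirely analogous expansions; in each case (\ref{twisted-o-fam}) converts the products of $T$'s into $T$-of-sums, and the residual $H$-terms match exactly via the cocycle condition.

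I expect the only real obstacle to be organizational: each side, after full expansion, produces on the order of a dozen nested terms, and one must be careful to track the bimodule axioms for $M$ (used to freely reassociate products like $T_\alpha(u) \cdot v \cdot a$) and to apply (\ref{twisted-o-fam}) in the right direction. No new conceptual ingredient beyond (\ref{twisted-o-fam}) and (\ref{2-co-id}) will be needed, so the proof is a direct if tedious verification.
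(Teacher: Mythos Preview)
Your proposal is correct and follows essentially the same approach as the paper: expand both sides using the definitions of $\triangleright$, $\triangleleft$, $\ast$, apply the twisted $\mathcal{O}$-operator identity (\ref{twisted-o-fam}) to products of the form $T_\alpha(\cdot)\cdot T_{\beta\gamma}(\cdot)$, cancel terms, and observe that the residual is $T_{\alpha\beta\gamma}$ applied to the Hochschild $2$-cocycle condition (\ref{2-co-id}) evaluated at the appropriate triple. The paper carries out all three computations in full rather than declaring the last two ``analogous,'' but the method and the key reductions are identical to what you describe.
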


\begin{proof}
For any $u, v \in M$, $a \in A$ and $\alpha, \beta, \gamma \in \Omega$, we have
\begin{align*}
&( u \ast_{\alpha, \beta} v) \triangleright_{\alpha \beta, \gamma} a ~-~ u \triangleright_{\alpha, \beta \gamma} (v \triangleright_{\beta, \gamma} a) \\
&= T_{\alpha \beta} (u \ast_{\alpha, \beta} v ) \cdot a - T_{\alpha \beta \gamma} ( (u \ast_{\alpha, \beta} v) \cdot a) - T_{\alpha \beta \gamma} \big(  H ( T_{\alpha \beta} (u \ast_{\alpha, \beta} v), a) \big)  \\
& \qquad \qquad \qquad - u \triangleright_{\alpha, \beta \gamma} \big(    T_\beta (v) \cdot a - T_{\beta \gamma} (v \cdot a) - T_{\beta \gamma} (H (T_\beta v, a ) )\big) \\
& = \cancel{T_\alpha (u) \cdot T_\beta (v) \cdot a} - T_{\alpha \beta \gamma} (T_\alpha (u) \cdot v \cdot a) - \cancel{T_{\alpha \beta \gamma} (u \cdot T_\beta (v) \cdot a)} - T_{\alpha \beta \gamma} \big( H(T_\alpha u, T_\beta v) \cdot a  \big) \\
& \quad - T_{\alpha \beta \gamma} \big(  H (T_\alpha (u) \cdot T_\beta (v), a )  \big) - \cancel{T_\alpha (u) \cdot T_\beta (v) \cdot a} + T_\alpha (u) \cdot T_{\beta \gamma} (v \cdot a) + T_\alpha (u) \cdot T_{\beta \gamma} (H (T_\beta v, a)) \\
 & \quad + \cancel{T_{\alpha \beta \gamma} (u \cdot T_\beta (v) \cdot a)} - T_{\alpha \beta \gamma} (u \cdot T_{\beta \gamma} (v \cdot a)) - T_{\alpha \beta \gamma} \big( u \cdot T_{\beta \gamma} (H (T_\beta v, a )) \big) \\
& \quad + T_{\alpha \beta \gamma} \big( H ( T_\alpha (u), T_\beta (v) \cdot a) \big) - T_{\alpha \beta \gamma} \big( H (T_\alpha u, T_{\beta \gamma} (v \cdot a) )  \big) - T_{\alpha \beta \gamma} \big(  H (T_\alpha u, T_{\beta \gamma} H (T_\beta v, a) )  \big) \\
&= T_{\alpha \beta \gamma} \bigg(  - T_\alpha (u) \cdot v \cdot a - H (T_\alpha u, T_\beta v) \cdot a - H (T_\alpha (u) \cdot T_\beta (v), a) \big) + T_\alpha (u) \cdot v \cdot a + u \cdot T_{\beta \gamma}(v \cdot a) \\
& \quad + H (T_\alpha u, T_{\beta \gamma} (v \cdot a)) + T_\alpha (u) \cdot H (T_\beta v, a) + u \cdot T_{\beta \gamma} H (T_\beta v, a) + H (T_\alpha u, T_{\beta \gamma} H (T_\beta v, a))  - u \cdot T_{\beta \gamma} (v \cdot a) \\
& \quad - u \cdot T_{\beta \gamma} H (T_\beta v, a) + H (T_\alpha u, T_\beta (v) \cdot a) - H (T_\alpha u, T_{\beta \gamma} (v \cdot a)) - H \big(  T_\alpha u, T_{\beta \gamma} H (T_\beta v, a) \big) \bigg) \\
&= T_{\alpha \beta \gamma} \big(  - H (T_\alpha u, T_\beta v) \cdot a - H (T_\alpha (u) \cdot T_\beta (v), a) + T_\alpha (u) \cdot H (T_\beta v, a) + H (T_\alpha u, T_\beta (v) \cdot a ) \big) \\
&= T_{\alpha \beta \gamma} \big( (\delta_{{\mathrm{Hoch}}} H) (T_\alpha u, T_\beta v, a)  \big) = 0.
\end{align*}
Similarly,
\begin{align*}
&(u \triangleright_{\alpha, \beta} a) \triangleleft_{\alpha \beta, \gamma} v - u \triangleright_{\alpha, \beta \gamma} (a \triangleleft_{\beta, \gamma} v) \\
&= \big(  T_\alpha (u) \cdot a - T_{\alpha \beta} (u \cdot a) - T_{\alpha \beta} H (T_\alpha u, a)  \big) \triangleleft_{\alpha \beta, \gamma} v - u \triangleright_{\alpha, \beta \gamma} \big(  a \cdot T_\gamma (v) - T_{\beta \gamma} (a \cdot v) - T_{\beta \gamma} H (a , T_\gamma (v) )  \big) \\
&= \cancel{T_\alpha (u) \cdot a \cdot T_\gamma (v)} - T_{\alpha \beta} (u \cdot a) \cdot T_\gamma (v) - T_{\alpha \beta} (H (T_\alpha u, a)) \cdot T_\gamma (v)  - T_{\alpha \beta \gamma} (T_\alpha (u) \cdot a \cdot v) \\
& \quad + T_{\alpha \beta \gamma} (T_{\alpha \beta} (u \cdot a) \cdot v) 
+ T_{\alpha \beta \gamma} (T_{\alpha \beta} H (T_\alpha u, a) \cdot v) - T_{\alpha \beta \gamma} H (T_\alpha (u) \cdot a, T_\gamma (v)) + T_{\alpha \beta \gamma} H (T_{\alpha \beta} (u \cdot a), T_\gamma v) \\
& \quad + T_{\alpha \beta \gamma} H \big(  T_{\alpha \beta} H (T_\alpha u, a), T_\gamma v \big) - \cancel{T_\alpha (u) \cdot a \cdot T_\gamma (v)} + T_\alpha (u) \cdot T_{\beta \gamma} (a \cdot v) + T_\alpha (u) \cdot T_{\beta \gamma} H (a, T_\gamma (v) ) \\
& \quad + T_{\alpha } (u \cdot a \cdot T_\gamma (v)) - T_{\alpha \beta \gamma} (u \cdot T_{\beta \gamma} (a \cdot v)) 
- T_{\alpha \beta \gamma} (u \cdot T_{\beta \gamma} H (a, T_\gamma (v))) + T_{\alpha \beta \gamma} H (T_\alpha u, a \cdot T_\gamma (v)) \\
& \quad - T_{\alpha \beta \gamma} H (T_\alpha u, T_{\beta \gamma} (a \cdot v)) - T_{\alpha \beta \gamma} H (T_\alpha u, T_{\beta \gamma} H (a, T_\gamma v)) \\
&= T_{\alpha \beta \gamma} \bigg(  - T_{\alpha \beta} (u \cdot a) \cdot v  - u \cdot a \cdot T_\gamma (v) - H (T_{\alpha \beta} (u \cdot a), T_\gamma (v)) - T_{\alpha \beta} H (T_\alpha u, a) \cdot v  - H (T_\alpha u, a) \cdot T_\gamma (v) \\
& \quad - H (T_{\alpha \beta} H (T_\alpha u, a), T_\gamma (v)) - T_\alpha (u) \cdot a \cdot v  + T_{\alpha \beta} (u \cdot a) \cdot v + T_{\alpha \beta} H (T_\alpha u, a) \cdot v - H (T_\alpha (u) \cdot a, T_\gamma (v) ) \\ 
& \quad + H (T_{\alpha \beta} (u \cdot a), T_\gamma v) + H (T_{\alpha \beta} H (T_\alpha u, a), T_\gamma v) + T_{\alpha} (u) \cdot a \cdot v + u \cdot T_{\beta \gamma} (a \cdot v) + H (T_\alpha u, T_{\beta \gamma} (a \cdot v)) \\
& \quad + T_\alpha (u) \cdot H (a, T_\gamma v) + u \cdot T_{\beta \gamma} H (a, T_\gamma (v)) + H (T_\alpha (u), T_{\beta \gamma} H (a, T_\gamma v)) + u \cdot a \cdot T_\gamma (v) - u \cdot T_{\beta \gamma} (a \cdot v) \\
& \quad - u \cdot T_{\beta \gamma} H (a, T_\gamma (v)) + H (T_\alpha u, a \cdot T_\gamma (v)) - H (T_\alpha (u), T_{\beta \gamma} (a \cdot v)) - H (T_\alpha u, T_{\beta \gamma} H (a, T_\gamma v))\bigg) \\
&= T_{\alpha \beta \gamma} \big(  - H (T_\alpha u, a) \cdot T_\gamma (v) - H (T_\alpha (u) \cdot a, T_\gamma (v)) + T_\alpha (u) \cdot H (a, T_\gamma v) + H (T_\alpha u, a \cdot T_\gamma (v)) \big) \\
&= T_{\alpha \beta \gamma } \big(  (\delta_\mathrm{Hoch} H)(T_\alpha u, a, T_\gamma v)  \big) = 0
\end{align*}
and
\begin{align*}
&(a \triangleleft_{\alpha, \beta} u ) \triangleleft_{\alpha \beta, \gamma} v - a \triangleleft_{\alpha, \beta \gamma} (u \ast_{\beta, \gamma} v) \\
&=  \big( a \cdot T_\beta (u) - T_{\alpha \beta} (a \cdot u) - T_{\alpha \beta} H (a, T_\beta u)   \big) \triangleleft_{\alpha \beta, \gamma} v \\
&\qquad \qquad \qquad - a \cdot T_{\beta \gamma} (u \ast_{\beta, \gamma} v)  + T_{\alpha \beta \gamma} (a \cdot (u \ast_{\beta, \gamma} v)) + T_{\alpha \beta \gamma} H (a, T_{\beta \gamma} (u \ast_{\beta, \gamma} v)) \\
&= \cancel{a \cdot T_\beta (u) \cdot T_\gamma (v)} - T_{\alpha \beta} (a \cdot u) \cdot T_\gamma (v) - T_{\alpha \beta} ( H (a, T_\beta u)) \cdot T_\gamma (v) - \cancel{T_{\alpha \beta \gamma} (a \cdot T_\beta (u) \cdot v)}  \\
& \quad + T_{\alpha \beta \gamma} (T_{\alpha \beta} (a \cdot u) \cdot v) + T_{\alpha \beta \gamma} \big(  T_{\alpha \beta} ( H (a, T_\beta v)) \cdot v  \big) - T_{\alpha \beta \gamma} H (a \cdot T_\beta (u), T_\gamma (v))  + T_{\alpha \beta \gamma} H (T_{\alpha \beta} (a \cdot u), T_\gamma (v)) \\
& \quad + T_{\alpha \beta \gamma} H (T_{\alpha \beta} H (a, T_\beta u), T_\gamma (v)) - \cancel{a \cdot T_\beta (u) \cdot T_\gamma (v)} + \cancel{T_{\alpha \beta \gamma} (a \cdot T_\beta (u) \cdot v)}  \\
& \quad + T_{\alpha \beta \gamma} (a \cdot u \cdot T_\gamma (v)) + T_{\alpha \beta \gamma} (a \cdot H (T_\beta u, T_\gamma v)) + T_{\alpha \beta \gamma} H (a, T_\beta (u) \cdot T_\gamma (v)) \\
&= T_{\alpha \beta \gamma} \bigg(  - T_{\alpha \beta} (a \cdot u) \cdot v - a \cdot u \cdot T_\gamma (v) - H (T_{\alpha \beta} (a \cdot u), T_\gamma (v)) - T_{\alpha \beta} (H (a, T_\beta u)) \cdot v - H (a, T_\beta u) \cdot T_\gamma (v) \\
& \quad - H (T_{\alpha \beta} H (a, T_\beta u), T_\gamma (v)) + T_{\alpha \beta} (a \cdot u ) \cdot v + T_{\alpha \beta } (H (a, T_\beta u)) \cdot v - H (a \cdot T_\beta (u) , T_\gamma (v)) \\ 
& \quad + H (T_{\alpha \beta} (a \cdot u), T_\gamma (v)) 
+ H (T_{\alpha \beta} H (a, T_\beta u), T_\gamma v) + a \cdot u \cdot T_\gamma (v) + a \cdot H (T_\beta u, T_\gamma v) + H (a , T_\beta (u) \cdot T_\gamma (v) ) \bigg) \\
&= T_{\alpha \beta \gamma} \big(  - H (a, T_\beta u) \cdot T_\gamma (v) - H (a \cdot T_\beta (u), T_\gamma (v))  + a \cdot H (T_\beta u, T_\gamma v) + H (a , T_\beta (u) \cdot T_\gamma (v) ) \big) \\
&= T \big(  (\delta_{\mathrm{Hoch}} H) (a, T_\beta u, T_\gamma v) \big) = 0.
\end{align*}
This completes the proof.
\end{proof}

It follows that one can consider the cochain complex of the $\Omega$-associative algebra $(M, \{ ~ \ast_{\alpha, \beta} \}_{\alpha, \beta \in \Omega})$ with coefficients in the bimodule given in Theorem \ref{bim-thm}. More precisely, we define
\begin{align*}
&C^0_{\mathrm{TwOoperf}} (M,A) = A,\\
&C^{n \geq 1}_{\mathrm{TwOoperf}} (M,A) = \big\{  f = \{ f_{\alpha_1, \ldots, \alpha_n} \}_{\alpha_1, \ldots, \alpha_n \in \Omega} ~|~ f_{\alpha_1, \ldots, \alpha_n} : M^{\otimes n} \rightarrow A \text{ is multilinear} \big\}
\end{align*}
and a differential $\delta_{\mathrm{TwOoperf}} : C^n_{\mathrm{TwOoperf}} (M,A) \rightarrow C^{n+1}_{\mathrm{TwOoperf}}$ by
\begin{align}
\big( \delta_{\mathrm{TwOoperf}} (a) \big)_\alpha (u) := T_\alpha (u) \cdot a - T_\alpha (u \cdot a) - T_\alpha (H (T_\alpha u, a)) - a \cdot T_\alpha (u) + T_\alpha (a \cdot u) + T_\alpha (H (a, T_\alpha u)),
\end{align}
\begin{align}\label{tw-diff-form}
&\big( \delta_{\mathrm{TwOoperf}} (f) \big)_{\alpha_1, \ldots, \alpha_{n+1}} (u_1, \ldots, u_{n+1}) := T_{\alpha_1} (u_1) \cdot f_{\alpha_2, \ldots, \alpha_{n+1}} (u_2, \ldots, u_{n+1})  \\
& ~~- T_{\alpha_1 \cdots \alpha_{n+1}} \big( u_1 \cdot f_{\alpha_2, \ldots, \alpha_{n+1}} (u_2, \ldots, u_{n+1})  \big) - T_{\alpha_1 \cdots \alpha_{n+1}} \big( H (T_{\alpha_1} (u_1), f_{\alpha_2, \ldots, \alpha_{n+1}} (u_2, \ldots, u_{n+1}) )  \big) \nonumber \\
&~~ + \sum_{i=1}^n (-1)^i f_{\alpha_1, \ldots, \alpha_i \alpha_{i+1}, \ldots, \alpha_{n+1}} \big( u_1, \ldots, T_{\alpha_i} (u_i) \cdot u_{i+1} + u_i \cdot T_{\alpha_{i+1}} (u_{i+1}) + H (T_{\alpha_i} u_i, T_{\alpha_{i+1}} u_{i+1}), \ldots, u_{n+1} \big) \nonumber \\
&~~+ (-1)^{n+1} ~ f_{\alpha_1, \ldots, \alpha_n} (u_1, \ldots, u_n) \cdot T_{\alpha_{n+1}} (u_{n+1}) - (-1)^{n+1} T_{\alpha_1 \cdots \alpha_{n+1}} \big(  f_{\alpha_1, \ldots, \alpha_n} (u_1, \ldots, u_n) \cdot u_{n+1} \big) \nonumber \\
&~~- (-1)^{n+1} T_{\alpha_1 \cdots \alpha_{n+1}} \big( H ( f_{\alpha_1, \ldots, \alpha_n} (u_1, \ldots, u_n), T_{\alpha_{n+1}} (u_{n+1}) ) \big),  \nonumber 
\end{align}
for $a \in A = C^0_\mathrm{TwOoperf} (M,A)$ and $f \in C^{n \geq 1}_\mathrm{TwOoperf} (M,A)$.
Let $Z^n_{\mathrm{TwOoperf}} (M,A)$ be the space of $n$-cocycles and $B^n_{\mathrm{TwOoperf}} (M,A)$ be the space of $n$-coboundaries, for $n \geq 0$. The quotient groups
\begin{align*}
H^n_{\mathrm{TwOoperf}} (M, A) := \frac{ Z^n_{\mathrm{TwOoperf}} (M,A) }{ B^n_{\mathrm{TwOoperf}} (M,A) }, ~ \text{ for } n \geq 0
\end{align*}
are called the {\bf cohomology} of the $H$-twisted $\mathcal{O}$-operator family $\{ T_\alpha \}_{\alpha \in \Omega}$.

\begin{remark}
(The case $H = 0$) Let $\{ T_\alpha : M \rightarrow A \}_{\alpha \in \Omega}$ be an $\mathcal{O}$-operator family. Then $(M, \{~ \ast_{\alpha, \beta } \}_{\alpha , \beta \in \Omega} )$ is an $\Omega$-associative algebra (see \cite{agu}), where
\begin{align*}
u \ast_{\alpha, \beta} v := T_\alpha (u) \cdot v + u \cdot T_\beta (v), ~ \text{ for } u, v \in M \text{ and } \alpha, \beta \in \Omega.
\end{align*}
Moreover, $(A, \{ \triangleright_{\alpha, \beta}, \triangleleft_{\alpha, \beta} \}_{\alpha, \beta \in \Omega} )$ is a bimodule over $(M, \{~ \ast_{\alpha, \beta } \}_{\alpha , \beta \in \Omega} )$, where
\begin{align*}
u \triangleright_{\alpha, \beta} a := T_\alpha (u) \cdot a - T_{\alpha \beta} (u \cdot a) ~ \text{ and } ~ a \triangleleft_{\alpha, \beta} u := a \cdot T_\beta (u) - T_{\alpha \beta} (a \cdot u), \text{ for } u \in M, a \in A.
\end{align*}
The corresponding cohomology groups are called the cohomology of the $\mathcal{O}$-operator family $\{ T_\alpha \}_{\alpha \in \Omega}$.
\end{remark}


\medskip

\noindent $\square$ {\bf Deformation theory of $H$-twisted $\mathcal{O}$-operator family.}

\medskip

Here we consider formal deformations of an $H$-twisted $\mathcal{O}$-operator family. We show that the cohomology of $H$-twisted $\mathcal{O}$-operator family introduced above govern such deformations.

Let $A$ be an associative algebra, $M$ be an $A$-bimodule and $H: A^{\otimes 2} \rightarrow M$ be a $2$-cocycle. Consider the space $A[[t]]$ of formal power series in $t$ with coefficients from $A$. Then the associative multiplication on $A$ induces an associative multiplication on $A[[t]]$ using ${\bf k}[[t]]$-bilinearity. Further, the space $M[[t]]$ is an $A[[t]]$-bimodule. The $2$-cocycle $H$ can also be extended to a $2$-cocycle (denoted by the same notation) $H : A[[t]]^{\otimes 2} \rightarrow M[[t]]$ using ${\bf k}[[t]]$-bilinearity.

\begin{defn}
Let $T = \{ T_\alpha : M \rightarrow A \}_{\alpha \in \Omega}$ be an $H$-twisted $\mathcal{O}$-operator family. A {\bf formal one-parameter deformation} of $T$ consists of a formal sum
\begin{align*}
T^t := \sum_{i=0}^\infty~ t^i~ T^{(i)} , \text{ where } T^{(i)} = \{ T^{(i)}_\alpha : M \rightarrow A \}_{\alpha \in \Omega} \text{ with } T^{(0)} = T
\end{align*}
such that the ${\bf k}[[t]]$-linear map $T^{(t)} : M[[t]] \rightarrow A[[t]]$ is an $H$-twisted $\mathcal{O}$-operator family. That is,
\begin{align*}
T^t_\alpha (u) \cdot T^t_\beta (v) = T^t_{\alpha \beta} \big( T^t_\alpha (u) \cdot v + u \cdot T^t_\beta (v) + H (T^t_\alpha (u), T^t_\alpha (v) )   \big), \text{ for } u, v \in M \text{ and } \alpha, \beta \in \Omega.
\end{align*}
\end{defn}

It follows that $T^t$ is a formal one-parameter deformation of $T$ if 
\begin{align}\label{defor-eqn}
\sum_{i+j=n} T^{(i)}_\alpha (u) \cdot T^{(j)}_\beta (v) = \sum_{i+j = n} T_{\alpha \beta}^{(i)} \big(  T_\alpha^{(j)} (u) \cdot v + u \cdot T_\beta^{(j)}(v)  \big) + \sum_{i+j+k = n} T_{\alpha \beta}^{(i)} ~\big(  H ( T_\alpha^{(j)} (u), T_\beta^{(k)} (v) ) \big)
\end{align}
holds, for $n = 0, 1, \ldots .$ These system of equations are called deformation equations. Note that (\ref{defor-eqn}) holds for $n=0$ as $T^{(0)} = T$ is an $H$-twisted $\mathcal{O}$-operator family. For $n=1$, we obtain
\begin{align*}
T^{(1)}_\alpha (u) \cdot T_\beta (v) + T_\alpha (u) \cdot T^{(1)}_\beta (v) =~& T^{(1)}_{\alpha \beta} \big( T_\alpha (u) \cdot v + u \cdot T_\beta (v) + H (T_\alpha (u), T_\beta (v) )  \big) \\
+& T_{\alpha \beta} \big(  T^{(1)}_\alpha (u) \cdot v + u \cdot T^{(1)}_\beta (v) + H (   T^{(1)}_\alpha (u), T_\beta (v))  + H ( T_\alpha (u), T_\beta^{(1)}(v))  \big).
\end{align*}
It follows from (\ref{tw-diff-form}) that the above identity is equivalent to $\big( \delta_\mathrm{TwOoperf} (T^{(1)}) \big)_{\alpha, \beta} (u, v) = 0$, for all $u, v \in M$ and $\alpha, \beta \in \Omega$. In other words, $T^{(1)} = \{ T^{(1)}_\alpha \}_{\alpha \in \Omega} \in C^1_\mathrm{TwOoperf} (M,A)$ is a $1$-cocycle in the cochain complex of the $H$-twisted $\mathcal{O}$-operator family $T = \{ T_\alpha \}_{\alpha \in \Omega}$. The $1$-cocycle $T^{(1)}$ is called the {\bf infinitesimal} of the deformation $T^{t}$.

\begin{defn}
Two formal one-parameter deformations $T_t = \sum_{i=0}^\infty t^i ~ T^{(i)}$ and $\overline{T}_t = \sum_{i=0}^\infty t^i~ \overline{T}^{(i)}$ of the $H$-twisted $\mathcal{O}$-operator family $T= \{ T_\alpha \}_{\alpha \in \Omega}$ are said to be {\bf equivalent} if there exists an element $\theta \in A$, linear maps $\phi_i \in \mathrm{Hom}(A,A)$ and $\psi_i \in \mathrm{Hom}(M,M)$, for $i \geq 2$, such that the maps
\begin{align*}
&\phi^t = \mathrm{id}_A + t (l^\mathrm{ad}_\theta - r^\mathrm{ad}_\theta) + \sum_{i=2}^\infty t^i \phi_i : ~A[[t]] \rightarrow A[[t]] \\
& \big\{ \psi^t_\alpha = \mathrm{id}_M + t (l_\theta - r_\theta + H (\theta, T_\alpha -) - H (T_\alpha -, \theta) ) + \sum_{i=2}^\infty t^i \psi^i  : ~ M[[t]] \rightarrow M[[t]]  \big\}_{\alpha \in \Omega}
\end{align*}
satisfy $\phi^t \circ T^t_\alpha = \overline{T}^t_\alpha \circ \psi^t_\alpha$, for all $\alpha \in \Omega$.
\end{defn}



By equating coefficients of $t$ in both sides of $\phi^t \circ T^t_\alpha (u)= \overline{T}^t_\alpha \circ \psi^t_\alpha (u)$, we get
\begin{align*}
T_\alpha^{(1)} (u) + \theta \cdot T_\alpha (u) - T_\alpha (u) \cdot \theta  = \overline{T}_\alpha (u) + T_\alpha \big(   \theta \cdot u - u \cdot \theta + H (\theta, T_\alpha u) - H (T_\alpha u, \theta) \big).
\end{align*}
This is equivalent to
\begin{align*}
\big( T^{(1)}   - \overline{T}^{(1)} \big)_\alpha (u) =  \big( \delta_\mathrm{TwOoperf} (\theta) \big)_\alpha (u).
\end{align*}

As a consequence, we get the following.

\begin{thm}\label{inf-1-co}
Let $T^t = \sum_{i=0}^\infty t^i T^{(i)}$ be a formal one-parameter deformation of the $H$-twisted $\mathcal{O}$-operator family $T = \{ T_\alpha \}_{\alpha \in \Omega}$. Then the infinitesimal $T^{(1)}$ is a $1$-cocycle in the cochain complex of $T = \{ T_\alpha \}_{\alpha \in \Omega}$. Moreover, the corresponding cohomology class depends only on the equivalence class of the deformation $T^t$.
\end{thm}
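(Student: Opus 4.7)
The plan is that both assertions essentially fall out of computations that have already been staged in the text immediately preceding the statement; my proof will just collect them and verify that the cochain interpretation is correct.

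For the first assertion, I will extract the coefficient of $t^1$ in the deformation equation (\ref{defor-eqn}). Since $T^{(0)} = T$ is already an $H$-twisted $\mathcal{O}$-operator family, the degree-zero part vanishes, and the coefficient of $t$ reads
\begin{align*}
T^{(1)}_\alpha(u) \cdot T_\beta(v) + T_\alpha(u) \cdot T^{(1)}_\beta(v) =~& T^{(1)}_{\alpha\beta}\bigl( T_\alpha(u)\cdot v + u \cdot T_\beta(v) + H(T_\alpha u, T_\beta v)\bigr)\\
&+ T_{\alpha\beta}\bigl( T^{(1)}_\alpha(u)\cdot v + u \cdot T^{(1)}_\beta(v) + H(T^{(1)}_\alpha u, T_\beta v) + H(T_\alpha u, T^{(1)}_\beta v)\bigr).
\end{align*}
I will then compare this with the explicit formula (\ref{tw-diff-form}) for $\delta_{\mathrm{TwOoperf}}$ applied to an element of $C^1_{\mathrm{TwOoperf}}(M,A)$, taking $n=1$. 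The six terms match line by line after a sign check, giving $(\delta_{\mathrm{TwOoperf}}(T^{(1)}))_{\alpha,\beta}(u,v) = 0$ for all $u,v,\alpha,\beta$, so $T^{(1)} \in Z^1_{\mathrm{TwOoperf}}(M,A)$.

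For the second assertion, I will start from two equivalent deformations $T^t$ and $\overline{T}^t$ and expand the equivalence relation $\phi^t \circ T^t_\alpha = \overline{T}^t_\alpha \circ \psi^t_\alpha$ in powers of $t$. The coefficient of $t^0$ simply reproduces $T_\alpha = T_\alpha$. The coefficient of $t^1$, after cancelling common terms, yields
\begin{align*}
T^{(1)}_\alpha(u) - \overline{T}^{(1)}_\alpha(u) = T_\alpha\bigl(\theta \cdot u - u \cdot \theta + H(\theta, T_\alpha u) - H(T_\alpha u, \theta)\bigr) - \theta \cdot T_\alpha(u) + T_\alpha(u) \cdot \theta,
\end{align*}
which is precisely $-(\delta_{\mathrm{TwOoperf}}(\theta))_\alpha(u)$ up to sign (to be fixed by inspecting the explicit degree-zero formula for $\delta_{\mathrm{TwOoperf}}$). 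Hence $T^{(1)} - \overline{T}^{(1)} \in B^1_{\mathrm{TwOoperf}}(M,A)$, so $[T^{(1)}] = [\overline{T}^{(1)}]$ in $H^1_{\mathrm{TwOoperf}}(M,A)$.

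I do not expect a genuine obstacle here; the only delicate point is making sure the ${\bf k}[[t]]$-linear map $T^t$ is being unpacked correctly and that each symbolic term in the $t$-expansion of $\phi^t \circ T^t_\alpha(u) = \overline{T}^t_\alpha \circ \psi^t_\alpha(u)$ gets assigned to the correct summand of the coboundary formula. Consequently, the proof will be brief: one display to verify the cocycle condition at order $t$, one display to identify the coboundary at order $t$, and a concluding sentence that the class $[T^{(1)}] \in H^1_{\mathrm{TwOoperf}}(M,A)$ depends only on the equivalence class of $T^t$.
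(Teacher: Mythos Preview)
Your proposal is correct and follows exactly the same approach as the paper: the paper likewise extracts the $t^1$-coefficient of the deformation equation to identify $T^{(1)}$ as a $1$-cocycle via (\ref{tw-diff-form}), and then compares $t^1$-coefficients in $\phi^t \circ T^t_\alpha = \overline{T}^t_\alpha \circ \psi^t_\alpha$ to obtain $(T^{(1)} - \overline{T}^{(1)})_\alpha = (\delta_{\mathrm{TwOoperf}}(\theta))_\alpha$. Your sign hedge is unnecessary --- inspecting the degree-zero formula for $\delta_{\mathrm{TwOoperf}}$ shows the right-hand side of your displayed equation is exactly $+(\delta_{\mathrm{TwOoperf}}(\theta))_\alpha(u)$, with no sign adjustment needed.
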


One may also consider rigidity of an $H$-twisted $\mathcal{O}$-operator family. An $H$-twisted $\mathcal{O}$-operator family $T = \{ T_\alpha \}_{\alpha \in \Omega}$ is said to be {\bf rigid} if any formal one-parameter deformation $T^t = \sum_{i=0}^\infty t^i T^{(i)}$ of $T$ is equivalent to the undeformed one $\overline{T}^t = T$.



\begin{thm}
Let $T = \{ T_\alpha \}_{\alpha \in \Omega}$ be an $H$-twisted $\mathcal{O}$-operator family. If $H^1_\mathrm{TwOoperf} (M,A) = 0$ then $\{ T_\alpha \}_{\alpha \in \Omega}$ is rigid.
\end{thm}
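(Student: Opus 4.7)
The plan is to prove rigidity by an iterative procedure: repeatedly use the vanishing of $H^1_\mathrm{TwOoperf}(M,A)$ to kill the lowest-order nontrivial term of the deformation, then pass to the formal limit.

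First, given any formal one-parameter deformation $T^t = \sum_{i \geq 0} t^i T^{(i)}$ of $T$, Theorem \ref{inf-1-co} tells us that the infinitesimal $T^{(1)}$ is a $1$-cocycle in the complex $C^\bullet_\mathrm{TwOoperf}(M,A)$. Since $H^1_\mathrm{TwOoperf}(M,A) = 0$, there exists $\theta \in A = C^0_\mathrm{TwOoperf}(M,A)$ such that $T^{(1)} = \delta_\mathrm{TwOoperf}(\theta)$. Using this $\theta$, I would form the equivalence maps
\begin{align*}
\phi^t &= \mathrm{id}_A + t(l^\mathrm{ad}_\theta - r^\mathrm{ad}_\theta), \\
\psi^t_\alpha &= \mathrm{id}_M + t\bigl(l_\theta - r_\theta + H(\theta, T_\alpha -) - H(T_\alpha -, \theta)\bigr),
\end{align*}
which are invertible over $\mathbf{k}[[t]]$, and define $\overline{T}^t_\alpha := \phi^t \circ T^t_\alpha \circ (\psi^t_\alpha)^{-1}$. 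This is a new formal deformation equivalent to $T^t$, and a direct computation of the coefficient of $t$ (the same computation that appears in the proof of Theorem \ref{inf-1-co}, read backwards) shows that its infinitesimal is $\overline{T}^{(1)} = T^{(1)} - \delta_\mathrm{TwOoperf}(\theta) = 0$.

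Next, I would proceed inductively. Suppose we have arranged an equivalent deformation of the form $\widetilde{T}^t = T + t^n \widetilde{T}^{(n)} + t^{n+1} \widetilde{T}^{(n+1)} + \cdots$ for some $n \geq 2$. Extracting the coefficient of $t^n$ in the deformation equation \eqref{defor-eqn}, all intermediate terms vanish since $\widetilde{T}^{(i)} = 0$ for $1 \leq i \leq n-1$, and what remains is exactly the equation $(\delta_\mathrm{TwOoperf} \widetilde{T}^{(n)})_{\alpha,\beta}(u,v) = 0$. Hence $\widetilde{T}^{(n)}$ is a $1$-cocycle, so by the hypothesis there is $\theta_n \in A$ with $\widetilde{T}^{(n)} = \delta_\mathrm{TwOoperf}(\theta_n)$. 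Applying the analogous equivalence with $t$ replaced by $t^n$, namely
\begin{align*}
\phi^t_n = \mathrm{id}_A + t^n(l^\mathrm{ad}_{\theta_n} - r^\mathrm{ad}_{\theta_n}), \qquad \psi^t_{n,\alpha} = \mathrm{id}_M + t^n\bigl(l_{\theta_n} - r_{\theta_n} + H(\theta_n, T_\alpha-) - H(T_\alpha -, \theta_n)\bigr),
\end{align*}
produces an equivalent deformation starting from order $t^{n+1}$ without disturbing any lower orders.

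Composing all of these equivalences in the $t$-adic topology yields a well-defined formal equivalence between the original $T^t$ and the trivial deformation $T$, so $T$ is rigid. The main obstacle is the key computation in the inductive step: verifying that the leading coefficient of a deformation starting at order $n$ is genuinely a $1$-cocycle with respect to $\delta_\mathrm{TwOoperf}$. This amounts to checking that the order-$t^n$ terms of the two sides of \eqref{defor-eqn} reproduce the six-term formula \eqref{tw-diff-form} for $\delta_\mathrm{TwOoperf}(\widetilde{T}^{(n)})$; this follows by the same bookkeeping used in deriving the $n=1$ case, but one must be careful to track the cubic contribution $H(T^{(i)}_\alpha u, T^{(j)}_\beta v)$ coming from the twist.
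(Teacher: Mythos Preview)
Your proposal is correct and follows essentially the same approach as the paper: construct the equivalence $\phi^t,\psi^t_\alpha$ from a $\theta$ with $T^{(1)}=\delta_\mathrm{TwOoperf}(\theta)$, check that the coefficient of $t$ in $\overline{T}^t_\alpha=\phi^t\circ T^t_\alpha\circ(\psi^t_\alpha)^{-1}$ vanishes, and then iterate. In fact you spell out the inductive step (showing the order-$t^n$ term is again a $1$-cocycle and using a $t^n$-equivalence) more explicitly than the paper, which simply writes ``By repeating this process.''
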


\begin{proof}
Let $T^t = \sum_{i=0}^\infty t^i T^{(i)}$ be any formal deformation of $T$. Then by Theorem \ref{inf-1-co}, the infinitesimal $T^{(1)} \in Z^1_\mathrm{TwOoperf} (M,A)$ is a $1$-cocycle. Hence by the assumption, there exists $\theta \in A = C^0_\mathrm{TwOoperf}(M,A)$ such that $T^{(1)} = \delta_\mathrm{TwOoperf} (\theta).$ We set
\begin{align*}
\phi^t := \mathrm{id}_A + t (l^\mathrm{ad}_\theta - r^\mathrm{ad}_\theta) ~ \text{ and } ~ \psi^t_\alpha := \mathrm{id}_M + t \big(  l_\theta - r_\theta + H (\theta, T_\alpha -) - H (T_\alpha - , \theta)  \big), \text{ for } \alpha \in \Omega 
\end{align*}
and define $\overline{T}^t_\alpha = \phi_t \circ T^t_\alpha \circ (\psi^t_\alpha)^{-1}$. Then $T^t$ is equivalent to the deformation $\overline{T}^t$. Moreover, we observe that
\begin{align*}
&\overline{T}^t_\alpha (u) \quad (\mathrm{mod } t^2) \\
&= \big(    \mathrm{id}_A + t (l^\mathrm{ad}_\theta - r^\mathrm{ad}_\theta)\big) \circ \big(  T_\alpha + t T^{(1)}_\alpha \big)  \big(   u - t \big( \theta \cdot u - u \cdot \theta + H (\theta, T_\alpha u) - H (T_\alpha u, \theta)   \big) \big)  \quad (\mathrm{mod } t^2)  \\
&= T_\alpha (u) + t ~ \big( T_\alpha^{(1)} u - T_\alpha \big(       \theta \cdot u - u \cdot \theta + H (\theta, T_\alpha u) - H (T_\alpha u, \theta)  \big) + \theta \cdot T_\alpha (u) - T_\alpha (u) \cdot \theta    \big) \\
&= T_\alpha (u) ~~~~ (\text{as } T^{(1)} = \delta_\mathrm{TwOoperf} (\theta)\big).
\end{align*}
This shows that the coefficient of $t$ in the formal expression of $\overline{T}^t$ vanishes. By repeating this process, we can show that the deformation $T^t$ is equivalent to the undeformed one.
\end{proof}

\subsection{Cohomology and deformations of NS-family algebras}
In this subsection, we define the cohomology of NS-family algebras. In particular, we get the cohomology of dendriform family algebras. Some remarks about deformations of these family algebras are also made. Throughout this subsection, $\Omega$ is a semigroup (not necessarily with the unit).

Let $(D, \{ \prec_\alpha, \succ_\alpha, \curlyvee_{\alpha, \beta} \}_{\alpha, \beta \in \Omega})$ be a NS-family algebra. To define the cohomology of this NS-family algebra, we need some notations. For each $n \geq 1$, we define a set $C_n$ as follows
\begin{align*}
C_1 = \{ [1] \} ~~ \text{ and } ~~ C_{n \geq 2}  = \{ [1], [2], \ldots, [n+1] \}.
\end{align*}
In other words, $C_1$ is the set consisting of the natural number $1$ and $C_{n \geq 2}$ is the set consisting of the first $n+1$ natural numbers (all putted inside the square brackets). For any $m , n \geq 1$ and $1 \leq i \leq m$, we define maps
\begin{align*}
R_{m;i, n} : C_{m+n-1} \setminus \{ [m+n] \} \rightarrow C_m ~~~ \text{ and } ~~~ S_{m;i,n} : C_{m+n-1} \setminus \{ [m+n] \} \rightarrow {\bf k}[C_n]
\end{align*} 
by
\begin{align*}
R_{m;i, n} ([r]) = \begin{cases}
[r] ~& \text{ if } 1 \leq r \leq i-1 \\
[i] ~& \text{ if } i \leq r \leq i+n-1 \\
[r-n+1] ~& \text{ if } i+n \leq r \leq m+n-1,
\end{cases}\\
S_{m;i, n} ([r]) = \begin{cases}
\sum_{[s] \in C_n} [s]  ~& \text{ if } 1 \leq r \leq i-1 \\
[r-i+1] ~& \text{ if } i \leq r \leq i+n-1 \\
\sum_{[s] \in C_n} [s]  ~& \text{ if } i+n \leq r \leq m+n-1.
\end{cases}
\end{align*}
We now consider the space $\mathrm{Hom}_{\Omega^{\times n}} ({\bf k}[C_n] \otimes D^{\otimes n}, D)$ whose elements are tuples $f = (f^{[1]}, \ldots, f^{[n+1]})$ when $n \geq 2$ (and $f = f^{[1]}$ when $n=1$), where each $f^{[i]}$ is a collection 
\begin{align*}
f^{[i]} = \{ f^{[i]}_{\alpha_1, \ldots, \alpha_n} : D^{\otimes n} \rightarrow D \}_{\alpha_1, \ldots, \alpha_n \in \Omega}
\end{align*}
of $n$-ary multilinear maps on $D$ labelled by the elements of $\Omega^{\times n}$. For each $n \geq 1$, we define the $n$-th cochain group $C^n_{\mathrm{NSfam}} (D,D)$ by
\begin{align*}
C^n_\mathrm{NSfam} (D,D) = \bigg\{  f \in \mathrm{Hom}_{\Omega^{\times n}} ({\bf k}[C_n] \otimes D^{\otimes n}, D) ~ \bigg| ~ f^{[i]} \text{ doesnot depend on } \alpha_i, \text{ for } [i] \in C_n \setminus \{ [n+1] \} \bigg\}.
\end{align*}

\begin{remark}\label{fam-pi}
Note that the NS-family algebra structure $(D, \{ \prec_\alpha , \succ_\alpha, \curlyvee_{\alpha, \beta} \}_{\alpha, \beta \in \Omega})$ induces an element $\pi = (\pi^{[1]}, \pi^{[2]}, \pi^{[3]}) \in C^2_\mathrm{NSfam}(D,D)$ given by
\begin{align*}
\pi^{[1]} =~& \{ \pi^{[1]}_{\alpha, \beta} := \prec_\beta~ : D^{\otimes 2} \rightarrow D \}_{\alpha, \beta \in \Omega}, \\
\pi^{[2]} =~& \{ \pi^{[2]}_{\alpha, \beta} := \succ_\alpha~ : D^{\otimes 2} \rightarrow D \}_{\alpha, \beta \in \Omega}, \\
\pi^{[3]} =~& \{ \pi_{\alpha, \beta}^{[3]} := \curlyvee_{\alpha, \beta} ~: D^{\otimes 2} \rightarrow D \}_{\alpha, \beta \in \Omega}.
\end{align*}
The element $\pi$ will be crucial to define the coboundary operator of the NS-family algebra.
\end{remark}

We define a map $\delta_\mathrm{NSfam} : C^n_\mathrm{NSfam} (D,D) \rightarrow C^{n+1}_\mathrm{NSfam}(D,D)$ by
\begin{align*}
&\big(  \delta_\mathrm{NSfam} (f) \big)^{[r]}_{\alpha_1, \ldots, \alpha_{n+1}} (x_1, \ldots, x_{n+1}) \\
&:= \pi^{R_{2;1,n} ([r])}_{\alpha_1, \alpha_2 \cdots \alpha_{n+1}} \big( x_1, ~f^{S_{2;1,n} ([r])}_{\alpha_2, \ldots, \alpha_{n+1}} (x_2, \ldots, x_{n+1}) \big) \\
& \quad +\sum_{i=1}^n (-1)^i ~ f^{R_{n;i,2} ([r])}_{\alpha_1, \ldots, \alpha_i \alpha_{i+1}, \ldots, \alpha_{n+1}} \big( x_1, \ldots, x_{i-1}, \pi^{S_{n;i,2} ([r])}_{\alpha_i, \alpha_{i+1}} (x_i, x_{i+1}), x_{i+2}, \ldots, x_{n+1}  \big) \\
& \quad + (-1)^{n+1} \pi^{R_{2;n,1} ([r])}_{\alpha_1 \cdots \alpha_n, \alpha_{n+1}} \big(  f^{S_{2;n,1} ([r])}_{\alpha_1, \ldots, \alpha_n} (x_1, \ldots, x_n), x_{n+1}  \big) \qquad (\text{when } [r] \in C_{n+1} \setminus \{ [n+2]\})
\end{align*}
and
\begin{align*}
&\big(  \delta_\mathrm{NSfam} (f) \big)^{[n+2]}_{\alpha_1, \ldots, \alpha_{n+1}} (x_1, \ldots, x_{n+1}) \\
&:= \pi^{[2]}_{\alpha_1, \alpha_2 \cdots \alpha_{n+1}} \big( x_1, ~f^{[n+1]}_{\alpha_2, \ldots, \alpha_{n+1}} (x_2, \ldots, x_{n+1}) \big) ~+~ \pi^{[3]}_{\alpha_1, \alpha_2 \cdots \alpha_{n+1}} \big( x_1, ~f^{[1]+ \cdots +[n+1]}_{\alpha_2, \ldots, \alpha_{n+1}} (x_2, \ldots, x_{n+1}) \big)\\
&\quad +\sum_{i=1}^n (-1)^i ~ f^{[i]}_{\alpha_1, \ldots, \alpha_i \alpha_{i+1}, \ldots, \alpha_{n+1}} \big( x_1, \ldots, x_{i-1}, \pi^{[3]}_{\alpha_i, \alpha_{i+1}} (x_i, x_{i+1}), x_{i+2}, \ldots, x_{n+1}  \big) \\
& \quad +\sum_{i=1}^n (-1)^i ~ f^{[n+1]}_{\alpha_1, \ldots, \alpha_i \alpha_{i+1}, \ldots, \alpha_{n+1}} \big( x_1, \ldots, x_{i-1}, \pi^{[1]+[2]+[3]}_{\alpha_i, \alpha_{i+1}} (x_i, x_{i+1}), x_{i+2}, \ldots, x_{n+1}  \big) \\
& \quad + (-1)^{n+1}~ \big\{ \pi^{[1]}_{\alpha_1 \cdots \alpha_n, \alpha_{n+1}} \big(  f^{[n+1]}_{\alpha_1, \ldots, \alpha_n} (x_1, \ldots, x_n), x_{n+1}  \big)  + \pi^{[3]}_{\alpha_1 \cdots \alpha_n, \alpha_{n+1}} \big(  f^{[1]+ \cdots +[n+1]}_{\alpha_1, \ldots, \alpha_n} (x_1, \ldots, x_n), x_{n+1}  \big) \big\},
\end{align*}
for $f \in C^n_\mathrm{NSfam} (D,D)$, $\alpha_1, \ldots, \alpha_{n+1} \in \Omega$ and $x_1, \ldots, x_{n+1} \in D$. Then similar to the coboundary operator for ordinary NS-algebras \cite{An}, one can show that $(\delta_\mathrm{NSfam})^2 = 0$. In other words, $\{ C^{n \geq 1}_\mathrm{NSfam} (D,D), \delta_\mathrm{NSfam} \}$ is a cochain complex. The corresponding cohomology groups are called the cohomology of the NS-family algebra $(D, \{ \prec_\alpha, \succ_\alpha, \curlyvee_{\alpha, \beta} \}_{\alpha, \beta \in \Omega})$, and they are denoted by $H^{n \geq 1}_\mathrm{NSfam}(D,D)$.

\medskip

\noindent $\square$ {\bf Deformation theory of NS-family algebras.}

\medskip

Let $(D, \{ \prec_\alpha, \succ_\alpha, \curlyvee_{\alpha, \beta} \}_{\alpha, \beta \in \Omega})$ be an NS-family algebra. Consider the element $\pi = (\pi^{[1]}, \pi^{[2]}, \pi^{[3]}) \in C^2_\mathrm{NSfam}(D,D)$ defined in Remark \ref{fam-pi}. A formal one-parameter deformation of this NS-family algebra consist of formal sums
\begin{align}
\prec_\alpha^t ~=~ \prec^0_\alpha +~& t \prec^1_\alpha +~ t^2 \prec^2_\alpha + \cdots  , \qquad  \succ_\alpha^t ~=~ \succ^0_\alpha +~ t \succ^1_\alpha +~ t^2 \succ^2_\alpha + \cdots , \label{form-nsns}\\
&\curlyvee_{\alpha, \beta}^t =~ \curlyvee^0_{\alpha, \beta} +~ t \curlyvee^1_{\alpha, \beta} +~ t^2 \curlyvee^2_{\alpha,\beta} + \cdots , \text{ for } \alpha, \beta \in \Omega \label{form-nsnsns}
\end{align}
with $\prec_\alpha^0 = \prec_\alpha$, $\succ_\alpha^0 = \succ_\alpha$ and $\curlyvee_{\alpha, \beta}^0 = \curlyvee_{\alpha, \beta}$ that makes $(D[[t]], \{ \prec_\alpha^t, \succ_\alpha^t, \curlyvee_{\alpha, \beta}^t \}_{\alpha, \beta \in \Omega})$ into an NS-family algebra (over the base ring ${\bf k}[[t]]$.)

The existence of the formal sums (\ref{form-nsns}), (\ref{form-nsnsns}) is equivalent to having a formal sum
\begin{align*}
\pi^t = \pi^0 +~ t \pi^1 + t^2 \pi^2 + \cdots \in C^2_\mathrm{NSfam}(D,D)[[t]] \text{ with } \pi^0 = \pi.
\end{align*}
By comparing coefficients of various powers of $t$ in the NS-family identities for $(D[[t]], \{ \prec_\alpha^t, \succ_\alpha^t, \curlyvee_{\alpha, \beta}^t \}_{\alpha, \beta \in \Omega})$, we get the deformation equations. We may also interpret these deformation equations in terms of the elements $\pi^i \in C^2_\mathrm{NSfam}(D,D)$, for $i \geq 0$. Comparing coefficients of $t$, we simply get
\begin{align*}
\delta_\mathrm{NSfam} (\pi^1) = 0.
\end{align*}
The element $\pi^1$ is called the infinitesimal of the formal one-parameter deformation.

Two formal one-parameter deformations $\{ \prec_\alpha^t, \succ_\alpha^t, \curlyvee_{\alpha, \beta}^t \}_{\alpha, \beta \in \Omega}$ and $\{ \overline{\prec}_\alpha^t, \overline{\succ}_\alpha^t, \overline{\curlyvee}_{\alpha, \beta}^t \}_{\alpha, \beta \in \Omega}$ are said to be equivalent if there exist linear maps $\psi^i : D \rightarrow D$ for $i \geq 1$ such that the formal sum
\begin{align*}
\psi^t = \mathrm{id}_D + t \psi^1 + t^2 \psi^2 + \cdots  ~: D[[t]] \rightarrow D[[t]]
\end{align*}
defines a morphism of NS-family algebras from $(D[[t]], \{ \prec_\alpha^t, \succ_\alpha^t, \curlyvee_{\alpha, \beta}^t \}_{\alpha, \beta \in \Omega})$ to $(D[[t]], \{ \overline{\prec}_\alpha^t, \overline{\succ}_\alpha^t, \overline{\curlyvee}_{\alpha, \beta}^t \}_{\alpha, \beta \in \Omega})$. The morphism condition for $\psi^t$ also leads to a system of equations by comparing various powers of $t$. In particular, by comparing coefficients of $t$, we get  (using the elements $\pi^i$ and $\overline{\pi}^i$ corresponding to two deformations)
\begin{align*}
\pi^1 - \overline{\pi}^1 = \delta_\mathrm{NSfam}(\psi^1).
\end{align*}

As a summary, we get the following.

\begin{thm}\label{final-thmm}
Let $D = (D, \{ \prec_\alpha, \succ_\alpha , \curlyvee_{\alpha, \beta} \}_{\alpha, \beta \in \Omega})$ be an NS-family algebra. Then the infinitesimal in any formal deformation is a $2$-cocycle in the cohomology complex of $D$. Moreover, the corresponding cohomology class depends only on the equivalence class of the deformation.
\end{thm}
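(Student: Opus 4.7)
The plan is to establish both assertions by the standard Gerstenhaber-style method: substitute the formal sums into the defining relations and compare coefficients of $t$, mirroring the discussion already sketched in the preceding paragraphs.

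For the infinitesimal cocycle property, I would substitute (\ref{form-nsns}) and (\ref{form-nsnsns}) into each of the four NS-family axioms (\ref{ns-fam1})--(\ref{ns-fam4}) for the deformed structure on $D[[t]]$. The $t^0$ coefficient of each axiom recovers the corresponding identity for the initial structure $(D, \{\prec_\alpha, \succ_\alpha, \curlyvee_{\alpha,\beta}\}_{\alpha,\beta\in\Omega})$, which holds by assumption. The $t^1$ coefficient of each axiom yields a trilinear identity involving $\pi^1 = (\pi^{1,[1]}, \pi^{1,[2]}, \pi^{1,[3]})$ and the original structure $\pi$; in each axiom the contributions are of three types, corresponding to placing a single factor of $\pi^1$ at the outer-left, inner, or outer-right position. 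Comparing with the definition of $\delta_\mathrm{NSfam}$ identifies these four identities with the four component equations $(\delta_\mathrm{NSfam}(\pi^1))^{[r]} = 0$ for $r = 1, 2, 3, 4$. The dictionary between axioms and components is dictated by the maps $R_{m;i,n}$ and $S_{m;i,n}$: axioms (\ref{ns-fam1}), (\ref{ns-fam2}), (\ref{ns-fam3}) correspond to $[r] = [1], [2], [3]$ respectively, while the ``combining'' axiom (\ref{ns-fam4}), in which the total product $\ast_{\alpha,\beta} = \prec_\beta + \succ_\alpha + \curlyvee_{\alpha,\beta}$ appears at certain slots, matches the special formula for the $[n+2] = [4]$ component, which encodes exactly the prescribed substitution $[1]+[2]+[3]$.

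For the equivalence claim, let $\psi^t = \mathrm{id}_D + t\psi^1 + t^2 \psi^2 + \cdots$ realize an equivalence between two deformations $\{\prec_\alpha^t, \succ_\alpha^t, \curlyvee_{\alpha,\beta}^t\}_{\alpha,\beta\in\Omega}$ and $\{\overline{\prec}_\alpha^t, \overline{\succ}_\alpha^t, \overline{\curlyvee}_{\alpha,\beta}^t\}_{\alpha,\beta\in\Omega}$. Writing out the three morphism conditions $\psi^t(x \prec_\alpha^t y) = \psi^t(x) \,\overline{\prec}^t_\alpha\, \psi^t(y)$, $\psi^t(x \succ_\alpha^t y) = \psi^t(x) \,\overline{\succ}^t_\alpha\, \psi^t(y)$, and $\psi^t(x \curlyvee_{\alpha,\beta}^t y) = \psi^t(x) \,\overline{\curlyvee}^t_{\alpha,\beta}\, \psi^t(y)$, and comparing the coefficients of $t^1$ on both sides yields, for $i = 1, 2, 3$, the identity
\[
\pi^{1,[i]}(x, y) - \overline{\pi}^{1,[i]}(x, y) = \pi^{[i]}(\psi^1 x, y) + \pi^{[i]}(x, \psi^1 y) - \psi^1 \pi^{[i]}(x, y),
\]
whose right-hand side is precisely $(\delta_\mathrm{NSfam}(\psi^1))^{[i]}(x, y)$ as read off from the coboundary formula at $n = 1$, with the convention that the absent component $f^{[2]}$ in the $[n+2]$ formula is treated as zero so that $f^{[1]+[2]} = \psi^1$ and the resulting $[3]$ component reproduces the expected derivation-like condition for $\curlyvee$. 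Hence $\pi^1 - \overline{\pi}^1 = \delta_\mathrm{NSfam}(\psi^1)$, so the cohomology class $[\pi^1] \in H^2_\mathrm{NSfam}(D, D)$ depends only on the equivalence class of the deformation.

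The principal difficulty is purely combinatorial, lying in the bookkeeping of the component labels $[r] \in C_n$, the index maps $R_{m;i,n}$ and $S_{m;i,n}$, and the exceptional form of the $[n+2]$ component of $\delta_\mathrm{NSfam}$. Once these labels are correctly tracked, both verifications reduce to term-by-term matching; no estimates or new structural ideas beyond those already used to establish $(\delta_\mathrm{NSfam})^2 = 0$ are needed.
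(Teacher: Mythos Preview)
Your proposal is correct and follows essentially the same approach as the paper, which states the theorem as a summary of the preceding discussion: the paper simply records that comparing $t$-coefficients in the deformed NS-family axioms gives $\delta_\mathrm{NSfam}(\pi^1)=0$, and comparing $t$-coefficients in the morphism condition gives $\pi^1-\overline{\pi}^1=\delta_\mathrm{NSfam}(\psi^1)$. Your write-up expands this sketch with the explicit bookkeeping of the component labels $[r]$ and the index maps $R_{m;i,n}$, $S_{m;i,n}$, which the paper omits; in particular your handling of the $[3]$-component of $\delta_\mathrm{NSfam}(\psi^1)$ via the convention $f^{[2]}=0$ (so that the exceptional $[n+2]$-formula collapses to the expected derivation condition on $\curlyvee$) is the right way to reconcile the $n=1$ case with the general coboundary formula.
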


\begin{remark} ({\bf Cohomology of dendriform family algebras})
Let $(D, \{ \prec_\alpha, \succ_\alpha \}_{\alpha \in \Omega})$ be a dendriform family algebra. It can be considered as an NS-family algebra $(D, \{ \prec_\alpha, \succ_\alpha , \curlyvee_{\alpha, \beta} \}_{\alpha, \beta \in \Omega})$ in which $\curlyvee_{\alpha, \beta} = 0$ for all $\alpha, \beta \in \Omega$. Hence one can define a cochain complex $\{ C^{n \geq 1}_{\mathrm{NSfam}} (D,D), \delta_\mathrm{NSfam} \}$ as above. There is a subcomplex $$C^{n \geq 1}_{\mathrm{Dendfam}} (D,D) \hookrightarrow C^{n \geq 1}_{\mathrm{NSfam}} (D,D) ~~\text{ given by }$$
\begin{align*}
C^1_\mathrm{Dendfam}(D,D) = C^1_\mathrm{NSfam}(D,D) ~~ \text{ and } ~~ C^{n \geq 2}_\mathrm{Dendfam} (D,D) = \{ f \in C^n_\mathrm{NSfam}(D,D) ~|~ f^{[n+1]} = 0 \}.
\end{align*}
It can be easily verified that the differential $\delta_\mathrm{NSfam}$ restricts to a differential (which we denote by $\delta_\mathrm{Dendfam}$) on $C^{n \geq 1}_{\mathrm{Dendfam}} (D,D)$. In other words, $\{ C^{n \geq 1}_{\mathrm{Dendfam}} (D,D), \delta_\mathrm{Dendfam} \}$ is a cochain complex. The cohomology of this cochain complex is called the cohomology of the dendriform family algebra $(D, \{ \prec_\alpha, \succ_\alpha \}_{\alpha \in \Omega})$. When $\Omega$ is singleton (i.e. the dendriform family algebra is nothing but an ordinary dendriform algebra), this cohomology coincides with the cohomology of ordinary dendriform algebra \cite{lod-val-book} (see also \cite{A4}).

One may also define formal deformations of a dendriform family algebra and equivalences of formal deformations. In particular, Theorem \ref{final-thmm} follows in the context of dendriform family algebras.
\end{remark}

\section{Appendix}
Many results of the present paper can be dualized into coassociative coalgebras. In this appendix, we only present the dual versions of the results of Section \ref{sec-3}. 

\medskip

\noindent $\square$ {\bf Twisted $\mathcal{O}$-operator cofamilies.}

\medskip

Let $(C, \triangle)$ be a coassociative coalgebra, i.e., $C$ is a vector space and $\triangle : C \rightarrow C \otimes C$ is a linear map satisfying the coassociativity $(\triangle \otimes \mathrm{id}) \circ \triangle = (\mathrm{id} \otimes \triangle) \circ \triangle$. A $C$-cobimodule is a vector space $N$ together with linear maps $\triangle^l : N \rightarrow C \otimes N$ and $\triangle^r : N \rightarrow N \otimes C$ (called the left and right $C$-coactions, respectively) that satisfies
\begin{align*}
(\triangle \otimes \id) \circ \triangle^{\ell} = (\id \otimes \triangle^{\ell}) \circ \triangle^{\ell},~~(\triangle^{\ell} \otimes \id) \circ \triangle^{r} = (\id \otimes \triangle^{r}) \circ \triangle^{\ell}~~\text{and}~~(\triangle^{r} \otimes \id) \circ \triangle^{r} = (\id \otimes \triangle) \circ \triangle^{r}.
\end{align*}
It follows that the coassociative coalgebra $(C, \triangle)$ is a $C$-cobimodule with $\triangle^l = \triangle^r = \triangle.$

Let $(C, \triangle)$ be a coassociative coalgebra and $N = (N, \triangle^l , \triangle^r)$ be a $C$-cobimodule. The corresponding cohomology theory (called the coHochschild cohomology theory, also known as the Cartier cohomology theory) is dual to the standard Hochschild cohomology. A linear map $h : N \rightarrow C \otimes C$ is called a coHochschild $2$-cocycle if
\begin{align*}
(\mathrm{id} \otimes h) \circ \triangle^l ~-~ (\triangle \otimes \mathrm{id}) \circ h ~+~ (\mathrm{id} \otimes \triangle) \circ h ~-~ (h \otimes \mathrm{id}) \circ \triangle^r = 0.
\end{align*}

With the above notations, we can define the following which is dual to Definition \ref{defn-tttt}.

\begin{defn}
An {\bf $h$-twisted $\mathcal{O}$-operator cofamily} is a collection $\{ S_\alpha : C \rightarrow N \}_{\alpha \in \Omega}$ of linear maps satisfying for $\alpha , \beta \in \Omega$,
\begin{align*}
(S_\alpha \otimes S_\beta) \circ \triangle = \big(  (S_\alpha \otimes \mathrm{id}) \circ \triangle^l ~+~ (\mathrm{id} \otimes S_\beta) \circ \triangle^r ~+~ (S_\alpha \otimes S_\beta) \circ h \big) \circ S_{\alpha \beta}.
\end{align*}
\end{defn}
When $h = 0$, we call it an $\mathcal{O}$-operator family. Moreover, if $N = C$, we call it a Rota-Baxter cofamily.

Let $(C, \triangle)$ be a coassociative coalgebra. Then the dual vector space $C^{dual} = \mathrm{Hom}(C, {\bf k})$ carries an associative algebra structure with the multiplication given by
\begin{align*}
C^{dual} \otimes C^{dual} \xrightarrow{I_C} (C \otimes C)^{dual} \xrightarrow{\triangle^{dual}} C^{dual},
\end{align*}
where $I_C : C^{dual} \otimes C^{dual} \rightarrow (C \otimes C)^{dual}$ is the map $I_C (f \otimes g)(c \otimes d) =f(c) ~g(d)$, for $f \otimes g \in C^{dual} \otimes C^{dual}$ and $c \otimes d \in C \otimes C$. Moreover, if $N = (N, \triangle^l, \triangle^r)$ is a $C$-cobimodule then the dual space $N^{dual}$ can be equipped with a $C^{dual}$-bimodule structure. Further, if $h : N \rightarrow C \otimes C$ is a coHochschild $2$-cocycle then 
\begin{align*}
H : C^{dual} \otimes C^{dual} \xrightarrow{I_C} (C \otimes C)^{dual} \xrightarrow{h^{dual}} N^{dual}
\end{align*}
is a Hochschild $2$-cocycle. With these notations, we have the following.

\begin{prop}
Let $\{ S_\alpha : C \rightarrow N \}_{\alpha \in \Omega}$ be an $h$-twisted $\mathcal{O}$-operator cofamily. Then the collection 
\begin{center}
$\{ S^{dual}_\alpha : N^{dual} \rightarrow C^{dual}, S^{dual}_\alpha (f) (c) = f (S_\alpha (c)) \}_{\alpha \in \Omega}$ is an $H$-twisted $\mathcal{O}$-operator family.
\end{center}
\end{prop}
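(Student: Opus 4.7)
The plan is to verify the $H$-twisted $\mathcal{O}$-operator family identity for $\{S^{dual}_\alpha\}_{\alpha \in \Omega}$ by evaluating both sides on an arbitrary element $c \in C$ and recognizing the resulting equality as the pairing of the hypothesised cofamily identity with $f \otimes g \in N^{dual} \otimes N^{dual}$.

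First, I would fix Sweedler-style notation $\triangle(c) = c_{(1)} \otimes c_{(2)}$, $\triangle^\ell(u) = u_{[-1]} \otimes u_{[0]}$, $\triangle^r(u) = u_{[0]} \otimes u_{[1]}$ and $h(u) = h(u)^{(1)} \otimes h(u)^{(2)}$ (suppressing summation as in Section 2). Unraveling the definitions of $\triangle^{dual} \circ I_C$, of the induced $C^{dual}$-bimodule structure on $N^{dual}$, and of $H = h^{dual} \circ I_C$ yields the explicit formulas
\[
(\phi \cdot \psi)(c) = \phi(c_{(1)})\, \psi(c_{(2)}), \quad (\phi \cdot f)(u) = \phi(u_{[-1]})\, f(u_{[0]}), \quad (f \cdot \phi)(u) = f(u_{[0]})\, \phi(u_{[1]}),
\]
and $H(\phi, \psi)(u) = \phi(h(u)^{(1)})\, \psi(h(u)^{(2)})$, for $\phi, \psi \in C^{dual}$, $f \in N^{dual}$, $c \in C$, $u \in N$.

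Next, I would fix $f, g \in N^{dual}$, $\alpha, \beta \in \Omega$, $c \in C$, and compute the left-hand side of the family identity at $c$:
\[
\big(S^{dual}_\alpha(f) \cdot S^{dual}_\beta(g)\big)(c) = f\big(S_\alpha(c_{(1)})\big)\, g\big(S_\beta(c_{(2)})\big) = (f \otimes g)\big((S_\alpha \otimes S_\beta)\triangle(c)\big),
\]
where $f \otimes g$ is viewed as a linear functional on $N \otimes N$ via $(f \otimes g)(u \otimes v) = f(u) g(v)$. Setting $u := S_{\alpha \beta}(c) \in N$, the three summands inside $S^{dual}_{\alpha \beta}$ on the right-hand side, after evaluation at $c$, unravel to
\[
(f \otimes g)\big((S_\alpha \otimes \mathrm{id})\triangle^\ell(u)\big), \quad (f \otimes g)\big((\mathrm{id} \otimes S_\beta)\triangle^r(u)\big), \quad (f \otimes g)\big((S_\alpha \otimes S_\beta)h(u)\big),
\]
using the formulas above. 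Their sum equals $(f \otimes g)$ applied to $\big((S_\alpha \otimes \mathrm{id}) \circ \triangle^\ell + (\mathrm{id} \otimes S_\beta) \circ \triangle^r + (S_\alpha \otimes S_\beta) \circ h\big)(u)$, which by the defining identity of an $h$-twisted $\mathcal{O}$-operator cofamily is exactly $(f \otimes g)\big((S_\alpha \otimes S_\beta)\triangle(c)\big)$, matching the left-hand side.

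Since $c \in C$ and $f, g \in N^{dual}$ were arbitrary, the identity
$S^{dual}_\alpha(f) \cdot S^{dual}_\beta(g) = S^{dual}_{\alpha \beta}\big(S^{dual}_\alpha(f) \cdot g + f \cdot S^{dual}_\beta(g) + H(S^{dual}_\alpha(f), S^{dual}_\beta(g))\big)$
holds in $C^{dual}$. The argument is pure duality bookkeeping with no substantive obstacle; the only care required is tracking which tensor slot pairs with $f$ versus $g$, which Sweedler notation handles cleanly.
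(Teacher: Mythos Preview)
Your proposal is correct and follows essentially the same approach as the paper: evaluate both sides of the family identity at an arbitrary $c \in C$, expand via Sweedler notation, and recognize the equality as the pairing of the cofamily identity with $f \otimes g$. The only cosmetic difference is that you spell out the explicit formulas for the dual multiplication, bimodule actions, and $H$ up front, whereas the paper folds these into the chain of equalities; the underlying argument is identical.
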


\begin{proof}
For any $f, g \in N^{dual}$, $\alpha , \beta \in \Omega$ and $c \in N$, we have
\begin{align*}
\big( S_\alpha^{dual} (f) \cdot S_\beta^{dual} (g) \big) (c) =~& \sum S_\alpha^{dual} (f) (c_{(1)}) ~ S_\beta^{dual} (g) (c_{(2)}) \qquad (\text{assuming }~ \triangle (c) = \sum c_{(1)} \otimes c_{(2)}) \\
=~& \sum f \big( S_\alpha (c_{(1)}) \big) ~ g \big( S_\beta (c_{(2)})  \big) \\
=~& \sum I_N (f \otimes g) \big(  S_\alpha (c_{(1)}) \otimes   S_\beta (c_{(2)}) \big) \\
=~& \sum I_N (f \otimes g) \big(  (S_\alpha \otimes \mathrm{id}) \circ \triangle^l ~+~ (\mathrm{id} \otimes S_\beta) \circ \triangle^r ~+~ (S_\alpha \otimes S_\beta) \circ h   \big) (S_{\alpha \beta} (c)) \\
=~& \big(  S_\alpha^{dual} (f) \cdot g ~+~ f \cdot S_\beta^{dual}(g) ~+~ H (S_\alpha^{dual}(f), S_\beta^{dual} (g))  \big) 
 (S_{\alpha \beta} (c)) \\
 =~& S^{dual} \big(  S_\alpha^{dual} (f) \cdot g ~+~ f \cdot S_\beta^{dual}(g) ~+~ H (S_\alpha^{dual}(f), S_\beta^{dual} (g))  \big) (c).
\end{align*}
This proves the result.
\end{proof}

\medskip

\noindent $\square$ {\bf NS-cofamily coalgebras.}


\begin{defn}
An {\bf NS-cofamily coalgebra} is a pair $(C, \{ \triangle_{\prec_\alpha}, \triangle_{\succ_\alpha}, \triangle_{\curlyvee_{\alpha, \beta}} \}_{\alpha, \beta \in \Omega})$ consisting of a vector space $C$ together with a collection $\{ \triangle_{\prec_\alpha}, \triangle_{\succ_\alpha}, \triangle_{\curlyvee_{\alpha, \beta}} \}_{\alpha, \beta \in \Omega}$ of linear maps satisfying for $\alpha, \beta, \gamma \in \Omega$,
\begin{align}
(\triangle_{\prec_\alpha} \otimes \id) \circ \triangle_{\prec_{\beta}}                                    &= ( \id \otimes (\triangle_{\succ_\alpha} + \triangle_{\prec_\beta}  +   \triangle_{\curlyvee_{\alpha, \beta}}) ) \circ \triangle_{\prec_{\alpha \beta}},\\
(\triangle_{\succ_{\alpha}} \otimes \id) \circ \triangle_{\prec_{\beta}}                                  &= (\id \otimes \triangle_{\prec_{\beta}}) \circ \triangle_{\succ_{\alpha}},\\
((\triangle_{\succ_\alpha} + \triangle_{\prec_\beta} + \triangle_{\curlyvee_{\alpha, \beta}}) \otimes \id) \circ \triangle_{\succ_{\alpha \beta}} &= (\id \otimes \triangle_{\succ_\beta}) \circ \triangle_{\succ_{\alpha}} ,\\
((\triangle_{\succ_\alpha} + \triangle_{\prec_\beta} + \triangle_{\curlyvee_{\alpha, \beta}}) &\otimes \id) \circ \triangle_{\curlyvee_{\alpha \beta, \gamma}} ~+~ (\triangle_{\curlyvee_{\alpha, \beta}} \otimes \mathrm{id}) \circ \triangle_{\prec_\gamma} \\
&= (\mathrm{id} \otimes \triangle_{\curlyvee_{\beta, \gamma}}) \circ \triangle_{\succ_\alpha} ~+~ (\mathrm{id} \otimes (  \triangle_{\succ_\beta} + \triangle_{\prec_\gamma} + \triangle_{\curlyvee_{\beta, \gamma}}  ) ) \circ \triangle_{\curlyvee_{\alpha, \beta \gamma}}. \nonumber
\end{align}
\end{defn}

When $\triangle_{\curlyvee_{\alpha, \beta}} = 0$ for all $\alpha, \beta \in \Omega$, then the pair $(C, \{ \triangle_{\prec_\alpha}, \triangle_{\succ_\alpha})$ is called a dendriform-cofamily coalgebra.


The proof of the following result is dual to the proof of Proposition \ref{twisted-rota-ns}. Hence we will not repeat it here.

\begin{prop}
Let $C$ be a coassociative coalgebra, $N = (N, \triangle^l, \triangle^r)$ be a $C$-cobimodule and $h: N \rightarrow C \otimes C$ be a coHochschild $2$-cocycle. Suppose $\{ S_\alpha : C \rightarrow N \}_{\alpha \in \Omega}$ be an $h$-twisted $\mathcal{O}$-operator cofamily. Then the pair $(N, \{ \triangle_{\prec_\alpha}, \triangle_{\succ_\alpha}, \triangle_{\curlyvee_{\alpha, \beta}} \}_{\alpha, \beta \in \Omega})$ is a NS-cofamily coalgebra, where
\begin{align*}
\triangle_{\prec_\alpha} = (\mathrm{id} \otimes S_\alpha) \circ \triangle^r, ~~~~~ 
\triangle_{\succ_\alpha} = ( S_\alpha \otimes \mathrm{id}) \circ \triangle^l ~~ \text{ and } ~~ \triangle_{\curlyvee_{\alpha, \beta}} = (S_\alpha \otimes S_\beta) \circ h, ~\text{for } \alpha, \beta \in \Omega. 
\end{align*}
In particular, if $\{ S_\alpha : C \rightarrow N \}_{\alpha \in \Omega}$ is an $\mathcal{O}$-operator cofamily then $(N, \{ \triangle_{\prec_\alpha}, \triangle_{\succ_\alpha} \}_{\alpha \in \Omega})$ is a dendriform-cofamily coalgebra, where
\begin{align*}
\triangle_{\prec_\alpha} = (\mathrm{id} \otimes S_\alpha) \circ \triangle^r ~~ \text{ and } ~~
\triangle_{\succ_\alpha} = ( S_\alpha \otimes \mathrm{id}) \circ \triangle^l, ~\text{for } \alpha \in \Omega. 
\end{align*}
\end{prop}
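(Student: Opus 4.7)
The plan is to dualize the proof of Proposition \ref{twisted-rota-ns} by verifying each of the four defining identities of an NS-cofamily coalgebra through a direct computation on elements. Writing $\triangle(c) = c_{(1)} \otimes c_{(2)}$ and $\triangle^l(u) = u_{(-1)} \otimes u_{(0)}$, $\triangle^r(u) = u_{(0)} \otimes u_{(1)}$ in Sweedler-type notation, the three tools I will need are the cobimodule axioms for $N$ (in particular the compatibility $(\triangle^l \otimes \id) \circ \triangle^r = (\id \otimes \triangle^r) \circ \triangle^l$ together with the two coassociativities), the defining relation of the $h$-twisted $\mathcal{O}$-operator cofamily, and the coHochschild $2$-cocycle identity for $h$.

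For the first identity I would expand the LHS using the definition of $\triangle_{\prec_\alpha}$ twice, apply right-coassociativity $(\triangle^r \otimes \id) \circ \triangle^r = (\id \otimes \triangle) \circ \triangle^r$ to bring the expression into the form $(\id \otimes (S_\alpha \otimes S_\beta) \circ \triangle) \circ \triangle^r$, and then replace $(S_\alpha \otimes S_\beta) \circ \triangle$ by its expression coming from the twisted cofamily relation. The three resulting summands are exactly $\id \otimes \triangle_{\succ_\alpha}$, $\id \otimes \triangle_{\prec_\beta}$ and $\id \otimes \triangle_{\curlyvee_{\alpha,\beta}}$ precomposed with $\triangle_{\prec_{\alpha\beta}}$, producing the RHS. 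The second identity is a direct consequence of the cobimodule compatibility $(\triangle^l \otimes \id) \circ \triangle^r = (\id \otimes \triangle^r) \circ \triangle^l$ and uses neither the cofamily relation nor the $2$-cocycle condition. The third identity is the mirror image of the first, using left-coassociativity $(\triangle \otimes \id) \circ \triangle^l = (\id \otimes \triangle^l) \circ \triangle^l$ and inserting the twisted cofamily relation on the opposite side.

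The main obstacle will be the fourth identity. Here I will compute the difference of the two sides and rearrange the eight resulting terms so that four of them assemble into the coHochschild coboundary
\begin{align*}
(\id \otimes h) \circ \triangle^l \;-\; (\triangle \otimes \id) \circ h \;+\; (\id \otimes \triangle) \circ h \;-\; (h \otimes \id) \circ \triangle^r,
\end{align*}
composed with $S_\alpha \otimes S_\beta \otimes S_\gamma$ and evaluated after $S_{\alpha\beta\gamma}$; this vanishes because $h$ is a coHochschild $2$-cocycle. The remaining four terms cancel pairwise after one further use of the twisted cofamily relation together with coassociativity of $\triangle$ and the cobimodule compatibility, mirroring the cancellation carried out in the proof of Proposition \ref{twisted-rota-ns}. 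The ``in particular'' clause is then immediate: when $h = 0$, the operation $\triangle_{\curlyvee_{\alpha,\beta}}$ is identically zero, the fourth NS-cofamily identity becomes vacuous, and the remaining three identities reduce to the defining axioms of a dendriform-cofamily coalgebra.
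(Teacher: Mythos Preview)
Your overall plan is exactly what the paper does: it simply says ``dual to the proof of Proposition \ref{twisted-rota-ns}'' and gives no further details, so a direct verification of the four NS-cofamily axioms is the intended route. Your sketches of the first three identities are correct.

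Your description of the fourth identity, however, is inaccurate in two places. First, there is no precomposition with $S_{\alpha\beta\gamma}$: the NS-cofamily axioms are identities of maps $N \to N^{\otimes 3}$, and the coHochschild coboundary of $h$ is already a map $N \to C^{\otimes 3}$, so only the postcomposition with $S_\alpha \otimes S_\beta \otimes S_\gamma$ is needed. Second, the term accounting is not ``four assemble into the coboundary and four cancel pairwise.'' If you expand the eight summands, two of them (namely $(\triangle_{\curlyvee_{\alpha,\beta}} \otimes \id)\circ\triangle_{\prec_\gamma}$ and $(\id\otimes\triangle_{\curlyvee_{\beta,\gamma}})\circ\triangle_{\succ_\alpha}$) are directly $(S_\alpha\otimes S_\beta\otimes S_\gamma)\circ(h\otimes\id)\circ\triangle^r$ and $(S_\alpha\otimes S_\beta\otimes S_\gamma)\circ(\id\otimes h)\circ\triangle^l$. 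The remaining six summands come in two groups of three, and each group collapses \emph{via the twisted cofamily relation} (not via separate cancellations) to a single term: the three summands $(\triangle_{\succ_\alpha}+\triangle_{\prec_\beta}+\triangle_{\curlyvee_{\alpha,\beta}})\otimes\id$ applied to $\triangle_{\curlyvee_{\alpha\beta,\gamma}}$ become $(S_\alpha\otimes S_\beta\otimes S_\gamma)\circ(\triangle\otimes\id)\circ h$, and symmetrically on the other side one gets $(S_\alpha\otimes S_\beta\otimes S_\gamma)\circ(\id\otimes\triangle)\circ h$. The resulting four terms are then exactly $(S_\alpha\otimes S_\beta\otimes S_\gamma)$ applied to the coHochschild coboundary of $h$, which vanishes. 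There are no leftover terms to cancel.
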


\begin{remark}
In Section \ref{sec-5}, we defined cohomology theories for twisted $\mathcal{O}$-operator family and NS-family algebra. By dualizing such cohomologies, one can easily define cohomologies for twisted $\mathcal{O}$-operator cofamily and NS-cofamily coalgebra. Such cohomologies govern respective deformations.
\end{remark}

\noindent {\bf Acknowledgements.} The author would like to thank Dr Sourav Sen for some useful discussions and support regarding this work. He also wishes to thank Indian Institute of Technology (IIT) Kharagpur for providing the beautiful academic atmosphere where the research has been carried out.

\end{document}